\DeclareSymbolFont{cyrletters}{OT2}{wncyr}{m}{n}
\numberwithin{equation}{section} \numberwithin{figure}{section}
\DeclareMathOperator{\Gal}{Gal}
\DeclareMathOperator{\Hom}{Hom} \DeclareMathOperator{\re}{Re}
\DeclareMathOperator{\im}{Im}
\DeclareMathOperator{\Res}{R}
\DeclareMathOperator{\HH}{H}
\DeclareMathOperator{\spl}{spl}
\DeclareMathOperator{\Frob}{Frob}
\newcommand{\places}{\Omega_k}
\DeclareMathSymbol{\Sha}{\mathalpha}{cyrletters}{"58}
\newcommand{\OO}{\mathcal{O}}
\let \O \relax
\newcommand{\O}{\mathcal{O}}
\newcommand{\dual}[1]{{#1}^{\wedge}}
\newcommand\FF{\mathbb{F}}
\newcommand\ZZ{\mathbb{Z}}
\newcommand\QQ{\mathbb{Q}}
\newcommand\RR{\mathbb{R}}
\newcommand\CC{\mathbb{C}}
\newcommand\Z{\mathbb{Z}}
\newcommand\Q{\mathbb{Q}}
\newcommand\GG{\mathbb{G}}
\newcommand\Gm{\GG_\mathrm{m}}
\newcommand{\Adele}{\mathbf{A}}
\newcommand{\Idele}{{\mathbf{A}^{*}}}
\newcommand{\pair}[2]{\ensuremath{\langle #1, #2 \rangle}}
\newtheorem{lemma}{Lemma}
\newtheorem{theorem}[lemma]{Theorem}
\newtheorem{proposition}[lemma]{Proposition}
\theoremstyle{definition}
\newtheorem{example}[lemma]{Example}
\newtheorem{remark}[lemma]{Remark}
\newtheorem*{ack}{Acknowledgements}
\begin{document}

\title{The Hasse Norm principle for abelian extensions -- corrigendum}

\author{\sc Christopher Frei}
\address{Christopher Frei\\
Graz University of Technology\\
Institute of Analysis and Number Theory\\
Steyrergasse 30/II\\
8010 Graz\\
Austria.}
\email{frei@math.tugraz.at}
\urladdr{https://www.math.tugraz.at/~frei/}

\author{Daniel Loughran}
 \address{Daniel Loughran \\
	Department of Mathematical Sciences\\
	University of Bath\\
	Claverton Down\\
	Bath\\
	BA2 7AY\\
	UK.}
\urladdr{https://sites.google.com/site/danielloughran/}

\author{\sc Rachel Newton}
\address{Rachel Newton\\
Department of Mathematics\\
King's College London\\ 
Strand\\
London WC2R 2LS\\
UK.}
   \email{rachel.newton@kcl.ac.uk}
\urladdr{https://sites.google.com/view/rachelnewton}

\begin{abstract}
  The proofs of Theorem 1.1 and Theorem 1.5(2) in the authors' paper \emph{The Hasse norm principle for abelian extensions} are incorrect. We point out the mistakes and provide correct proofs, using techniques of the original paper.
\end{abstract}

\maketitle

\tableofcontents

\section{Introduction}

\subsection{The mistakes}
This is a corrigendum to the paper \cite{HNP}. The paper contains the following independent mistakes:
\begin{enumerate}
	\item\label{5.1} The proof of Theorem 5.1 has a gap in the dominated convergence argument, since the last part of Lemma 5.3 is false.
	\item In Lemma 6.7 the implication `$\implies$' is false. (A corrected statement is Lemma \ref{lem:WA} in this document.)
	\item\label{6.9} The final statement in Lemma 6.9 is false. Therefore Lemma 6.12 is false.
	\item\label{typo} Typo: sign error in equation (4.15). The Euler factor should start $1+(Q^\beta -1)q_v^{-1}$ in the case $q\equiv 1 \pmod Q$.
\end{enumerate}

The typo~\eqref{typo} is inconsequential but mistakes~\eqref{5.1}--\eqref{6.9} have the following consequences for the paper:
\begin{itemize}
	\item The proof of Theorem 1.1 is incomplete (it uses Theorem 5.1 and Lemma 6.12).
	\item The proof of Theorem 1.5(2) is incomplete (it uses Lemma 6.7).
	\item The proof of Theorem 5.2(2) is incomplete (it uses Theorem 5.1).
\end{itemize}

\begin{example}
	Here is an explicit counter-example to Lemma 6.7 and Lemma 6.9. Consider the number field $K/\QQ$
	given in \cite[\href{http://www.lmfdb.org/NumberField/8.0.10070523904.2}{Number Field 8.0.10070523904.2}]{LMFDB}. This is a $\ZZ/4\ZZ \times \ZZ/2\ZZ$ extension of $\QQ$ whose only non-cyclic decomposition group occurs
	at $7$ and is $(\ZZ/2\ZZ)^2$. However, one checks that the map
	$$\ZZ/2\ZZ = \HH^3(\ZZ/4\ZZ \times \ZZ/2\ZZ,\ZZ) \to \HH^3((\ZZ/2\ZZ)^2,\ZZ) = \ZZ/2\ZZ$$
	is the zero map, which implies that weak approximation holds. We are grateful to Andr\'{e} Macedo for providing this illustrative example.
\end{example}

We remark that none of these issues affect the subsequent paper \cite{HNP2} which involved only the Hasse norm principle (no weak approximation) 
and counting by conductor, which is substantially easier than counting by discriminant with the harmonic analysis approach.

\subsection{The fix} We focus on providing complete correct proofs for the main
results (Theorems 1.1 and 1.5(2)) from the introduction of~\cite{HNP}. Their statements are as follows. Let $k$ be a number field. 

\begin{theorem}[{\cite[Theorem 1.1]{HNP}}]\label{thm1.1}
 Let $n,r\in\ZZ$ with $n > 1$ and $r \geq 0$. Let $Q$ be the smallest prime dividing $n$ and let $G = \ZZ/n\ZZ\oplus (\ZZ/Q\ZZ)^r$. Then 0\% of $G$-extensions of $k$ fail the Hasse norm principle, when ordered by discriminant.
\end{theorem}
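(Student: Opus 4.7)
The plan is to combine the standard cohomological criterion for the HNP with the harmonic-analytic counting framework of \cite{HNP}, while avoiding the dominated-convergence argument that caused the gap in the original Theorem 5.1. By Tate's theorem, for an abelian $G$-extension $K/k$ the obstruction to the HNP is isomorphic to
$$\mathrm{Obs}(K/k) \cong \operatorname{coker}\Bigl(\bigoplus_v (G_v \wedge G_v) \longrightarrow G \wedge G\Bigr),$$
where $G_v$ ranges over the local decomposition groups. For $G = \ZZ/n\ZZ \oplus (\ZZ/Q\ZZ)^r$ the target $G \wedge G$ is a finite elementary abelian $Q$-group, so the HNP fails iff there is a nonzero $\alpha \in (G \wedge G)^\vee$ annihilating $G_v \wedge G_v$ for every place $v$. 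There are only finitely many such $\alpha$, so it suffices to prove that for each fixed nonzero $\alpha$ the count $N_G^\alpha(X)$ of $G$-extensions with $\disc \leq X$ satisfying this local constraint is of strictly smaller order than the total count $N_G(X)$.

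\textbf{Counting via harmonic analysis.} I would parametrise $G$-extensions by surjective continuous characters $\chi: C_k \twoheadrightarrow G$ via class field theory, with $\disc(K/k)$ encoded in the ramification of the local components $\chi_v : k_v^\times \to G$; here $G_v = \chi_v(k_v^\times)$ is the decomposition group at $v$. The Dirichlet series $F_G(s)$ and its constrained version $F_G^\alpha(s)$ then factor as Euler products, the local factor at $v$ being a weighted sum over $\chi_v$ (restricted in the constrained case by $\alpha|_{G_v \wedge G_v}=0$). Wright's method, as executed in \cite{HNP}, yields $N_G(X) \sim c_G X^{1/a(G)} (\log X)^{b(G)-1}$ for explicit constants.

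\textbf{Lowering the pole order.} For each nonzero $\alpha$, the local constraint forbids the generic non-cyclic ramification that would otherwise contribute the top-weight terms in the local Euler factors at a positive-density family of primes (those of residue characteristic not dividing $n$). This strictly lowers either the abscissa of convergence or the pole order of $F_G^\alpha(s)$ at its singularity compared with $F_G(s)$, and a standard Tauberian theorem of Delange--Ikehara type then gives $N_G^\alpha(X) = o(N_G(X))$. Summing over the finitely many nonzero $\alpha$ yields $N_G^{\mathrm{fail}}(X) = o(N_G(X))$.

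\textbf{Main obstacle.} The hard part is the Euler-factor comparison: for each $\alpha$ one must exhibit a positive-density family of primes and local character types that are forbidden by the constraint, and verify that the contribution of the remaining characters is genuinely of smaller order (not merely no larger). This is a finite case analysis over subgroups $H \leq G$ and classes $\alpha \in (G \wedge G)^\vee$, which should be tractable given the rigid structure of $G = \ZZ/n\ZZ \oplus (\ZZ/Q\ZZ)^r$. Crucially, because the outer sum over $\alpha$ is finite, there is no need to exchange a limit with an infinite sum, so the dominated-convergence gap in the original Theorem 5.1 is sidestepped entirely.
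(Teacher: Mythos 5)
Your reduction to finitely many nonzero $\alpha\in(\wedge^2 G)^\vee$ (equivalently, maximal proper subgroups $\Upsilon=\ker\alpha$ of $\wedge^2G$) and the translation of the failure of the HNP into a local condition on decomposition groups is exactly how the paper begins, and the class-field-theoretic counting framework is the right one. However, the step you label ``Lowering the pole order'' asserts precisely the false claim on which the original flawed proof foundered, and your diagnosis of the gap in the original Theorem 5.1 is inaccurate: the problematic limit interchange was never over the (always finite) set of $\alpha$, but over the places/Poisson frequencies in the Euler-product analysis, and more fundamentally the approach of showing each constrained series has a smaller singularity cannot work.

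Concretely, write $n=Q^am$ with $Q\nmid m$ and suppose $a\geq 2$. The local constraint attached to $\alpha$ reduces to a condition of the form: if a certain order-$Q$ element lies in $\varphi_v(I_v)$ then $\varphi_v(\Frob_v I_v)$ lies in a fixed index-$Q$ subgroup $L$ of $G$ which still has full $\FF_Q$-rank. Every character with image contained in $L$ satisfies this constraint vacuously, and these alone contribute $\gg B^{1/\alpha(G)}(\log B)^{\nu(k,G)-1}$ to the constrained count for \emph{each} subgroup $H\supseteq L$ occurring in the M\"obius inversion that sieves for surjectivity --- the same order as the unconstrained count. So the constrained Dirichlet series for each individual $H$ has a pole of the full order at $s=1/\alpha(G)$, and no Euler-factor comparison will show otherwise; the smallness of the surjective count is visible only as an exact cancellation of leading terms across the M\"obius sum. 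The paper's proof consists of establishing this cancellation: it pairs each $H\not\subseteq L$ with $J=H\cap L$, proves $N(H,L;B^{|H|/|G|})=N(J;B^{|J|/|G|})+o(B^{1/\alpha(G)}(\log B)^{\nu(k,G)-1})$ with \emph{equal} leading constants (which requires Poisson summation, uniform $L$-function bounds, and a H\"older-continuity version of Delange's Tauberian theorem), and then uses the identity $\mu(G/J)=-|(L/J)[Q]|\cdot\mu(L/J)$ to make the main terms cancel. When $Q^2\nmid n$ your argument is essentially correct (this is the paper's easy case, where each individual term really is of smaller order), but for $Q^2\mid n$ the proposal is missing the central idea of the proof.
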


\begin{theorem}[{\cite[Theorem 1.5(2)]{HNP}}] \label{thm1.5(2)}
Let $G$ be a non-trivial finite abelian group, let $Q$ be the smallest prime dividing $\lvert G\rvert$ and suppose that the $Q$-Sylow subgroup of $G$ is not cyclic. Then as $\varphi$ varies over all $G$-extensions of $k$, ordered by discriminant, 0\% of the tori $\Res_{K_\varphi/k}^1 \Gm$ satisfy weak approximation.
\end{theorem}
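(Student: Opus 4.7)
The plan is to combine the corrected cohomological criterion for failure of weak approximation (Lemma~\ref{lem:WA}) with the harmonic analysis counting framework from \cite[\S 4--5]{HNP}. First, I would apply Lemma~\ref{lem:WA} to translate failure of weak approximation for the torus $\Res^1_{K_\varphi/k}\Gm$ into a condition on the decomposition groups $D_v \leq G$ of $\varphi$: weak approximation fails precisely when the restriction $H^3(G,\ZZ) \to H^3(D_v,\ZZ)$ is non-zero on some distinguished class for at least one place $v$. As illustrated by the counter-example in the introduction, this is a strict refinement of the erroneous criterion ``some $D_v$ is non-cyclic'' of Lemma~6.7: the specific restriction must be non-trivial.

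Next, I would identify an explicit subgroup $D \leq G$ to play the role of a ``witness'' decomposition type. Under the hypothesis that the $Q$-Sylow subgroup $G_Q$ of $G$ is non-cyclic, a Sylow restriction argument shows that the $Q$-primary part of $H^3(G,\ZZ)$ is non-trivial and is detected by restriction to $G_Q$. One then chooses $D$ so that (i) the restriction $H^3(G,\ZZ) \to H^3(D,\ZZ)$ is non-zero, and (ii) $D$ is realisable as the decomposition group at a tamely ramified place with cyclic inertia of order $Q$, giving the minimal discriminant exponent $|G|(1-1/Q)$ and thus appearing in the main-term count. In the simplest case $D \cong (\ZZ/Q)^2$ suffices, arising from cyclic inertia of order $Q$ together with a Frobenius generating a complementary cyclic subgroup; in general one may need a larger non-cyclic $D$ to detect the class, since, as the counter-example shows, $(\ZZ/Q)^2 \subseteq G$ does not always suffice.

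With the witness $D$ fixed, the final step is to show that 100\% of $G$-extensions of $k$ (ordered by discriminant) admit at least one place $v$ with $D_v = D$. This is the content of a standard Euler product / Tauberian argument applied to the harmonic analysis machinery of \cite[\S 4--5]{HNP}: the Dirichlet series enumerating $G$-extensions which \emph{avoid} the configuration $D_v = D$ at every prime has a pole of strictly smaller order than the full series, because forbidding the good configuration removes a positive proportion of each local Euler factor and the good local configuration occurs at infinitely many primes. Combining this density-one statement with Lemma~\ref{lem:WA} completes the proof of the theorem.

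The main obstacle is to verify that the refined local condition of Lemma~\ref{lem:WA} interacts cleanly with the Poisson-summation / harmonic analysis setup of \cite[\S 4]{HNP}, in a way that circumvents the gaps in the erroneous Lemmas~6.7, 6.9, and~6.12. In particular, one must arrange the local conditions in the Euler product so that the finer distinction between ``non-cyclic $D_v$'' and ``$D_v$ detecting the cohomology class'' is correctly encoded, and verify that the resulting generating series admits a Tauberian analysis producing a strictly smaller order of growth than the main term. This compatibility check is the crux that the corrected Lemma~\ref{lem:WA} is designed to make possible.
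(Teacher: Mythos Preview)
Your overall strategy---apply Lemma~\ref{lem:WA}, identify a local ``witness'' configuration guaranteeing failure of weak approximation, then show that 100\% of $G$-extensions realise this configuration somewhere---is the same shape as the paper's proof. The paper, however, does not fix a single subgroup $D$ and ask for $D_v=D$; instead it uses Lemma~\ref{lem:relaxWA} to show that if $\wedge^2(\im\varphi_v)\to\wedge^2 G$ is trivial at $v$ then the implication
\[
e_i^{Q^{a_i-1}}\in\varphi_v(I_v)\ \Longrightarrow\ \varphi_v(\Frob_v I_v)\subseteq\langle M,e_1,\dots,e_{j-1},e_j^Q,e_{j+1},\dots,e_t\rangle
\]
holds. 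This sidesteps the delicate question you raise about whether a suitable witness $D$ with $\HH^3(G,\ZZ)\to\HH^3(D,\ZZ)$ non-zero and with $Q$-cyclic inertia always exists; the implication form of the condition is automatically compatible with the minimal-discriminant local behaviour.

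The genuine gap in your proposal is the claim that the density statement is a ``standard Euler product / Tauberian argument''. This is precisely the step that was wrong in the original paper: \cite[Theorem~5.1]{HNP} attempted exactly the argument you describe, and the dominated convergence it relied on (via \cite[Lemma~5.3]{HNP}) is, in the corrigendum's words, ``fatally flawed and cannot be rescued''. The difficulty is that after M\"obius inversion over subgroups $H\subseteq G$, the individual terms $N(H,L;B^{|H|/|G|})$ need \emph{not} be $o(B^{1/\alpha(G)}(\log B)^{\nu(k,G)-1})$ when the $Q$-Sylow of $G$ has exponent $\geq Q^2$ (Case~2 of \S3). The corrigendum's actual proof of Theorem~\ref{thm:countWA} occupies the bulk of the paper and requires pairing up subgroups $H\in W_1$ with $J=H\cap L\in W_2$ (Lemma~\ref{lem:pairH}) and exhibiting explicit cancellation of their leading terms via a careful comparison of the zeta functions $F_{H,f}(s)$ and $F_{J,f}(Qs)$ (Proposition~\ref{prop:cancellationWA} and Lemma~\ref{lem:tauberian_applicationWA}). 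Your sketch does not anticipate this obstruction, and the ``remove a positive proportion of each Euler factor'' heuristic you invoke is exactly the intuition that fails to survive the M\"obius inversion.
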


Proving these results turns out to be
actually quite delicate; the dominated convergence argument used in
the proof of \cite[Theorem 5.1]{HNP} is fatally flawed and cannot be rescued. Instead, we prove Theorems~\ref{thm1.1} and \ref{thm1.5(2)} by exhibiting explicit cancellation between the poles of different Dirichlet series which arise through M\"{o}bius inversion. Koymans and Rome have recently found alternative proofs of \cite[Theorem~1.1]{HNP}, see \cite{KR2} (for $k$ an arbitrary number field), and \cite[Theorem 1.5]{HNP}, see \cite{KR1} (only for $k = \Q$).

We now state the result which will imply both theorems. Let $k$ be a
number field and let $\Omega_k$ denote the set of all places of
$k$. We fix embeddings $\bar{k} \subset \bar{k}_v$ for all
$v\in\Omega_k$. For any non-archimedean place $v$, let
$I_v\subset\Gal(\bar{k}_v/k_v)$ be the inertia subgroup. Then the
coset $\Frob_v I_v$ is independent of the choice of Frobenius element
$\Frob_v\in \Gal(\bar{k}_v/k_v)$.

We will write our groups multiplicatively. For $n\in\ZZ$, we denote the group of $n$-th roots of unity in $\CC$ by $\mu_n$. Let $G$ be a finite abelian group and $Q$ the smallest prime dividing the order of $G$. For cyclic $G$, Theorem~\ref{thm1.1} follows from the Hasse norm theorem. Therefore, we assume throughout this corrigendum that the $Q$-Sylow subgroup of $G$ is not cyclic; this means that we may write $G=M\times \mu_{Q^{a_1}}\times \dots \times \mu_{Q^{a_t}}$ where $Q\nmid |M|$ and $t\geq 2$. Let $e_1,\dots ,e_t$ generate $\mu_{Q^{a_1}}, \dots, \mu_{Q^{a_t}}$, respectively.

As in~\cite{HNP}, by a \emph{sub-$G$-extension} of a field $F$ with separable closure $\bar{F}$, we mean a continuous homomorphism $\varphi: \Gal(\bar{F}/F)\to G$, and we call a surjective sub-$G$-extension of $F$ a \emph{$G$-extension} of $F$. 
Hence, a sub-$G$-extension $\varphi$ of $k$ induces a sub-$G$-extension $\varphi_v$ of $k_v$ at every place $v$. 

\begin{theorem}\label{thm:countWA}
Let $S$ be a finite set of places of $k$ containing all the archimedean places.
Let $i, j \in \{1,\ldots,t\}$ be distinct. For $v \notin S$, define $\Lambda_v$ to be the set of all sub-$G$-extensions $\varphi_v$ of $k_v$ for which the following implication holds:
\[
e_i^{Q^{a_i - 1}} \in \varphi_v(I_v) \implies 
\varphi_v(\Frob_v I_v) \subseteq \langle M, e_1,e_2,\dots,e_{j-1},e_j^Q,e_{j+1},\dots,e_t\rangle.
\]
Then only $0\%$ of all $G$-extensions $\varphi$ of $k$ satisfy $\varphi_v\in \Lambda_v$ for all $v\in \Omega_k\setminus S$, when ordered by discriminant.
\end{theorem}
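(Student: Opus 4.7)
The plan is to show that the Dirichlet series
\[
Z^\Lambda(s) := \sum_{\varphi} \disc(\varphi)^{-s},
\]
summing over all $G$-extensions $\varphi$ of $k$ with $\varphi_v \in \Lambda_v$ for every $v \in \Omega_k \setminus S$, has a pole at $s = 1/a(G)$ of order strictly smaller than the pole order $b_k(G)$ of the corresponding unconstrained series. Since the unconstrained count is $\sim c X^{1/a(G)}(\log X)^{b_k(G)-1}$ (Wright's theorem, as used throughout \cite{HNP}), a standard Tauberian argument will then yield the required $0\%$ density.

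I would first apply M\"obius inversion over the subgroup lattice of $G$, writing
\[
Z^\Lambda(s) = \sum_{H \leq G} \mu(H, G) \, T_H^\Lambda(s), \quad T_H^\Lambda(s) := \sum_{\psi\colon \Gal(\bar k/k) \to H} \disc_G(\iota_H\circ\psi)^{-s} \prod_{v \notin S} \mathbbm{1}_{\Lambda_v}((\iota_H\circ\psi)_v),
\]
where $\iota_H\colon H \hookrightarrow G$ denotes inclusion and $\disc_G(\varphi) := \prod_v q_v^{|G|(1-1/|\varphi_v(I_v)|)}$ is the $|G|$-weighted local discriminant, which agrees with the usual $\disc(\varphi)$ for surjective $\varphi$. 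Each $T_H^\Lambda(s)$ is now accessible via the usual Euler product machinery for abelian sub-extensions. Further decomposing a sub-$H$-extension by its image $H' \leq H$ and unwinding the weights expresses $T_H^\Lambda(s)$ as a sum over $H' \leq H$ of surjective sub-$H'$-extension series $Z_{H'}^\Lambda(s|G|/|H'|)$. A short local computation shows that each such piece contributes to the pole at $s = 1/a(G)$ with order at most $b(H') := |\{g \in H' : \ord g = Q\}|/[k(\mu_Q):k]$, with equality if and only if the constraint $\Lambda$ is vacuous on surjections $\Gal(\bar k/k) \to H'$. This vacuity holds precisely when $e_i^{Q^{a_i-1}} \notin H'$ or $H' \subseteq N := \langle M, e_1, \dots, e_{j-1}, e_j^Q, e_{j+1}, \dots, e_t \rangle$, and the maximum value $b(H') = b_k(G)$ requires $H' \supseteq G[Q]$, the $Q$-torsion subgroup, which contains every element of order $Q$.

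Combining both conditions, only subgroups $H'$ satisfying $G[Q] \leq H' \leq N$ can contribute to a pole of order $b_k(G)$. After swapping summations in the M\"obius expansion, the corresponding leading coefficient of $Z^\Lambda(s)$ equals
\[
\sum_{G[Q] \leq H' \leq N} c_{H'} \sum_{\substack{H \leq G \\ H' \leq H}} \mu(H, G)
\]
for explicit positive constants $c_{H'}$ coming from the leading coefficients of the surjective sub-$H'$-extension Dirichlet series. The inner sum equals $\delta_{H', G}$ by the defining identity of the M\"obius function on the subgroup lattice, and since every $H'$ in the outer range satisfies $H' \subseteq N \subsetneq G$, the condition $H' = G$ is never met and the entire coefficient vanishes identically. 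Hence $Z^\Lambda(s)$ has pole of order strictly less than $b_k(G)$ at $s = 1/a(G)$, as required. The main technical obstacle will be the local Euler factor analysis underpinning these pole-order statements: when the constraint genuinely restricts surjective sub-$H'$-extensions (i.e., $e_i^{Q^{a_i-1}} \in H'$ and $H' \not\subseteq N$), one must quantify the drop from $|H'|$ to $|H' \cap N| = |H'|/Q$ in the number of admissible Frobenius lifts at those tame places where inertia is $\langle e_i^{Q^{a_i-1}}\rangle$, and verify that a Chebotarev density computation turns this into a strict reduction in the global pole order of $Z_{H'}^\Lambda(t)$. One must likewise check that the spurious poles to the right of $s = 1/a(G)$ arising from subgroups whose smallest prime divisor exceeds $Q$ cancel in the M\"obius sum, so that a Tauberian theorem can be applied cleanly to $Z^\Lambda(s)$.
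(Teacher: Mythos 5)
There is a genuine gap, and it sits exactly where you defer to ``a short local computation'': the claim that each surjective series $Z_{H'}^\Lambda$ attains the maximal pole order $b(H')$ if and only if the constraint is vacuous on surjections onto $H'$. For $H'=G$ this claim \emph{is} the theorem, and it cannot be established locally, because surjective series have no Euler product; only the full series $T_{H}^\Lambda$ (summing over all sub-$H$-extensions, the paper's $N(H,L;\cdot)$, with your $N$ being the paper's $L$) do. When one actually computes those Euler products, the expected pole-order reduction fails whenever $a_t\geq 2$: for $H\not\subseteq L$ of maximal $\FF_Q$-rank, $T_H^\Lambda$ still has a pole of the \emph{full} order $\nu(k,G)$. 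The loss of density in the principal Euler factor (your ``drop from $|H'|$ to $|H'\cap N|$ in admissible Frobenius lifts'' gives a non-integral exponent $Q^{\beta_G}-Q+1-1/Q$ in place of $Q^{\beta_G}-1$) is compensated by contributions of non-principal characters after Poisson summation. So the heuristic that the constraint strictly reduces pole orders is false at the only level where local analysis is available, and your argument does not distinguish the case $a_t=1$ (where the individual terms genuinely are of smaller order) from $a_t\geq 2$ (where they are not). This is precisely the error that made the original published proof collapse.

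Because of this, your final M\"obius step is circular. The identity $\sum_{H\supseteq H'}\mu(H,G)=\delta_{H',G}$ applied to $T_H^\Lambda=\sum_{H'\leq H}Z_{H'}^\Lambda$ merely returns $Z^\Lambda=Z_G^\Lambda$; to conclude that the leading coefficient vanishes you must already know that $H'=G$ is absent from the list of subgroups contributing a full-order pole, which is the statement to be proved. The workable route (for $a_t\geq 2$) is to exhibit explicit cancellation in the alternating sum $\sum_H\mu(G/H)N(H,L;B^{|H|/|G|})$: pair each maximal-rank $H\not\subseteq L$ with $J=H\cap L$, use the identity $\mu(G/J)=-|(L/J)[Q]|\cdot\mu(L/J)$ together with the fact that the fibre over $J$ has exactly $|(L/J)[Q]|$ elements, and then --- this is the analytic core --- show via Poisson summation that $F_{H,f}(s)$ and the unconstrained $F_{J}(Qs)$ have the same pole order \emph{and the same leading constant}, with a H\"older-type error term strong enough for a Tauberian theorem with merely continuous (not holomorphic) boundary behaviour. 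None of this is present in, or implied by, the proposal as written.
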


We use this to prove our results as follows.

\subsection{Proof of Theorem~\ref{thm1.1}}
This theorem concerns groups of the form $G = M\times \mu_Q^{t-1}\times \mu_{Q^a}$, where $t\geq 2$, $a\geq 1$, $Q$ is a
prime, and $M$ is a cyclic group such that all prime divisors of $|M|$
are larger than $Q$.
 We need to show that only $0\%$ of $G$-extensions of
bounded discriminant fail the Hasse norm principle. We use the notation of Theorem~\ref{thm:countWA}.

If a $G$-extension $\varphi$ of $k$ fails the Hasse norm principle, then by \cite[Lemma
4.2]{HNP2} there is a maximal proper subgroup $\Upsilon$ of $\wedge^2 G$ such that,
for all places $v$ of $k$, the image of the natural map
$\wedge^2(\im\varphi_v)\to \wedge^2G$ is contained in $\Upsilon$. Hence, it
is enough to show that, for every such $\Upsilon$, this occurs for $0\%$ of all
$G$-extensions of $k$.

Henceforth, for a subgroup $H$ of $G$, we abuse notation by writing $\wedge^2 H\subseteq \Upsilon$ to mean that the image of the natural map
$\wedge^2H\to \wedge^2G$ is contained in $\Upsilon$.

\begin{lemma}\label{lem:relaxed_condition_no_cancellation}
  Let $\Upsilon\subset \wedge^2 G$ be a maximal proper subgroup. Then
  there exist
   $e_1,\dots , e_t\in G$ such that $e_t$ has order $Q^a$, $e_1,\dots, e_{t-1}$ have order $Q$ and $e_1,\dots, e_t$ generate $G/M$, and 
  indices
  $i,j\in\{1,\ldots,t\}$ with $i \leq t-1$ and 
  the following property: 
  for every finite place $v$ of $k$ and $G$-extension $\varphi$ of $k$ 
  such that $\wedge^2( \im \varphi_v)\subseteq \Upsilon$,
  we have
\begin{equation}\label{eq:missingh_cond}
e_i \in \varphi_v(I_v) \implies \varphi_v(\Frob_v I_v)\subseteq \langle M, e_1, \dots, e_{j-1}, e_j^Q,e_{j+1} ,\dots , e_t \rangle.
\end{equation}
\end{lemma}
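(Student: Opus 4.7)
The plan is to rephrase the conclusion in linear-algebraic terms on $V := H/QH$, where $H$ denotes the $Q$-Sylow subgroup of $G$, and then choose a basis of $V$ adapted to the alternating form dual to $\Upsilon$. Since $M$ is cyclic of order coprime to $Q$, the natural map $\wedge^2 H \to \wedge^2 G$ is an isomorphism, and $\wedge^2 H$ is an $\FF_Q$-vector space of dimension $\binom{t}{2}$. Consequently $\Upsilon$ has index $Q$, and there is a nonzero alternating form $\beta: G \times G \to \FF_Q$ with $\ker\beta = \Upsilon$; the form $\beta$ vanishes on $M$ and factors through $V$ as a nonzero alternating form $\bar\beta$. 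The condition $\wedge^2(\im\varphi_v) \subseteq \Upsilon$ becomes vanishing of $\bar\beta$ on the image of $\im\varphi_v$ in $V$.

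Next I would translate the order constraints on generators into conditions on $V$. Let $V_0 \subseteq V$ be the image of $H[Q]$: this is a hyperplane when $a \geq 2$, and equals $V$ when $a = 1$. A direct check shows that an element $e \in H$ has order $Q$ iff $\bar e \in V_0$, while any lift of a class outside $V_0$ has order $Q^a$ (using that $H$ has exponent $Q^a$). Choosing generators $e_1, \ldots, e_t$ with the prescribed orders thus reduces to choosing a basis $\bar e_1, \ldots, \bar e_t$ of $V$ with $\bar e_1, \ldots, \bar e_{t-1} \in V_0$ and, if $a \geq 2$, $\bar e_t \notin V_0$. The conclusion about $N_j$ translates into asking for indices $i \leq t-1$ and $j \neq i$ with $\bar\beta(\bar e_i, \bar e_j) \neq 0$ and $\bar\beta(\bar e_i, \bar e_k) = 0$ for $k \neq j$: for then $\ker \bar\beta(\bar e_i, \cdot)$ equals the hyperplane spanned by the $\bar e_k$ with $k \neq j$, and this lifts precisely to $N_j$.

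The construction splits according to whether $\bar\beta$ vanishes on $V_0$. If $\bar\beta|_{V_0} \neq 0$ (automatic when $a = 1$), pick $u, v \in V_0$ with $\bar\beta(u, v) \neq 0$, take $\bar e_1 = u$ and $\bar e_2 = v$, and routinely extend to a basis $\bar e_1, \ldots, \bar e_t$ with the remaining $\bar e_k$ lying in $\ker\bar\beta(u, \cdot)$, placing $\bar e_t$ outside $V_0$ if $a \geq 2$ and subtracting a suitable scalar multiple of $v$ to enforce $\bar\beta(u, \bar e_t) = 0$; then $(i, j) = (1, 2)$ works. If instead $\bar\beta|_{V_0} = 0$ (forcing $a \geq 2$), the hyperplane $V_0$ cannot lie inside $\rad\bar\beta$: otherwise $\bar\beta$ would descend to a non-degenerate alternating form on $V/\rad\bar\beta$ of odd dimension at most $1$, forcing $\bar\beta = 0$. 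Hence there exist $u \in V_0$ and $v \in V \setminus V_0$ with $\bar\beta(u, v) \neq 0$; set $\bar e_1 = u$, $\bar e_t = v$, and complete $u$ to a basis $\bar e_2, \ldots, \bar e_{t-1}$ of $V_0$. Then $(i, j) = (1, t)$ works, since $\bar\beta|_{V_0} = 0$ kills the remaining pairings.

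Finally, I would lift this basis of $V$ to generators $e_1, \ldots, e_t$ of $H$ with the prescribed orders. For any $\varphi$ and place $v$ with $\wedge^2(\im\varphi_v) \subseteq \Upsilon$ and $e_i \in \varphi_v(I_v)$, both $e_i$ and $\varphi_v(\Frob_v)$ lie in $\im\varphi_v$, so bilinearity forces $\beta(e_i, \varphi_v(\Frob_v)) = 0$ and hence $\varphi_v(\Frob_v) \in N_j$ by construction. The main delicate point is Case B, where the order constraint forbids an honest symplectic pair inside $V_0$; the parity obstruction to non-degenerate alternating forms on odd-dimensional spaces is the key observation that unlocks a compatible choice of $\bar e_i$ and $\bar e_t$.
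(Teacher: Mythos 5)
Your proposal is correct. It is the same underlying idea as the paper's proof --- in both arguments one studies, for a suitable order-$Q$ generator $e_i$, the ``annihilator'' $\{g\in G:\ e_i\wedge g\in\Upsilon\}$ and arranges generators so that this annihilator is exactly $\langle M,e_1,\dots,e_{j-1},e_j^Q,e_{j+1},\dots,e_t\rangle$, with the case split governed by whether the pairing induced by $\Upsilon$ vanishes on the order-$Q$ part of $G$ --- but your execution is genuinely different and arguably cleaner. The paper works directly with subgroups of $G$: in its first case it must show that $L_i=\{g:\epsilon_i\wedge g\in\Upsilon\}$ contains an element of order $Q^a$ (via a contradiction argument with $G[Q^{a-1}]$) in order to identify $L_i\cong M\times\mu_Q^{t-2}\times\mu_{Q^a}$ and split $G\cong\mu_Q\times L_i$. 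Your linearisation to $V=H/QH$ with the alternating form $\bar\beta$ and the distinguished hyperplane $V_0$ replaces that subgroup analysis with routine basis extension, and the order constraint on $e_t$ becomes simply $\bar e_t\notin V_0$; your Case B uses the parity obstruction $V_0\not\subseteq\rad\bar\beta$ where the paper just observes that $L_i$ contains the maximal subgroup $\langle M,e_1,\dots,e_{t-1},e_t^Q\rangle$. Both routes are valid; yours also treats $a=1$ uniformly inside Case A. One minor imprecision: the claim ``$e\in H$ has order $Q$ iff $\bar e\in V_0$'' fails in the direction you do not use when $a\geq 3$ (a class in $V_0$ can have lifts of order up to $Q^{a-1}$); what is true, and what your argument actually needs, is that every nonzero class in $V_0$ admits a lift of order exactly $Q$ and every class outside $V_0$ lifts only to elements of order $Q^a$. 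With that rephrasing the proof is complete.
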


\begin{proof}
We split the proof into two cases. 
First assume that $\wedge^2(\mu_Q^{t-1})\not\subseteq \Upsilon$, so that there
are order $Q$ elements
$\epsilon_i,\epsilon_j\in \mu_Q^{t-1}$
such that $\epsilon_i\wedge \epsilon_j\notin\Upsilon$.
Define $L_i:=\{g\in G\mid \epsilon_i\wedge g\in \Upsilon\}$. Then, if $\epsilon_i \in \varphi_v(I_v)$, the assumption $\wedge^2(\im \varphi_v)\subseteq \Upsilon$ implies $\varphi_v(\Frob_v I_v)\subseteq L_i$. We will now find generators of $G$ for which $L_i$ becomes the set on the right-hand side of \eqref{eq:missingh_cond}.

Note that $M\subseteq L_i$ since $\epsilon_i\wedge M=1$. Furthermore, $L_i$ is
a proper subgroup of $G$ since $\epsilon_j\notin L_i$.  By maximality of
$\Upsilon$, we have $|(\wedge^2 G)/\Upsilon|=Q$ and hence
$|(\epsilon_i\wedge G)/(\epsilon_i\wedge L_i)|\mid Q$, as
$\epsilon_i\wedge L_i=\Upsilon\cap (\epsilon_i\wedge G)$. Since
$\epsilon_i\wedge \epsilon_j\notin \Upsilon$, we have
$\epsilon_i\wedge L_i\neq \epsilon_i\wedge G$ and hence
$|(\epsilon_i\wedge G)/(\epsilon_i\wedge L_i)|= Q$.  In the case $a\geq 2$,
suppose for contradiction that $L_i\subseteq M\times G[Q^{a-1}]$.
  Then
$\epsilon_i\wedge L_i\subseteq \epsilon_i\wedge G[Q^{a-1}]$ and since
\[|(\epsilon_i\wedge G)/(\epsilon_i\wedge L_i)|=Q=|(\epsilon_i\wedge
  G)/(\epsilon_i\wedge G[Q^{a-1}])|,\] it follows that
$\epsilon_i\wedge L_i= \epsilon_i\wedge G[Q^{a-1}]$. This contradicts the fact
that $\epsilon_i\wedge \epsilon_j\notin \Upsilon$. Therefore, $L_i$ contains an
element of order $Q^a$.
Therefore, both when $a\geq 2$ and when $a=1$, we deduce that $L_i\cong M \times\mu_Q^s\times \mu_{Q^a}$ for some $s\leq t-2$, because $L_i$ is a proper subgroup of $G$ containing $M$. Since $\epsilon_i\in L_i$, it follows that $\epsilon_i\wedge L_i\cong \FF_Q^{s}$. Now $\epsilon_i\wedge G \cong \FF_Q^{t}$ and $|(\epsilon_i\wedge G)/(\epsilon_i\wedge L_i)|=Q$ together yield $s=t-1$. Thus, 
\begin{equation}\label{eq:L in G}
G\cong \mu_Q\times L_i.
\end{equation}
Fix a choice of isomorphism as in~\eqref{eq:L in G} and use it to identify $G$ with $ \mu_Q\times L_i$. Now choosing generators $e_1$ for the `extra' copy of $\mu_Q$ on the right-hand side of \eqref{eq:L in G} and $\epsilon_i=e_2,\dots,e_{t}$ for the $Q$-primary part of $L_i$ (where $e_1,\dots,e_{t-1}$ each have order $Q$ and $e_t$ has order $Q^a$) gives a choice of generators $e_1,\dots,e_{t}$ for the $Q$-primary part of $G$ such that $L_i=\langle M, e_2,\dots, e_t\rangle$, completing the proof in this case (with $(i,j)=(2,1)$).

Now assume that $\wedge^2(\mu_Q^{t-1})\subseteq \Upsilon$. Let $e_1,\dots, e_{t-1}$ generate $\mu_Q^{t-1}$ and let $e_t$ generate $\mu_{Q^a}$. Since $\Upsilon\neq \wedge^2G$ and the $e_m\wedge e_n$ for $1\leq m<n\leq t$ generate $\wedge^2G$, there exists $i\leq t-1$ such that $e_i\wedge e_t\notin\Upsilon$. Suppose $e_i \in \varphi_v(I_v)$. The assumption $\wedge^2(\im \varphi_v)\subseteq \Upsilon$ implies that \[\varphi_v(\Frob_v I_v)\subseteq L_i:=\{g\in G\mid e_i\wedge g\in \Upsilon\}.\] Note that $L_i$ is a proper subgroup of $G$ because $e_t\notin L_i$. Moreover, $L_i$ contains the maximal proper subgroup $\langle M, e_1, \dots , e_{t-1}, e_t^Q\rangle$ since $\wedge^2\mu_Q^{t-1}\subseteq \Upsilon$ and $e_i\wedge e_t^Q=e_i\wedge M=1$. Thus, $L_i=\langle M, e_1, \dots , e_{t-1}, e_t^Q\rangle$, completing the proof.
\end{proof}

Theorem~\ref{thm1.1} now follows immediately from Theorem \ref{thm:countWA}, noting that $a_i=1$ in the notation of Theorem \ref{thm:countWA}.

\subsection{Proof of Theorem~\ref{thm1.5(2)}}
We require the following corrected version of \cite[Lemma 6.7]{HNP}.

\begin{lemma} \label{lem:WA}
	Let $G$ be a finite abelian group and $\varphi$ a $G$-extension of $k$ with associated
	field $K$.
	Then $\Res_{K/k}^1 \Gm$
	satisfies weak approximation if and only if the induced map
	$$ \wedge^2 (\im \varphi_v) \to \wedge^2 G$$
	is the trivial map for all places $v$ of $k$. 
      \end{lemma}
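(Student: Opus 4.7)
The plan is to use the cohomological description of the defect of weak approximation $A(T)$ for an algebraic torus, due to Voskresenskii and Sansuc --- this is the same general framework in which the original (faulty) \cite[Lemma 6.7]{HNP} was argued, and the fix lies in the precise identification of the obstruction group rather than in the overall approach.

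First I would write down the character lattice of $T=\Res_{K/k}^1\Gm$. Setting $N_G=\sum_{g\in G}g\in \ZZ[G]$, the $G$-module $\hat T$ fits into the short exact sequence
\[
0\longrightarrow \ZZ\xrightarrow{\,1\mapsto N_G\,}\ZZ[G]\longrightarrow \hat T\longrightarrow 0.
\]
Since $\ZZ[G]$ is cohomologically trivial over every subgroup $H\le G$, dimension shifting in Tate cohomology yields $\widehat{H}^{i}(H,\hat T)\cong \widehat{H}^{i+1}(H,\ZZ)$ for every $H\le G$. For a finite abelian $H$, the Schur multiplier satisfies $H_2(H,\ZZ)\cong \wedge^2 H$, and combining this with universal coefficients identifies $H^3(H,\ZZ)$ with $\wedge^2 H$ (up to Pontryagin duality, which is harmless for finite abelian groups). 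Crucially, this identification is functorial in $H$, so the map induced by the inclusion $H\hookrightarrow G$ becomes the natural map $\wedge^2 H\to \wedge^2 G$.

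Second I would feed these identifications into the Voskresenskii--Sansuc formula for $A(T)$, which expresses the defect of weak approximation in terms of restriction maps on the Galois cohomology of $\hat T$ from $G$ to the decomposition groups $G_v=\im\varphi_v$. After the dimension shift and the Schur-multiplier identification, the condition $A(T)=0$ translates into the statement that the natural map $\wedge^2 G_v\to \wedge^2 G$ vanishes at every place $v$, including the archimedean ones. This yields the iff asserted in the lemma.

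The main obstacle is tracking duality and the direction of maps correctly. The original \cite[Lemma 6.7]{HNP} went wrong precisely here, producing a condition that is sufficient but not necessary; the introductory example exhibits a non-cyclic $G_v$ with $\wedge^2 G_v\ne 0$ whose image in $\wedge^2 G$ nonetheless vanishes, so weak approximation still holds. I would therefore be careful to track restriction versus corestriction at each dimension shift, and sanity-check the equivalence on small test cases (cyclic decomposition groups, where the condition is automatic and weak approximation is known to hold; and the $\ZZ/4\ZZ\times\ZZ/2\ZZ$ example of the introduction) before committing to the final argument.
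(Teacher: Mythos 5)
Your proposal takes essentially the same route as the paper's proof: the Voskresenski\u{\i}--Tate--Sansuc description of the weak approximation defect for $\Res^1_{K/k}\Gm$ reduces the question to whether $\mathrm{H}^3(G,\ZZ)\to\prod_v\mathrm{H}^3(\im\varphi_v,\ZZ)$ is the zero map, which is then dualised via the functorial identification of $\mathrm{H}^3(H,\ZZ)$ with the dual of $\wedge^2 H$ (the paper cites this as \cite[Lemma 6.4]{HNP}, whereas you re-derive it by dimension shifting along $0\to\ZZ\to\ZZ[G]\to\hat T\to 0$ and the Schur multiplier). The approach is correct.
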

      
\begin{proof}
	Results of Tate and Voskresenski\v{\i} (\cite[Theorems~6.1, 6.2]{HNP}) imply that weak approximation
	holds if and only if 
	$$ \mathrm{H}^3(G,\ZZ) = \ker\left(\mathrm{H}^3(G,\ZZ) \to \prod_v \mathrm{H}^3(\im \varphi_v,\ZZ)\right).$$
	Dualising and applying~\cite[Lemma 6.4]{HNP}, this is equivalent to
	$$\im\left( \prod_v \wedge^2 (\im\varphi_v) \to \wedge^2 G\right) =
        \{1\},$$
	which is equivalent to the statement of the lemma.
\end{proof}

We use this to prove Theorem~\ref{thm1.5(2)} as follows. This theorem concerns finite
abelian groups $G$ for which the $Q$-Sylow subgroup is not cyclic. Such groups can be written in the form $G=M\times \mu_{Q^{a_1}}\times \dots \times \mu_{Q^{a_t}}$ where $t\geq 2$ and all primes dividing $|M|$ are greater than $Q$. 
Let $e_1,\dots ,e_t$ generate $\mu_{Q^{a_1}}, \dots, \mu_{Q^{a_t}}$, respectively. We need to show that for only $0\%$ of $G$-extensions $\varphi$ of $k$ the torus $\Res_{K_\varphi/k}^1 \Gm$ satisfies weak approximation. By Lemma \ref{lem:WA} this happens if and only if $\im(\wedge^2 (\im \varphi_v) \to \wedge^2 G) = \{1\}$ for all places $v$ of $k$. Thus the result follows immediately from Lemma \ref{lem:WA}, Theorem \ref{thm:countWA}, and the following.

\begin{lemma}\label{lem:relaxWA}
Let $i,j\in\{1,\ldots,t\}$ be any two distinct indices with $a_i\leq a_j$. Then, for any finite place $v$ of $k$ that satisfies $\im(\wedge^2 (\im \varphi_v) \to \wedge^2 G) = \{1\} $, we have
\[e_i^{Q^{a_i-1}}\in\varphi_v(I_v) \implies \varphi_v(\Frob_v I_v)\subseteq \langle M, e_1, \dots, e_{j-1}, e_j^Q,e_{j+1} ,\dots , e_t \rangle.\]
\end{lemma}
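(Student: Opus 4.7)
The plan is to translate the vanishing of $\wedge^2(\im \varphi_v) \to \wedge^2 G$ into a divisibility condition on the $e_j$-coordinate of an arbitrary element of $\varphi_v(\Frob_v I_v)$. First, I would fix $g \in \varphi_v(\Frob_v I_v)$; since both $e_i^{Q^{a_i-1}} \in \varphi_v(I_v)$ and $g$ lie in $\im \varphi_v$, the hypothesis forces $e_i^{Q^{a_i-1}} \wedge g = 1$ in $\wedge^2 G$. The whole proof then reduces to extracting $g \in \langle M, e_1, \dots, e_{j-1}, e_j^Q, e_{j+1}, \dots, e_t \rangle$ from this single identity.

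Next, I would compute $\wedge^2 G$ explicitly using the decomposition $G = M \times \mu_{Q^{a_1}} \times \dots \times \mu_{Q^{a_t}}$. Because $\gcd(|M|, Q) = 1$, all $M$-mixed wedge terms vanish, and the relevant part decomposes as an internal direct sum
\[
\bigoplus_{1 \leq s < u \leq t} \langle e_s \wedge e_u \rangle,
\]
where each summand $\langle e_s \wedge e_u \rangle$ is cyclic of order $Q^{\min(a_s,a_u)}$. Writing $g = m \cdot \prod_s e_s^{b_s}$ with $m \in M$, bilinearity of the wedge together with $e_i \wedge m = 1$ and $e_i \wedge e_i^{b_i} = 1$ turns the identity above into
\[
\prod_{s \neq i} (e_i \wedge e_s)^{b_s Q^{a_i-1}} = 1.
\]

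Finally, I would analyse each factor separately. In the $\{i,s\}$-summand, $(e_i \wedge e_s)^{Q^{a_i-1}}$ is trivial exactly when $a_s < a_i$ (as then $Q^{a_s} \mid Q^{a_i-1}$) and has order exactly $Q$ when $a_s \geq a_i$ (as $e_i \wedge e_s$ then has order $Q^{a_i}$). Independence of the summands therefore forces $Q \mid b_s$ for every $s \neq i$ with $a_s \geq a_i$. The hypothesis $a_j \geq a_i$ then delivers $Q \mid b_j$, so $e_j^{b_j} \in \langle e_j^Q \rangle$ and $g$ lies in the claimed subgroup.

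I do not anticipate a genuine obstacle. The only point requiring care is the identification of the $Q$-part of $\wedge^2 G$ and the exact orders of its generators, but this is a standard computation for finite abelian groups. The role of the hypothesis $a_i \leq a_j$ becomes transparent from this analysis: it is precisely what guarantees that the $\{i,j\}$-summand contributes non-trivially, so that the vanishing of the full wedge imposes a non-trivial constraint on $b_j$.
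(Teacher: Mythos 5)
Your proposal is correct and follows essentially the same route as the paper: both reduce to the identity $e_i^{Q^{a_i-1}}\wedge x=1$, decompose the $Q$-primary part of $\wedge^2 G$ as $\bigoplus_{s<u}\langle e_s\wedge e_u\rangle$ with $e_i\wedge e_j$ of order $Q^{\min(a_i,a_j)}=Q^{a_i}$, and deduce $Q\mid b_j$. Your write-up merely makes explicit the order computation that the paper leaves to the reader.
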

\begin{proof}
  The element $e_i\wedge e_j$ has order $Q^{a_i}$ in $\wedge^2G$. Suppose that $e_i^{Q^{a_i-1}}\in\varphi_v(I_v)$ and let $x\in  \varphi_v(\Frob_v I_v)$. Since $\im(\wedge^2 (\im \varphi_v) \to \wedge^2 G)= \{1\} $, we have $e_i^{Q^{a_i-1}}\wedge x=1\in\wedge^2 G$. Write $x=me_1^{b_1}\dots e_t^{b_t}$ for some $m\in M$ and $b_1,\dots , b_t\in \ZZ$. Now use the fact that the $Q$-primary part of $\wedge^2G$ is
  $\bigoplus_{1\leq k<\ell\leq t}\mu_{Q^{a_k}}\otimes \mu_{Q^{a_\ell}}$,
where $\mu_{Q^{a_k}}\otimes \mu_{Q^{a_\ell}}$ is generated by $e_k\wedge e_\ell$, to see that $Q\mid b_j$.
\end{proof}

The remainder of the corrigendum is devoted to the proof of Theorem \ref{thm:countWA}. We begin with some preliminary results.

\section{Preliminaries}

\subsection{Tauberian theorem}

We require the following special case of Delange's Tauberian theorem.

\begin{theorem}\label{thm:delangeWA}
  Let $a>0$ and let
  $\mathfrak{f}(s) = \sum_{n = 1}^\infty a_n/n^s$ be a Dirichlet
	series with real non-negative coefficients which converges on $\re s > a$.
	Suppose that there exists a real number $\omega > 0$ such that 
	the function $\mathfrak{g}(s)=\mathfrak{f}(s)(s-a)^\omega$ admits an extension to a
	continuous function on the domain $\re s \geq a$ with $\mathfrak{g}(a) > 0$.
	Assume that there exists $\delta \in (0,1]$
	such that
	$$\mathfrak{g}(s) = \mathfrak{g}(a) + O((s-a)^\delta), \quad \text{as } s \to a \quad \text{for } \re s > a.$$
	If $0  <\omega < 1$ assume also that the derivative $\mathfrak{g}'$
	admits an extension to a continuous function on the domain
	$\re s \geq a$.
	Then
	$$\sum_{n \leq x} a_n \sim \frac{\mathfrak{g}(a)}{a\Gamma(\omega)}x^a(\log x)^{\omega-1}, \quad 
	\text{ as } x \to \infty.$$
      \end{theorem}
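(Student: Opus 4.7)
The plan is to prove Theorem~\ref{thm:delangeWA} by the Selberg--Delange method: a contour-integration argument adapted to the hypothesis that $\mathfrak{g}$ is only continuous (not holomorphic) on the line $\re s = a$. I would begin from a smoothed Perron-type formula
$$\sum_n a_n \eta(n/x) \;=\; \frac{1}{2\pi i}\int_{(c)} \mathfrak{f}(s)\hat\eta(s)x^s\,ds$$
for any $c > a$, where $\eta$ is a fixed non-negative bump function of compact support with rapidly decaying Mellin transform $\hat\eta$, making the integral absolutely convergent. Non-negativity of the coefficients $a_n$ will let us sandwich $\mathbf{1}_{[0,1]}$ between upper and lower smoothings $\eta_\pm$ and so recover the sharp-cutoff partial sum at the end.

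The heart of the argument is to shift the contour to a Hankel path $\mathcal{H}_r$ consisting of a circle of small radius $r$ about the branch point $s=a$, connected to two vertical segments along $\re s = a$. Writing
\[
\mathfrak{f}(s) \;=\; \mathfrak{g}(a)(s-a)^{-\omega} + (\mathfrak{g}(s)-\mathfrak{g}(a))(s-a)^{-\omega},
\]
the first summand gives the main term via the classical Hankel representation of $1/\Gamma$:
$$\frac{1}{2\pi i}\int_{\mathcal{H}} (s-a)^{-\omega}\hat\eta(s)x^s\,ds \;\sim\; \frac{\hat\eta(a)}{\Gamma(\omega)}\,x^a(\log x)^{\omega-1}.$$
The hypothesis $\mathfrak{g}(s)-\mathfrak{g}(a) = O((s-a)^\delta)$ bounds the circular-loop part of the second summand by $O(r^{1+\delta-\omega})$, which becomes negligible once $r$ is optimised in $x$.

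The main obstacle, and the reason for the extra hypothesis on $\mathfrak{g}'$ when $0 < \omega < 1$, is bounding the contribution of the two vertical segments on $\re s = a$. For $\omega \geq 1$ the main term $x^a(\log x)^{\omega-1}$ is of size at least $x^a$, and a direct Wiener--Ikehara-type estimate based on continuity of $\mathfrak{g}$ on $\re s = a$ delivers a satisfactory $o(x^a)$ bound. For $0 < \omega < 1$ the main term is strictly smaller than $x^a$, so we need a sharper bound of strength $o(x^a(\log x)^{\omega-1})$; the continuous extension of $\mathfrak{g}'$ to $\re s \geq a$ then enables one integration by parts against $\hat\eta(s)x^s$ on the vertical segments, lowering the exponent of the branch-point factor to $1-\omega$ (which is now integrable across $s = a$) and producing the required estimate after a second appeal to Wiener--Ikehara. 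Letting $\eta_\pm \to \mathbf{1}_{[0,1]}$, noting $\hat\eta_\pm(a)\to 1/a$, and exploiting non-negativity of the $a_n$ to sandwich the true partial sum then completes the proof.
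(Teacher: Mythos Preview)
Your route is genuinely different from the paper's. The paper does not carry out any contour integration at all: it applies Delange's own Tauberian theorem \cite[Th\'eor\`eme~I]{Del54} as a black box, with $\alpha(t)=\sum_{n\le e^t}a_n$ and comparison function $\beta(t)=g(a)t^{\omega-1}/\Gamma(\omega)$, and simply checks that $F(s)=f(s)-G(s-a)=(s-a)^{-\omega}(g(s)-g(a))-h(s)$ satisfies Delange's local hypothesis near $s=a$, namely $F(s+a)=O(\psi(|s|)/\gamma(1/|s|))$ with $\psi(r)=r^{\delta-1}$. The split into $\omega\ge 1$ versus $0<\omega<1$ arises because Delange's theorem, in the latter range, asks for the same kind of bound on $F'$ rather than $F$; this is exactly where the extra assumption on $\mathfrak g'$ is spent. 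So the paper's proof is essentially a two-line verification, with all Tauberian content outsourced to \cite{Del54}.

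Your plan tries to reprove that Tauberian content from scratch, and this is where the gap lies. Two concrete problems. First, you have no growth control on $\mathfrak g$ along $\re s=a$: the hypotheses give only continuity, not any bound in $|\im s|$, so neither the contour shift onto the critical line nor the absolute convergence of your vertical integrals is justified---the rapid decay of $\hat\eta$ cannot compensate for arbitrary growth of $\mathfrak g(a+it)$. Second, and more fundamentally, the step you label ``a direct Wiener--Ikehara-type estimate'' is not a minor lemma but exactly the hard Tauberian input the theorem is asserting; invoking it for the vertical segments is effectively circular (for $\omega=1$ it \emph{is} Wiener--Ikehara). Delange's Th\'eor\`eme~I encodes precisely this passage from boundary continuity to an asymptotic, via a Fej\'er-kernel positivity argument rather than a naive contour shift. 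Your integration-by-parts idea for $0<\omega<1$ is in the right spirit---it parallels the paper's use of $\mathfrak g'$ to bound $F'$---but it still terminates in ``a second appeal to Wiener--Ikehara'', so the gap remains. If you want a self-contained proof, you must either reproduce Delange's argument or replace the contour shift by a genuine Tauberian device; otherwise, the cleanest fix is to do what the paper does and quote \cite[Th\'eor\`eme~I]{Del54}.
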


      \begin{proof}
	We prove this as an application of \cite[Thm.~I]{Del54}, closely following
	the proof of  \cite[Thm.~III, \S 5.2.1]{Del54}, which imposes the stronger assumption that
	$\mathfrak{g}$ admits a holomorphic continuation to $s=a$. 
	
	We begin with a standard reduction. The result \cite[Thm.~I]{Del54} states a Tauberian
	theorem for a suitable function  $\alpha(t)$ in terms of the analytic properties
	of its Laplace
	transform $f(s) = \int_0^\infty e^{-st} \alpha(t) \mathrm{d}t$.	
	For our application we take $\alpha(t) = \sum_{n \leq e^t} a_n$, which for $\re s > a$ gives
	$$f(s) = \int_0^\infty e^{-st} \sum_{n \leq e^t} a_n \mathrm{d}t = 
	\int_1^\infty x^{-s-1} \sum_{n \leq x} a_n \mathrm{d}x = 
	\frac{1}{s}\sum_{n =1}^\infty \frac{a_n}{n^s} = \frac{1}{s}\mathfrak{f}(s).$$
	Thus to obtain our asymptotic formula we will apply \cite[Thm.~I]{Del54}
	to $\alpha(\log x)$ (the factor $1/s$ gives rise to the factor $1/a$ in our statement).
	
	Set $g(s) = \mathfrak{g}(s)/s$. 	
	In the notation of \cite[Thm.~I]{Del54} we take $A=1$, $l$ any positive real number 
	less than $1$ and
	$$\beta(t) = 
	\begin{cases}
		0, & 0 \leq t <1, \\
		g(a)t^{\omega-1}/\Gamma(\omega), & t \geq 1.
	\end{cases}$$
 	In the notation of \emph{loc.~cit.}~we then have
	$G(s-a) = 	(s-a)^{-\omega}g(a) + h(s)$ where $h(s)$ is an entire function
	in $s$ (see \cite[Lem.~4]{Del54}). We need to consider
	$$F(s) = f(s) - G(s-a) = (s-a)^{-\omega}(g(s) - g(a)) - h(s).$$
        This admits a continuous extension to $\re s \geq a$ away from $s = a$, by our assumptions
	on $\mathfrak{g}$.

	We first consider the case where $\omega \geq 1$. In the notation of \cite[Thm.~I]{Del54}
	we take $\gamma(u) = u^{\omega-1}/g(a)$. As $s \to 0$ in the half-plane $\re s >0$  we have
	$$F(s+a) = O\left(\frac{|s|^\delta}{|s|^\omega}\right)=  O\left(\frac{|s|^{\delta-1}}{|s|^{\omega-1}}\right)$$
	thus the hypotheses of \emph{loc. cit.} hold with $\psi(r) = r^{\delta-1}$.	
	
	For the case where $0 < \omega < 1$ we take $\gamma(u) = u^{\omega}/\omega g(a)$.
	The derivative $F'(s)$ admits a continuous extension to $\re s \geq a$ away from $s = a$, 
	by our assumptions. As $s \to 0$ in the half-plane $\re s >0$ we have
	$$F'(s+a) = -\omega s^{-\omega - 1}(g(s+a) - g(a))
          + s^{-\omega}g'(s+a) - h'(s+a) =  O\left(\frac{|s|^{\delta-1}}{|s|^\omega}\right)$$
	thus the hypotheses of \emph{loc. cit.} hold with $\psi(r) = r^{\delta-1}$.	
\end{proof}

The following lemma assists with applications of the Tauberian theorem.

\begin{lemma}\label{lem:elementary_real_analysisWA}
  Suppose that $g(s)$ is holomorphic on $\re s>a$ and
  both $g$ and $g'$ extend to continuous functions on $\re s\geq a$. Define
  \begin{align*}
    g_0 : \RR&\to\CC\\
        t &\mapsto g(a+ti).
  \end{align*}
  Then $g_0$ is differentiable with derivative $g_0'(t)=ig'(a+ti)$.
\end{lemma}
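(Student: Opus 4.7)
My plan is to transfer the holomorphic identity $\tfrac{d}{ds}g(s) = g'(s)$ from the open half-plane to the boundary line by a limit argument, using that both $g$ and $g'$ extend continuously to $\re s \geq a$. The key observation is that on any vertical segment lying strictly to the right of $\re s = a$, the fundamental theorem of calculus for holomorphic functions applies without fuss, and we only need to push this identity to the boundary.

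First, fix $t\in\RR$ and $h\in\RR\setminus\{0\}$. For each $\epsilon > 0$, the function $g$ is holomorphic on an open set containing the segment $\{a+\epsilon+(t+s)i : s\in[0,h]\}$, so integrating $g'$ along this segment gives
\[
g(a+\epsilon+(t+h)i) - g(a+\epsilon+ti) \;=\; \int_0^h i\, g'(a+\epsilon+(t+s)i)\, ds.
\]
Next I would let $\epsilon \to 0^+$. The left-hand side converges to $g_0(t+h) - g_0(t)$ by continuity of $g$ on $\re s \geq a$. For the right-hand side, $g'$ is continuous on the compact rectangle $\{a+\epsilon+(t+s)i : s\in[0,h],\ \epsilon\in[0,1]\}$, hence uniformly continuous there, so the integrand converges uniformly in $s$ and the integrals converge. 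This yields
\[
g_0(t+h) - g_0(t) \;=\; \int_0^h i\, g'(a+(t+s)i)\, ds.
\]

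Finally, I would divide by $h$ and invoke the elementary fact that $\tfrac{1}{h}\int_0^h \phi(s)\,ds \to \phi(0)$ whenever $\phi$ is continuous at $0$, applied to $\phi(s) = i\,g'(a+(t+s)i)$, which is continuous at $s=0$ by hypothesis on $g'$. This produces $g_0'(t) = i\,g'(a+ti)$, as required.

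There is no serious obstacle in this argument; the only point warranting care is the interchange of the $\epsilon \to 0^+$ limit with the integral, and this is handled cleanly by continuity of $g'$ on the compact segment. The lemma is essentially a routine consequence of the assumed boundary continuity of $g$ and $g'$.
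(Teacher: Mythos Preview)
Your proof is correct, and it takes a genuinely different route from the paper's. The paper works with real and imaginary parts separately: writing $g=u+iv$, it decomposes $u(a,t+h)-u(a,t)$ along a three-step staircase path through the interior point $(a+h,t)$ and $(a+h,t+h)$, applies the real mean value theorem on each leg, and then uses continuity of the partial derivatives (which follows from continuity of $g'$) to take $h\to 0^+$; the horizontal contributions cancel in the limit, leaving $u_y(a,t)$. Your argument instead keeps $g$ complex throughout, uses the fundamental theorem of calculus on vertical segments at $\re s=a+\epsilon$, pushes $\epsilon\to 0^+$ via uniform continuity of $g'$ on a compact rectangle, and then differentiates the resulting integral identity by the averaging lemma $\tfrac{1}{h}\int_0^h\phi\to\phi(0)$. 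Your approach is cleaner and avoids splitting into components or tracking several mean-value points; the paper's approach is slightly more elementary in that it never integrates, relying only on the one-variable mean value theorem. Both hinge on the same idea of transferring information from the open half-plane to the boundary via continuity of $g'$.
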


\begin{proof}
  Write $g(x+iy)=u(x,y)+iv(x,y)$. For $t\in\RR$ and $h>0$ the mean value theorem shows that
  \begin{align*}
   \re(g_0(t+h)-g_0(t))&= u(a,t+h)-u(a+h, t+h)\\ &+ u(a+h, t+h)-u(a+h,t)\\ &+
                  u(a+h,t)-u(a,t)\\
    &=-hu_x(a+\xi_1,t+h) + hu_y(a+h,t+\xi_2)\\ &+ hu_x(a+\xi_3,t),
  \end{align*}
  where $\xi_1,\xi_2,\xi_3\in (0,h)$. By the continuity of $g'$ (and thus of $u_x,u_y,v_x,v_y$), we get
  that
  \begin{equation*}
    \lim_{h\to 0^+}\frac{\re(g_0(t+h)-g_0(t))}{h}=-u_x(a,t)+u_y(a,t)+u_x(a,t)=u_y(a,t),
  \end{equation*}
  as desired. Together with the analogous argument for the imaginary part, this shows that
  \begin{align*}
    \lim_{h\to 0^+}\frac{g_0(t+h)-g_0(t)}{h} &= u_y(a,t)+iv_y(a,t) = \lim_{h\to 0}\left(u_y(a+h,t)+iv_y(a+h,t)\right)\\ &= \lim_{h\to 0}ig'(a+h+ti) = ig'(a+ti),
  \end{align*}
  as desired. An analogous argument works for the left-sided limit.  
\end{proof}

\subsection{Bounds for $L$-functions}

Recall that a Dirichlet character of $k$ is a finite order Hecke
character, i.e.\ a continuous homomorphism
$\chi:\Idele/k^*\to S^1$ with
finite image. Let $\mathfrak{f}$ be the conductor of $\chi$, then
$\chi$ defines the $L$-function
$L(s;\chi)=\prod_{v\nmid\mathfrak{f}\infty}(1-\chi(\pi_v)q_v^{-s})^{-1}$
for $\re s>1$. When $\chi\neq 1$, this extends to an entire function
satisfying the usual functional equation.

\begin{lemma}\label{lem:L-boundWA}
  Let $k$ be a number field and $\chi : \Idele/k^* \to S^1$ a non-trivial Dirichlet
  character with conductor $\mathfrak{f}$. Then, for $\re s\geq 1$, we have
  \begin{equation*}
    L(s;\chi) \ll_{k,\epsilon} (N_{k/\QQ}\mathfrak{f}\cdot(1+|\im s|))^\epsilon\quad\text{ and }\quad
    \frac{L'}{L}(s;\chi) \ll_{k,\epsilon} (N_{k/\QQ}\mathfrak{f}\cdot(1+|\im s|))^{\epsilon}.
  \end{equation*}
\end{lemma}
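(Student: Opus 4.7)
The plan is to derive both bounds from the Phragm\'en--Lindel\"of convexity principle applied via Hecke's functional equation, with the logarithmic derivative requiring an additional Borel--Carath\'eodory step combined with the classical zero-free region.

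For the bound on $L(s;\chi)$ itself, I would work on the vertical strip $-\epsilon \leq \re s \leq 1+\epsilon$. On the right edge the Euler product / Dirichlet series converges absolutely and $|L(s;\chi)|$ is dominated by $\zeta_k(1+\epsilon)$, giving a bound depending only on $k$ and $\epsilon$. On the left edge, Hecke's functional equation relates $L(s;\chi)$ to $L(1-s;\bar\chi)$ through archimedean Gamma factors together with a power $(N_{k/\QQ}\mathfrak{f})^{1/2-s}$; applying Stirling's formula to the Gamma ratio yields the convexity bound $\ll_{k,\epsilon} (N_{k/\QQ}\mathfrak{f}\cdot(1+|\im s|))^{1/2+\epsilon}$ there. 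Since $\chi \neq 1$ makes $L(s;\chi)$ entire, Phragm\'en--Lindel\"of interpolates between these two bounds on the strip, and the interpolation weight for the larger bound is arbitrarily small at $\re s = 1$, producing the required $(N_{k/\QQ}\mathfrak{f}\cdot(1+|\im s|))^\epsilon$ on the line $\re s = 1$. For $\re s > 1+\epsilon$ the trivial absolute-convergence estimate is already stronger, so the bound extends to the whole half-plane $\re s \geq 1$.

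For the logarithmic derivative, the approach is to combine the upper bound just obtained with a lower bound on $|L(s;\chi)|$ in a small neighborhood of the line $\re s = 1$, then extract the derivative using Cauchy's formula. The lower bound comes from the classical zero-free region for Hecke $L$-functions: the positivity identity $3 + 4\cos\theta + \cos 2\theta \geq 0$ applied to the Euler product of $\zeta_k(s)^3 |L(s;\chi)|^4 |L(s;\chi^2)|$ shows that $L(s;\chi) \neq 0$ on a region of the shape $\re s \geq 1 - c/\log(N_{k/\QQ}\mathfrak{f}\cdot(2+|\im s|))$, with a polynomial lower bound $|L(s;\chi)| \gg_\epsilon (N_{k/\QQ}\mathfrak{f}\cdot(1+|\im s|))^{-\epsilon}$ in a slightly smaller region. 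On a disk of radius $r \asymp 1/\log(N_{k/\QQ}\mathfrak{f}\cdot(2+|\im s_0|))$ around a point $s_0$ with $\re s_0 \geq 1$, both the Phragm\'en--Lindel\"of upper bound and this lower bound hold uniformly, so the Borel--Carath\'eodory theorem applied to $\log L$, followed by the Cauchy estimate for $(L'/L)(s_0) = (\log L)'(s_0)$, yields a bound of the form $|L'/L(s_0;\chi)| \ll \log^C(N_{k/\QQ}\mathfrak{f}\cdot(1+|\im s_0|))$ for some absolute $C$. Since any fixed power of $\log$ is dominated by $(N_{k/\QQ}\mathfrak{f}\cdot(1+|\im s|))^\epsilon$ for every $\epsilon>0$, this gives the asserted bound.

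The main obstacle is the logarithmic derivative: the bound on $L$ alone is a routine convexity computation, but the derivative bound demands that the non-vanishing of $L(s;\chi)$ be quantitative and uniform in both the conductor $\mathfrak{f}$ and the height $|\im s|$, and the radius of the Borel--Carath\'eodory disk must be chosen so that the upper bound, the lower bound, and the reciprocal of the radius all combine to produce at most a power of $\log$. Checking that the classical zero-free region argument goes through with the required uniformity over all non-trivial Hecke characters of $k$ is the delicate step.
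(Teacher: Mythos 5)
Your convexity argument for the first bound is fine and is essentially what the paper does (it cites Rademacher's estimate, which is exactly this Phragm\'en--Lindel\"of computation). For the second bound your overall strategy (quantitative non-vanishing near $\re s = 1$ plus Borel--Carath\'eodory/Cauchy on small discs) is a legitimate alternative to the paper's route via the Hadamard factorisation, \emph{but there is a genuine gap}: you claim that the lower bound $|L(s;\chi)| \gg_\epsilon (N_{k/\QQ}\mathfrak{f}\cdot(1+|\im s|))^{-\epsilon}$ near $\re s=1$ ``comes from the classical zero-free region'' via the $3+4\cos\theta+\cos 2\theta$ identity. That identity fails to produce a zero-free region near $s=1$ precisely when $\chi$ is a real (quadratic) character, because then $L(s;\chi^2)$ is essentially $\zeta_k(s)$ and its pole at $s=1$ destroys the positivity argument. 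The correct statement of the classical zero-free region allows one possible exceptional real simple zero $\beta$ close to $1$, and at a point $s$ with $\re s = 1$ near $\beta$ the logarithmic derivative contains a term $1/(s-\beta)$ of size up to $1/(1-\beta)$, which the zero-free region alone does not control.

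To bound this term by $(N_{k/\QQ}\mathfrak{f})^{\epsilon}$ you need a Siegel-type theorem for Hecke $L$-functions, namely $1-\beta \gg_{k,\epsilon} (N_{k/\QQ}\mathfrak{f})^{-\epsilon}$ with an ineffective constant; equivalently, your asserted lower bound $|L(1;\chi)|\gg_\epsilon (N_{k/\QQ}\mathfrak{f})^{-\epsilon}$ for quadratic $\chi$ \emph{is} Siegel's theorem and is much deeper than the zero-free region. The paper handles exactly this point by invoking Fogels' result on the exceptional zero (and this is why the implied constants in the lemma are ineffective). Your proof becomes correct once you separate the at most one exceptional zero, treat it with Fogels/Siegel, and apply your Borel--Carath\'eodory argument to $L(s;\chi)/(s-\beta)$ (or restrict the $3$--$4$--$1$ argument to the non-exceptional zeros); as written, the claimed uniform lower bound is unjustified for real characters.
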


\begin{proof}
  The first bound follows from Rademacher's classical convexity estimate, see
  e.g. \cite[III, Theorem 14A]{Moreno}.

  For the second bound, inserting the Hadamard factorisation of the completed $L$-function $\Lambda(s;\chi)$ in
  the functional equation and taking the logarithmic derivative yields the crude bound (see
  \cite[(5.28), (5.31)]{IK})
  \begin{equation*}
    \frac{L'}{L}(s;\chi) = \sum_{|s-\rho|<1}\frac{1}{s-\rho} +
    O_{k,\epsilon}((N_{k/\QQ}\mathfrak{f}\cdot(1+|\im s|))^\epsilon),
  \end{equation*}
  where the sum runs through all zeros $\rho$ of $L(s;\chi)$ with
  $|s-\rho|<1$. By \cite[(5.27)]{IK}, the number of such zeros is
  $\ll_{k,\epsilon}(N_{k/\QQ}\mathfrak{f}\cdot(1+|\im s|))^\epsilon$.
  The classical zero-free region \cite[Theorem 5.35]{IK} implies
  that for each such zero $\rho$, with at most one exception,
  \begin{equation*}
    \frac{1}{s-\rho}\ll_{k,\epsilon}(N_{k/\QQ}\mathfrak{f}\cdot(1+|\im s|))^\epsilon.
  \end{equation*}
  If there is an exceptional zero $\rho$, it is real and satisfies
  \begin{equation*}
    1-\rho \gg_{k,\epsilon}(N_{k/\QQ}\mathfrak{f})^{-\epsilon}
  \end{equation*}
  (with an ineffective implied constant) by \cite{Fogels}. These estimates
  taken together show the required bound for $\frac{L'}{L}(s;\chi)$.
\end{proof}

\section{Proof of Theorem \ref{thm:countWA}}
We now begin the proof of Theorem \ref{thm:countWA}. We may enlarge the finite
set $S$ of places of $k$, and thus assume that it contains all places dividing
$|G|$, enough places to ensure that the ring of $S$-integers $\OO_S$ of $k$ is
a PID, and all places $v$ with small $q_v$, where the meaning of ``small'' will
become clear during the proof.
All implied constants in $\ll$- and $O$-notation are allowed to depend on $k,G,S$.

\subsection{The counting function}\label{sec:f}
We will prove Theorem~\ref{thm:countWA} with $i=1$ and $j=t$. This clearly suffices.

We first translate the statement to a problem on the idele class group using class field theory. For every place $v$ of $k$ we choose a uniformiser $\pi_v \in k_v^\times$.
Write $L=  \langle M, e_1,e_2,\dots,e_{t-1},e_t^{Q}\rangle$. Consider
the function $f(\chi)=\prod_vf_v(\chi_v)$ on the group $\Hom(\Idele,G)$
of continuous homomorphisms from $\Idele$ to $G$, defined as follows: 
 \begin{equation*}
   f_v(\chi_v)=
   \begin{cases}
     1 &\text{ if } v\in S,\\
     1 &\text{ if } v\notin S\text{ and }e_1^{Q^{a_1 - 1}} \in \chi_v(\O_v^*) \implies
     \chi_v(\pi_v)\in L,\\
     0 &\text{ otherwise},
   \end{cases}
 \end{equation*} 
 where $\chi_v\in \Hom(k_v^*,G)$ is the composition of $\chi$ with the natural inclusion $k_v^*\to \Idele$.
 Note that a $G$-extension $\varphi$ of $k$ satisfying $\varphi_v\in\Lambda_v$
 for all $v\in\Omega_k\setminus S$ (as in Theorem~\ref{thm:countWA} with $i=1$
 and $j=t$) corresponds via class field theory to a continuous 
 surjective homomorphism $\chi\in\Hom(\Idele,G)$ such that $e_1^{Q^{a_1 - 1}}
 \in \chi_v(\O_v^*)$ implies $\chi_v(\pi_v\O_v^*)\subset L$ for all
 $v\in\Omega_k\setminus S$. The set of such $G$-extensions is contained in the
 set of those that correspond to continuous surjective homomorphisms $\chi\in\Hom(\Idele,G)$ with $f(\chi)=1$. These are counted by the function
 \begin{equation*}
   N^*(G,L;B) = \#\{\chi\in\Hom(\Idele/k^*,G)\ :\ \chi \text{
   surjective, } \Delta(\chi)\leq B,\ f(\chi)=1\}.
\end{equation*}

 Therefore, comparing with the total number of $G$-extensions of $k$ with discriminant bounded by $B$ (see~\cite[Theorem~1.7]{HNP}),
 it suffices to show that
\begin{equation}\label{eq:goalWA}
  N^*(G, L;B) = o(B^{1/\alpha(G)}(\log B)^{\nu(k,G)-1}),
\end{equation}
where $\alpha(G)=|G|(1-Q^{-1})$ and $\nu(k,G)=(\lvert
G[Q]\rvert-1)/[k(\mu_Q):k]$.

For any subgroup $H\subseteq G$ and $\chi:\Idele/k^*\to H$, recall the
definition of $\Phi_H(\chi)$ from \cite[\S 2]{HNP}, in particular that
$\Phi_H(\chi)=\Delta(\chi)^{|H|/|\im\chi|}$, and set
\begin{equation*}
  N(H, L;B)=\#\{\chi\in\Hom(\Idele/k^*,H)\ :\ \Phi_H(\chi)\leq B,\ f(\chi)=1\}.
\end{equation*}
Then
\begin{equation*}
  N(H,L;B)=\sum_{J\subseteq H}N^*(J,L; B^{|J|/|H|}),
\end{equation*}
and thus by M\"obius inversion as in \cite[\S 2]{HNP} and \cite[\S 2]{Wri89},
\begin{equation}\label{eq:sum_N_over_subgroupsWA}
  N^*(G,L; B) = \sum_{H\subseteq G}\mu(G/H)N(H,L; B^{|H|/|G|}).
\end{equation}

For a subgroup $H\subseteq G$, we let
$\beta_H$ be its $\FF_Q$-rank, i.e. the dimension of the $\FF_Q$-vector space
$H\otimes_\ZZ\FF_Q$. If $\beta_H<\beta_G$, then $\nu(k,H)<\nu(k,G)$ and
\begin{equation*}
  N(H,L;B^{|H|/|G|})\leq N(H; B^{|H|/|G|}) = O(B^{1/\alpha(G)}(\log B)^{\nu(k,H)-1})
\end{equation*}
by \cite[Lemma 4.7]{HNP},
where $N(H;B)=\#\{\chi\in\Hom(\Idele/k^*,H)\ :\ \Phi_H(\chi)\leq B\}$. Hence,
it suffices to consider subgroups $H$ with $\beta_H=\beta_G$ in
\eqref{eq:sum_N_over_subgroupsWA}.

\subsection{Case 1: $a_t=1$.}\label{sec:easy}
In this case $L=\langle M, e_1,e_2,\dots,e_{t-1}\rangle$. Let $H$ be a subgroup
of $G$ with $\beta_H=\beta_G$.
We show that
the individual counting function in~\eqref{eq:sum_N_over_subgroupsWA} satisfies
$N(H,L;B^{|H|/|G|})= o(B^{1/\alpha(G)}(\log B)^{\nu(k,G)-1})$.

For any finite set $T\supset S$ of places of $k$, we consider the truncation
of $f$,
\begin{equation*}
  f_T(\chi)=\prod_{v\in T}f_v(\chi_v),
\end{equation*}
 with corresponding counting
function
\begin{equation*}
  N_T(H,L;B)=\#\{\chi\in\Hom(\Idele/k^*, H)\ :\ \Phi_H(\chi)\leq
  B,\ f_{T}(\chi)=1\}.
\end{equation*}
Note that the counting function $N_T$ imposes local conditions at only finitely
many places, so the main counting results of \cite{HNP} apply. Hence,
\cite[Lemma 4.7]{HNP} shows that
\begin{equation*}
  \lim_{B\to\infty}\frac{N(H,L;B^{|H|/|G|})}{B^{1/\alpha(G)}(\log B)^{\nu(k,G)-1}} \leq  \lim_{B\to\infty}\frac{N_T(H,L;B^{|H|/|G|})}{B^{1/\alpha(G)}(\log B)^{\nu(k,G)-1}}=:c_{H,T},
\end{equation*}
with a constant $c_{H,T}\geq 0$. We will show that
$\lim_{T\to\Omega_k}c_{H,T}=0$, which is enough to ensure that
$N(H,L; B^{|H|/|G|})=o(B^{1/\alpha(G)}(\log B)^{\nu(k,G)-1})$.

Let us recall some notation from \cite{HNP}. We write $k_0:=k(\mu_Q)$. For
$\re(s)>1$, we set $\zeta_{k_0,v}(s):=1$ for archimedean $v$, and
$\zeta_{k_0,v}(s):=\prod_{w\mid v}(1-q_w^{-s})^{-1}$, the product of the local
factors of the Dedekind
zeta function of $k_0$ at all places $w$ above $v$, for non-archimedean $v$. Moreover,
$\widehat{1}_v(\cdot;s)$ is the Fourier transform of the function
$1/\Phi_H(\cdot)^s$ on $\Hom(k_v^*,H)$, and $\widehat{f}_{H,v}(\cdot;s)$ is the
Fourier transform of $f_v(\cdot)/\Phi_H(\cdot)^s$. As in \cite[(4.13)]{HNP}, we
have
\begin{equation}\label{eq:constant_bound}
  c_{H,T}\ll \lim_{s\to 1/\alpha(G)^+}\sum_{x\in k^*\otimes\dual H}\prod_{v\in T}\frac{\widehat{f}_{H,v}(x_v;s|G|/|H|)}{\zeta_{k_0,v}(\alpha(G)s)^{\nu(k,G)}}\prod_{v\notin
  T}\frac{\widehat{1}_v(x_v;s|G|/|H|)}{\zeta_{k_0,v}(\alpha(G)s)^{\nu(k,G)}}.
\end{equation}

For $x_v\in\O_v^{*} \otimes \dual{H}$, we write $x_v\in\O_v^{*Q} \otimes \dual{H}$ to say that $x_v$ is in the image of $\O_v^{*Q} \otimes \dual{H}$ in $\O_v^{*} \otimes \dual{H}$ under the natural map (which is not injective in general).

By \cite[Lemma 4.1]{HNP}, for $v\notin S$, $x_v\in\OO_v^*\otimes\dual{H}$, 
and $\re(s)\geq 0$
\begin{align*}
  & \widehat{1}_{v}(x_v;s|G|/|H|)= \\
  & \begin{cases}
    1+(Q^{\beta_G}\mathbbm{1}_{\OO_v^{*Q}\otimes
      \dual{H}}(x_v)-1)q_v^{-\alpha(G)s} + O(q_v^{-(\alpha(G)+1)s}), &
    q_v\equiv 1\bmod Q,\\
    1+ O(q_v^{-(\alpha(G)+1)s}), &q_v\not\equiv 1\bmod Q.
  \end{cases}
\end{align*}
Moreover, \cite[Lemma 3.4]{HNP} shows that
\begin{equation}\label{eq:trivial_FT_zero}
\widehat{1}_{v}(x_v;s|G|/|H|)=0\quad\text{ for }\quad x_v\notin\OO_v^*\otimes\dual{H}.
\end{equation}

For a subgroup $A$ of $k_v^*$, a subgroup $R$ of $H$, and $x_v\in k_v^*\otimes \dual{H}$, we abuse notation by writing $x_v\in A\otimes \dual{R}$ to mean that the image of $x_v$ under the natural map $k_v^*\otimes \dual{H}\to k_v^*\otimes \dual{R}$ is in the image of $A\otimes \dual{R}$ under the natural map $A\otimes \dual{R}\to k_v^*\otimes\dual{R}$.

For $\widehat{f}_{H,v}$, we have the
following result. 

\begin{lemma}\label{lem:missing_h_local_factorWA}
  Let $\re(s)\geq 0$. Write $L_H = L\cap H$ and write $V$ for the subgroup of order $Q$ of $G$ generated by $e_1^{Q^{a_1-1}}$.
  Let $v\notin S$ and 
  $x_v\in k_v^*\otimes\dual{H}$. 
  If $q_v\not\equiv 1\bmod Q$, then
  \begin{equation}\label{eq:case_1_lemma_not_one}
    \widehat{f}_{H,v}(x_v;s|G|/|H|) =  \mathbbm{1}_{\OO_v^*\otimes\dual{H}}(x_v)+ O(q_v^{-(\alpha(G)+1)s}).
  \end{equation}
  If $q_v\equiv 1\bmod Q$, then $\widehat{f}_{H,v}(x_v;s|G|/|H|)$ is equal to
  \begin{align*}
     \mathbbm{1}_{\OO_v^*\otimes\dual{H}}(x_v) + q_v^{-\alpha(G)s}\Bigl(&\mathbbm{1}_{\OO_v^*\otimes\dual{H}}(x_v)\Bigl(Q^{\beta_G}\cdot\mathbbm{1}_{\OO_v^{*Q}\otimes\dual{H}}(x_v)-Q\cdot\mathbbm{1}_{\OO_v^{*Q}\otimes\dual{ V}}(x_v)\Bigr)\\ &+ \mathbbm{1}_{\OO_v^*\otimes\dual{L_H}}(x_v)\Bigl(\mathbbm{1}_{\OO_v^{*Q}\otimes\dual{V}}(x_v)-\frac{1}{Q}\Bigr)\Bigr)+O(q_v^{-(\alpha(G)+1)s}).
  \end{align*}
Moreover, if $x_v\notin \OO_v^*\otimes\dual{L_H}$ then
$\widehat{f}_{H,v}(x_v;s) = 0$.
\end{lemma}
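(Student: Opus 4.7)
The approach is a direct Fourier computation, building on the formula for $\widehat{1}_v$ from \cite[Lemma 4.1]{HNP}.

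When $q_v\not\equiv 1\pmod Q$: since $v\nmid|G|$, every continuous $\chi_v|_{\O_v^*}\colon \O_v^*\to H$ factors through the tame quotient $\F_{q_v}^*$, so $\chi_v(\O_v^*)$ is cyclic of order dividing $\gcd(q_v-1,|H|)$, which is coprime to $Q$. Hence the order-$Q$ element $e_1^{Q^{a_1-1}}$ is never in $\chi_v(\O_v^*)$, the implication defining $f_v$ is vacuous, $f_v\equiv 1$, and \eqref{eq:case_1_lemma_not_one} follows directly from \cite[Lemma 4.1]{HNP}.

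When $q_v\equiv 1\pmod Q$: first observe that $\beta_H=\beta_G$ forces $H[Q]=G[Q]$, since both are $\F_Q$-subspaces of $G[Q]$ of the same dimension; in particular $V\subseteq L\cap H=L_H$, so the notations $\dual V$ and $\dual{L_H}$ are legitimate in the paper's abuse of notation. Using that $\chi_v(\pi_v)\in L\Leftrightarrow \chi_v(\pi_v)\in L_H$, I would decompose
\[
f_v = 1 - \mathbbm{1}\!\left[e_1^{Q^{a_1-1}}\in\chi_v(\O_v^*)\right] + \mathbbm{1}\!\left[e_1^{Q^{a_1-1}}\in\chi_v(\O_v^*)\right]\mathbbm{1}\!\left[\chi_v(\pi_v)\in L_H\right],
\]
and invoke linearity of the Fourier transform. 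Expand each indicator by Pontryagin duality: $\mathbbm{1}[\chi_v(\pi_v)\in L_H]=|H/L_H|^{-1}\sum_{\psi\in \dual{(H/L_H)}}\psi(\chi_v(\pi_v))$ comes from orthogonality on $H/L_H$, while $\mathbbm{1}[e_1^{Q^{a_1-1}}\in\chi_v(\O_v^*)]$ is handled by M\"obius inversion on the subgroup lattice of the cyclic tame image, which amounts to testing whether $\chi_v|_{\O_v^*}$ factors through the order-$Q$ quotient $\F_{q_v}^*/\F_{q_v}^{*Q}$. Substituting and repeating the leading-order analysis of \cite[Lemma 4.1]{HNP}, one extracts the coefficient of $q_v^{-\alpha(G)s}$: the term $\mathbbm{1}_{\O_v^{*Q}\otimes \dual H}$ is inherited from $\widehat 1_v$; the two $\mathbbm{1}_{\O_v^{*Q}\otimes \dual V}$ contributions arise as the Fourier dual of the condition $e_1^{Q^{a_1-1}}\in\chi_v(\O_v^*)$, which only sees characters through their restriction to $V$; and $\mathbbm{1}_{\O_v^*\otimes\dual{L_H}}$ comes from the $L_H$-orthogonality. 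All higher-order contributions should be absorbed into $O(q_v^{-(\alpha(G)+1)s})$. The main technical obstacle will be the bookkeeping in this last step: making the constants $Q^{\beta_G}$, $-Q$ and $-1/Q$ come out exactly requires careful control of the sizes $|H/L_H|$, $|V|=Q$ and $|\F_{q_v}^*/\F_{q_v}^{*Q}|=Q$, and of the interplay between the tame and ramified Fourier expansions.

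The ``moreover'' clause is best obtained not from the formula but by invariance. The function $f_v$ is invariant under translation by any $\chi_v'\in\Hom(k_v^*,H)$ with $\chi_v'|_{\O_v^*}=1$ and $\chi_v'(\pi_v)\in L_H$, since such a twist preserves both $\chi_v(\O_v^*)$ and the class of $\chi_v(\pi_v)$ modulo $L_H$, hence preserves both the hypothesis and the conclusion of the defining implication. These twists form a subgroup of $\Hom(k_v^*,H)$ whose Pontryagin annihilator inside $k_v^*\otimes\dual H$ is precisely $\{x_v:x_v\in \O_v^*\otimes\dual{L_H}\}$ in the paper's abuse of notation, and $\widehat{f}_{H,v}$ must vanish off this annihilator.
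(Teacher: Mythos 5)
Your proposal is correct. For the central computation ($q_v\equiv 1\bmod Q$) it is essentially the paper's argument in different clothing: the paper expands $\widehat{f}_{H,v}$ via \cite[Lemma~3.3]{HNP} as a sum over ramification levels $l\mid(\exp(H),q_v-1)$, keeps $l\in\{1,Q\}$, and splits the $l=Q$ character sum according to whether $\im(\chi_v|_{\OO_v^*})$ equals $V$ or not --- which is exactly your decomposition $f_v=1-\mathbbm{1}_A+\mathbbm{1}_A\mathbbm{1}_B$ pushed through linearity of the Fourier transform. Your deferred bookkeeping does close: the $-1$ in the $l=Q$ coefficient $(Q^{\beta_G}\mathbbm{1}_{\OO_v^{*Q}\otimes\dual{H}}-1)$ of $\widehat{1}_v$ cancels against the $+1$ in the count $Q\,\mathbbm{1}_{\OO_v^{*Q}\otimes\dual{V}}-1$ of surjections onto $V$, and the factor $|L_H|/|H|=1/Q$ (valid because $\beta_H=\beta_G$ forces $e_t\in H$, hence $[H:L_H]=[G:L]=Q$) produces the $-1/Q$. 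Where you genuinely depart from the paper is in the two remaining claims, and both of your alternatives are sound and cleaner. For $q_v\not\equiv 1\bmod Q$ you observe that the tame image has order prime to $Q$, so the implication defining $f_v$ is vacuous, $f_v\equiv 1$, and \eqref{eq:case_1_lemma_not_one} is inherited from $\widehat{1}_v$; the paper instead re-expands and bounds every $l>1$ term. For the vanishing off $\OO_v^*\otimes\dual{L_H}$ you use invariance of $f_v\Phi_H^{-s}$ under translation by $\Hom(k_v^*/\OO_v^*,L_H)$ (note that you implicitly also need $\Phi_H$ to be invariant under unramified twists, which it is), together with the fact that the Fourier transform is then supported on the annihilator of this subgroup, which is exactly the stated set; the paper instead inspects each $l>Q$ term and finds a factor $\mathbbm{1}_{\OO_v^*\otimes\dual{L_H}}(x_v)$ or $\mathbbm{1}_{\OO_v^*\otimes\dual{H}}(x_v)$ in every inner sum. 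Your route is more conceptual; the paper's explicit version has the minor benefit of exhibiting the higher-order terms, which is not needed here.
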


\begin{remark}
  Note that since $\beta_H=\beta_G$, the $Q$-torsion subgroup $G[Q]$ is contained in $H$ and hence $V\subseteq H$. 
\end{remark}

\begin{proof}[Proof of Lemma~\ref{lem:missing_h_local_factorWA}]
 Writing $k_v^*=\OO_v^*\oplus\langle\pi_v\rangle$,
  we identify $k_v^*/\OO_v^*$ with $\langle\pi_v\rangle$ and hence
  $\Hom(k_v^*,H)$ with $\Hom(\OO_v^*,H)\oplus\Hom(k_v^*/\OO_v^*,H)$. By \cite[Lemma 3.3]{HNP},
\[\widehat{f}_{H,v}(x_v;s|G|/|H|)=\sum_{l\mid (\exp(H),q_v-1)}\left(\sum_{\substack{\chi_v\in\Hom(\OO_v^*, H)\\ \ker\chi_v=\OO_v^{*l}}}\pair{\chi_v}{x_v}\tau_{f_v}(\chi_v,x_v)\right)q_v^{-|G|(1-1/l)s},\]
where 
\[ \tau_{f_v}(\chi_v, x_v) := \frac{1}{|H|}\sum_{\psi_v \in \Hom(k_v^*/\OO_v^*, H)}f_v(\chi_v\psi_v) \pair{\psi_v}{x_v}.\]
Taking $l=1$ gives $\mathbbm{1}_{\OO_v^*\otimes\dual{H}}(x_v)$. If
$q_v\not\equiv 1\bmod{Q}$, then all other terms are $O(q_v^{-(\alpha(G)+1)s})$, 
so we have shown \eqref{eq:case_1_lemma_not_one}.

Suppose that $q_v\equiv 1\bmod{Q}$. Then the contribution from $l=Q$ is
\begin{align}
&q_v^{-\alpha(G)s}\frac{1}{|H|}\sum_{\substack{\chi_v\in\Hom(\OO_v^*, H)\\ \ker\chi_v=\OO_v^{*Q}}}\pair{\chi_v}{x_v}\sum_{\psi_v \in \Hom(k_v^*/\OO_v^*, H)}f_v(\chi_v\psi_v) \pair{\psi_v}{x_v}\nonumber\\
\label{eq:e1} =&q_v^{-\alpha(G)s}\frac{1}{|H|}\sum_{\chi_v:\OO_v^*/\OO_v^{*Q}\hookrightarrow V}\pair{\chi_v}{x_v}\sum_{\psi_v \in \Hom(k_v^*/\OO_v^*, H)}f_v(\chi_v\psi_v) \pair{\psi_v}{x_v}\\
\label{eq:note1}&+q_v^{-\alpha(G)s}\frac{1}{|H|}\sum_{\substack{\chi_v:\OO_v^*/\OO_v^{*Q}\hookrightarrow H\\ \im\chi_v\not\subset V}}\pair{\chi_v}{x_v}\sum_{\psi_v \in \Hom(k_v^*/\OO_v^*, H)}f_v(\chi_v\psi_v) \pair{\psi_v}{x_v}.
\end{align}
By the definitions of $f_v$ and $L_H$, 
and our identification 
$k_v^*/\OO_v^*=\langle \pi_v\rangle$,  \eqref{eq:e1} is equal to
\begin{align}
&q_v^{-\alpha(G)s}\frac{1}{|H|}\sum_{\chi_v:\OO_v^*/\OO_v^{*Q}\hookrightarrow V}\pair{\chi_v}{x_v}\sum_{\psi_v \in \Hom(k_v^*/\OO_v^*, L_H)}\pair{\psi_v}{x_v}\nonumber\\
=&q_v^{-\alpha(G)s}\frac{|L_H|}{|H|}\mathbbm{1}_{\OO_v^*\otimes\dual{L_H}}(x_v)\sum_{\chi_v:\OO_v^*/\OO_v^{*Q}\hookrightarrow V}\pair{\chi_v}{x_v}\nonumber\\
=&q_v^{-\alpha(G)s}\frac{1}{Q}\mathbbm{1}_{\OO_v^*\otimes\dual{L_H}}(x_v)\Bigl(-1+\sum_{\chi_v:\OO_v^*/\OO_v^{*Q}\to V}\pair{\chi_v}{x_v}\Bigr)\nonumber\\
\label{eq:e12}=&q_v^{-\alpha(G)s}\frac{1}{Q}\mathbbm{1}_{\OO_v^*\otimes\dual{L_H}}(x_v)\Bigl(-1+Q\cdot\mathbbm{1}_{\OO_v^{*Q}\otimes\dual{V}}(x_v)\Bigr).
\end{align}

Furthermore, by the definition of $f_v$, \eqref{eq:note1} is equal to
\begin{align}
&q_v^{-\alpha(G)s}\frac{1}{|H|}\sum_{\substack{\chi_v:\OO_v^*/\OO_v^{*Q}\hookrightarrow H\\ \im\chi_v\not\subset V}}\pair{\chi_v}{x_v}\sum_{\psi_v \in \Hom(k_v^*/\OO_v^*, H)}\pair{\psi_v}{x_v}\nonumber\\
=&q_v^{-\alpha(G)s}\mathbbm{1}_{\OO_v^*\otimes\dual{H}}(x_v)\sum_{\substack{\chi_v:\OO_v^*/\OO_v^{*Q}\to H\\ \im\chi_v\not\subset V}}\pair{\chi_v}{x_v}\nonumber\\
=&q_v^{-\alpha(G)s}\mathbbm{1}_{\OO_v^*\otimes\dual{H}}(x_v)\Biggl(\sum_{\chi_v:\OO_v^*/\OO_v^{*Q}\to H}\pair{\chi_v}{x_v}-\sum_{\chi_v:\OO_v^*/\OO_v^{*Q}\to V}\pair{\chi_v}{x_v}\Biggr)\nonumber\\
\label{eq:note12}=&q_v^{-\alpha(G)s}\mathbbm{1}_{\OO_v^*\otimes\dual{H}}(x_v)\Biggl(Q^{\beta_G}\cdot\mathbbm{1}_{\OO_v^{*Q}\otimes\dual{H}}(x_v)-Q\cdot\mathbbm{1}_{\OO_v^{*Q}\otimes\dual{ V}}(x_v)\Biggr).
\end{align}
Taking the sum of \eqref{eq:e12} and \eqref{eq:note12} completes the proof of
the formula for $\widehat{f}_{G,v}(x_v;s)$. To see that $x_v\notin
\OO_v^*\otimes\dual{L_H}$ implies $\widehat{f}_{H,v}(x_v;s|G|/|H|) = 0$, we
also have to consider the remaining terms, i.e.~$l|(\exp(H),q_v-1)$ with $l>Q$.

Similarly to the calculations above, the contribution from $l$ is
\begin{align}
&q_v^{-\alpha(G)s}\frac{1}{|H|}\sum_{\substack{\chi_v\in\Hom(\OO_v^*, H)\\ \ker\chi_v=\OO_v^{*l}}}\pair{\chi_v}{x_v}\sum_{\psi_v \in \Hom(k_v^*/\OO_v^*, H)}f_v(\chi_v\psi_v) \pair{\psi_v}{x_v}\nonumber\\
\label{eq:e1l} =&q_v^{-\alpha(G)s}\frac{1}{|H|}\sum_{\substack{\chi_v\in\Hom(\OO_v^*, H)\\ \ker\chi_v=\OO_v^{*l}\\ V\subset \im\chi_v}}\pair{\chi_v}{x_v}\sum_{\psi_v \in \Hom(k_v^*/\OO_v^*, H)}f_v(\chi_v\psi_v) \pair{\psi_v}{x_v}\\
\label{eq:note1l}&+q_v^{-\alpha(G)s}\frac{1}{|H|}\sum_{\substack{\chi_v\in\Hom(\OO_v^*, H)\\ \ker\chi_v=\OO_v^{*l}\\ V\not\subset\im\chi_v}}\pair{\chi_v}{x_v}\sum_{\psi_v \in \Hom(k_v^*/\OO_v^*, H)}f_v(\chi_v\psi_v) \pair{\psi_v}{x_v}.
\end{align}

By the definition of $f_v$, the inner sum in~\eqref{eq:e1l} becomes
\[\sum_{\psi_v \in \Hom(k_v^*/\OO_v^*, L_H)} \pair{\psi_v}{x_v}=|L_H|\cdot \mathbbm{1}_{\OO_v^*\otimes\dual{L_H}}(x_v),\]
which is zero for $x_v\notin\OO_v^*\otimes\dual{L_H}$.
The inner sum in~\eqref{eq:note1l} becomes 
\[\sum_{\psi_v \in \Hom(k_v^*/\OO_v^*, H)} \pair{\psi_v}{x_v}=|H|\cdot\mathbbm{1}_{\OO_v^*\otimes\dual{H}}(x_v),\]
which is zero unless $x_v\in\OO_v^*\otimes\dual{H}$, which is
even stronger than $x_v\in\OO_v^*\otimes\dual{L_H}$.
\end{proof}

By \eqref{eq:trivial_FT_zero}, the summand in \eqref{eq:constant_bound} is zero
unless $x\in\OO_T^*\otimes\dual{H}$. Hence, the sum over $x$ is finite and
\begin{equation}\label{eq:constant_bound_2}
    c_{H,T}\ll \sum_{x\in \OO_T^*\otimes\dual H}\prod_{v\in
      T}\frac{\widehat{f}_{H,v}(x_v;1/\alpha(H))}{\zeta_{k_0,v}(1)^{\nu(k,G)}}\lim_{s\to
      1/\alpha(G)^+}\prod_{v\in\Omega_k\smallsetminus T}\frac{\widehat{1}_v(x_v;s|G|/|H|)}{\zeta_{k_0,v}(\alpha(G)s)^{\nu(k,G)}}.
  \end{equation}
  
  We now
  define $k_x/k_0$ to be the elementary abelian $Q$-extension associated to
  $x\in k^*\otimes\dual{H}$ as in~\cite[Section~4.4]{HNP}. Explicitly, taking a
  presentation of the $Q$-primary part of $H$ as a direct sum of cyclic groups, we can write   
\[k^*\otimes \dual{H}=k^*\otimes \dual{D}\times  k^*/k^{*Q^{c_1}}\times \dots \times  k^*/k^{*Q^{c_t}}\]
for some subgroup $D$ of $M$ and $c_1,\dots, c_t\in \ZZ_{>0}$. We write $x\in \OO_T^*\otimes \dual{H}$ as $x=(y,x_1,\dots, x_t)$ with $y\in \OO_T^*\otimes \dual{D}$ and $x_i\in \OO_T^*/\OO_T^{*Q^{c_i}}$ for $1\leq i\leq t$. Then
 \[k_x=k_0(\sqrt[Q]{x_1},\dots, \sqrt[Q]{x_t}).\]
 With this setup, it follows from \cite[Lemma 4.5]{HNP} that the function
 \begin{equation*}
 \widehat{1}_T(x;s|G|/|H|)=\prod_{v\in\Omega_k\smallsetminus
   T}\widehat{1}_v(x_v;s|G|/|H|)
\end{equation*}
 has
 a pole of order strictly smaller than
 $\nu(k,G)=(Q^{\beta_G}-1)/[k_0:k]$  at $s=1/\alpha(G)$, unless $k_x=k_0$. Hence, the summand in
 \eqref{eq:constant_bound_2} vanishes unless $k_x=k_0$. 

 \begin{lemma}
   For $x\in k^*\otimes \dual{H}$, we have $k_x=k_0$ if and only if $x\in
   k^{*Q}\otimes \dual{H}$.
 \end{lemma}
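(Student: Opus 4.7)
The plan is to reduce the claim to a small Galois-theoretic fact: if $z\in k^*$ is a $Q$-th power in $k_0=k(\mu_Q)$, then already $z\in k^{*Q}$. Granting this, one can read off the equivalence from the explicit description $k_x = k_0(\sqrt[Q]{x_1},\dots,\sqrt[Q]{x_t})$ together with a factor-by-factor analysis of the decomposition
\[k^*\otimes\dual{H}=k^*\otimes\dual{D}\times k^*/k^{*Q^{c_1}}\times\dots\times k^*/k^{*Q^{c_t}}.\]
On the $\dual{D}$ factor, the natural map $k^{*Q}\otimes\dual{D}\to k^*\otimes\dual{D}$ is surjective because its cokernel $(k^*/k^{*Q})\otimes\dual{D}$ has exponent dividing both $Q$ and $|D|$, hence vanishes, so the $y$-component lies automatically in $k^{*Q}\otimes\dual{D}$. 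On each $\mu_{Q^{c_i}}$-factor, being in the image of $k^{*Q}$ just means $x_i\in k^*/k^{*Q^{c_i}}$ admits a representative in $k^{*Q}$.

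The condition $k_x=k_0$ is equivalent to each $x_i$ being a $Q$-th power in $k_0^*$. One direction is immediate: if each $x_i$ has a representative of the form $z_i^Q$ in $k^*$, then one may choose $\sqrt[Q]{x_i}=z_i\in k\subseteq k_0$, giving $k_x=k_0$.

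For the converse, fix representatives $x_i\in k^*$ and assume each is a $Q$-th power in $k_0^*$. Then $k(\sqrt[Q]{x_i})\subseteq k_0$. Since $Q$ is prime, the minimal polynomial of $\sqrt[Q]{x_i}$ over $k$ divides $T^Q-x_i$ and must be either linear or equal to $T^Q-x_i$; so $[k(\sqrt[Q]{x_i}):k]\in\{1,Q\}$. On the other hand, this degree divides $[k_0:k]$, which divides $\phi(Q)=Q-1$. Since $\gcd(Q,Q-1)=1$, the degree must be $1$, whence $x_i\in k^{*Q}$, as required.

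The key input — not really an obstacle, but the one substantive step — is the last Galois-theoretic observation: a degree-$Q$ Kummer-style extension of $k$ cannot sit inside $k(\mu_Q)$, because $[k(\mu_Q):k]$ is coprime to $Q$. Everything else is bookkeeping around the decomposition of $\dual H$ and the surjectivity of $k^{*Q}\otimes\dual D\to k^*\otimes\dual D$ forced by $\gcd(|D|,Q)=1$.
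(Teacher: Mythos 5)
Your proof is correct and follows essentially the same route as the paper: dispose of the $\dual{D}$-factor by coprimality of $Q$ and $|D|$, and for each $x_i$ observe that a $Q$-th root lying in $k_0$ forces $[k(\sqrt[Q]{x_i}):k]$ to divide $[k_0:k]\mid Q-1$, which is incompatible with $T^Q-x_i$ being irreducible over $k$, and hence (as $Q$ is prime) forces $x_i\in k^{*Q}$. One small imprecision: the dichotomy is not that the minimal polynomial of the chosen root is ``linear or all of $T^Q-x_i$'' --- when $x_i\in k^{*Q}$ the chosen root can have any degree dividing $Q-1$ (e.g.\ a primitive $Q$-th root of unity when $x_i=1$) --- rather, the correct statement for prime $Q$ is that $T^Q-x_i$ is irreducible unless $x_i\in k^{*Q}$, which gives the same contradiction and the same conclusion.
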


 \begin{proof}
   The ``if'' follows directly from the definition of $k_x$. For the ``only
   if'', 
   suppose $k_x=k_0$, let $i\in\{1,\ldots,t\}$ and fix a
   representative of $x_i$.
   The
   polynomial
   $X^Q-x_i$
   cannot be irreducible over $k$, as
   it has roots in $k_0$ and
   $[k_0:k]\leq Q-1$. As $Q$ is
   prime, this already implies
   $x_i\in k^{*Q}$. Moreover,
   for the first factor $k^*\otimes \dual{D}$ of $k^*\otimes \dual{H}$, we see that every $y\in k^*\otimes \dual{D}$ is a
   $Q$-th power, as $(Q,|D|)=1$.
 \end{proof}
Given $x\in k^{*Q}\otimes\dual{H}$, let $v\in \Omega_k\smallsetminus S$. By Lemma
 \ref{lem:missing_h_local_factorWA}, we have $\widehat{f}_{H,v}(x_v;s)=0$ unless
 $x_v\in\OO_v^{*}\otimes\dual{L_H}$.
 Since $\beta_H=\beta_G$, the $Q$-torsion
 subgroup $G[Q]$ is contained in $H$. Since $a_t=1$, this implies that $e_t\in
 H$ and $H=L_H\oplus \langle e_t\rangle$.
 Therefore, as $e_t$ has order $Q$ and
 $x\in k^{*Q}\otimes\dual{H}$, the condition that $x_v\in
 \OO_v^{*}\otimes\dual{L_H}$  is equivalent to 
 $x_v\in \OO_v^{*Q}\otimes\dual{H}$.
Therefore, we may further restrict the sum in \eqref{eq:constant_bound_2} to the finite
set $\OO_S^{*Q}\otimes\dual H$, which is independent of $T$. This yields
\begin{equation*}
  c_{H,T}\ll \sum_{x\in\OO_S^{*Q}\otimes\dual H}\prod_{v\in
    T\smallsetminus
    S}\frac{\widehat{f}_{H,v}(x_v;1/\alpha(H))}{\widehat{1}_v(x_v;1/\alpha(H))}\lim_{s\to
    1/\alpha(G)^+}\prod_{v\in\Omega_k\smallsetminus S}\frac{\widehat{1}_v(x_v;s|G|/|H|)}{\zeta_{k_0,v}(\alpha(G)s)^{\nu(k,G)}}.
\end{equation*}
Now observe from Lemma~\ref{lem:missing_h_local_factorWA} and the description
of $\widehat{1}_{v}(x_v;s|G|/|H|)$ preceding
Lemma~\ref{lem:missing_h_local_factorWA}, that 
\begin{align*}
 &\prod_{v\in
  T\smallsetminus
  S}\frac{\widehat{f}_{H,v}(x_v;1/\alpha(H))}{\widehat{1}_v(x_v;1/\alpha(H))} =
  \\
  &\prod_{\substack{v\in T\smallsetminus S\\q_v\not\equiv
  1\bmod Q}}\left(1+O(q_v^{-1-1/\alpha(G)})\right)
  \cdot \prod_{\substack{v\in
T\smallsetminus S\\q_v\equiv 1\bmod Q}}\left(1-(Q-2+1/Q)q_v^{-1} + O(q_v^{-1-1/\alpha(G)})\right)
\end{align*}
for $x\in\OO_S^{*Q}\otimes\dual{H}$. With, e.g., the Chebotarev density theorem, this shows that
$\lim_{T\to\Omega_k}c_{H,T}=0$, as desired. We have thus proved~\eqref{eq:goalWA}, and hence Theorem~\ref{thm:countWA}, in the case $a_t=1$.

\subsection{Case 2: $a_t\geq 2$.}
This is the hard case, where the individual counting functions
in~\eqref{eq:sum_N_over_subgroupsWA} are not
$o(B^{1/\alpha(G)}(\log B)^{\nu(k,G)-1})$. Instead, we match up the subgroups
$H$ giving the highest-order contribution in~\eqref{eq:sum_N_over_subgroupsWA}
and show cancellation between them. We already observed at the end of
Section~\ref{sec:f} that these subgroups $H$ have maximal $\FF_Q$-rank, i.e.\
$\beta_H=\beta_G$. Hence, it is enough to consider subgroups contained in the set
\begin{equation*}
W:=\{H\subseteq G\ :\ \beta_H=\beta_G,\ \mu(G/H)\neq 0\}.
\end{equation*}
Recall that $L=\langle M, e_1,e_2,\dots,e_{t-1},e_t^{Q}\rangle$ and partition $W$ into the
sets 
\begin{equation*}
 W_1:=\{H\in W\ :\ H\not\subseteq L\}\quad \text{ and }\quad W_2:=\{J\in W\ :\ J\subseteq L\}.
\end{equation*}
\begin{lemma}\label{lem:pairH}
\hfill
\begin{enumerate}
\item\label{item:quotients} For $H\in W_1$, we have $H/(H\cap L)\cong \ZZ/Q\ZZ$ and $L/(H\cap L)\cong G/H$.
\item\label{item:surj} The map $H\mapsto H\cap L$ induces a surjection $\phi: W_1\to W_2$. 
\item\label{item:bij} For $J\in W_2$, the map $\ell\mapsto \langle J, e_t\cdot \ell\rangle$ induces a bijection between $(L/J)[Q]$ and $\phi^{-1}(J)$.
\end{enumerate}
\end{lemma}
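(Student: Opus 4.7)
The plan is to prove the three parts in order, leaning on two structural facts: since $[G:L]=Q$ is prime, $L$ is a maximal subgroup of $G$; and since $a_t\geq 2$, the $Q$-torsion subgroup $G[Q]$ is contained in $L$ (the $t$-th cyclic factor $\mu_{Q^{a_t-1}}$ of $L$ still contains $\mu_Q$). For Part~\eqref{item:quotients}, maximality of $L$ together with $H\not\subseteq L$ forces $HL=G$, after which the second isomorphism theorem yields both $H/(H\cap L)\cong G/L\cong\ZZ/Q\ZZ$ and $L/(H\cap L)\cong G/H$.

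For Part~\eqref{item:surj} I must check that $H\cap L\in W_2$ for every $H\in W_1$, and that $\phi$ is surjective. The condition $\mu(G/(H\cap L))\neq 0$ follows from the embedding $G/(H\cap L)\hookrightarrow G/H\times G/L$: the target is a product of elementary abelian $p$-groups (using $G/L\cong\ZZ/Q\ZZ$), and so is every subgroup of it. The rank condition $\beta_{H\cap L}=t$ is harder. The bound $\beta_{H\cap L}\leq\beta_L=t$ is immediate from $a_t\geq 2$. For the opposite bound, the key observation is that $QH\subseteq H\cap L$ (since $H/(H\cap L)\cong\ZZ/Q\ZZ$ is killed by $Q$), so $(H\cap L)/QH$ has codimension $1$ in $H/QH\cong\FF_Q^t$, giving $\beta_{H\cap L}=(t-1)+\dim_{\FF_Q}(QH/Q(H\cap L))$. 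It then suffices to exhibit $Qh\in QH$ outside $Q(H\cap L)$: for any $h\in H\smallsetminus L$, an equality $h^Q=(h')^Q$ with $h'\in H\cap L$ gives $h(h')^{-1}\in G[Q]\subseteq L$, contradicting $h\notin L$. Surjectivity of $\phi$ is then a byproduct of Part~\eqref{item:bij} applied to $\ell=1$, or is visible directly from $\langle J,e_t\rangle\cap L=J$.

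For Part~\eqref{item:bij}, well-definedness of $\ell\mapsto\langle J,e_t\ell\rangle$ on cosets $\ell\in(L/J)[Q]$ is clear. The image lies in $\phi^{-1}(J)$: $(e_t\ell)^Q=e_t^Q\ell^Q\in J$ (using $e_t^Q\in QG_Q\subseteq J$, which holds as $\mu(G/J)\neq 0$ forces $QG\subseteq J$, together with $\ell^Q\in J$), so $\langle J,e_t\ell\rangle/J\cong\ZZ/Q\ZZ$; therefore $\beta=t$ and $\mu\neq 0$ transfer from $J$, and $\langle J,e_t\ell\rangle\cap L=J$ follows by the maximality argument of Part~\eqref{item:quotients}. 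Injectivity uses that $e_t$ generates $G/L$: an equality of two such subgroups forces $e_t\ell_2=j(e_t\ell_1)^k$ in $G$, and reducing modulo $L$ pins down $k\equiv 1\pmod Q$, whence $\ell_1\equiv\ell_2\pmod J$. For surjectivity, given $H\in\phi^{-1}(J)$, Part~\eqref{item:quotients} identifies $H/J$ with $G/L$; picking a generator of $H/J$ whose image in $G/L$ equals $e_t\bmod L$ writes $H=\langle J,e_t\ell\rangle$ for some $\ell\in L$, and $QH\subseteq H\cap L=J$ forces $\ell\in(L/J)[Q]$.

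The main obstacle I anticipate is the equality $\beta_{H\cap L}=t$ in Part~\eqref{item:surj}: every other step reduces to bookkeeping with the second isomorphism theorem, but this one rests decisively on $G[Q]\subseteq L$, which encodes precisely the hypothesis $a_t\geq 2$ distinguishing Case~2 from Case~1.
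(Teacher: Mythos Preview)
Your proof is correct and follows the same overall architecture as the paper's. Parts~\eqref{item:quotients} and~\eqref{item:bij} are essentially identical to the paper's arguments.

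The one genuine difference is your verification that $\beta_{H\cap L}=\beta_G$ in Part~\eqref{item:surj}. The paper observes once and for all that $\beta_A=\beta_G$ is equivalent to $G[Q]\subseteq A$; since this condition is manifestly closed under intersection and holds for both $H$ and $L$ (the latter because $a_t\ge 2$), it holds for $H\cap L$. Your filtration $Q(H\cap L)\subseteq QH\subseteq H\cap L$ and the nonvanishing of $QH/Q(H\cap L)$ is a valid alternative, but longer; moreover, the equivalence $\beta_A=\beta_G\Leftrightarrow G[Q]\subseteq A$ is what you implicitly invoke anyway in Part~\eqref{item:bij} when you say ``$\beta=t$ transfers from $J$'', so you may as well use it in Part~\eqref{item:surj} too. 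One small slip: where you write ``$\mu(G/J)\neq 0$ forces $QG\subseteq J$'' you mean $Q G_Q\subseteq J$ for the $Q$-Sylow $G_Q$ (the $M$-part need not lie in $J$); this does not affect the argument, since $e_t^Q\in QG_Q$ is all you use.
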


\begin{proof}
 \emph{\eqref{item:quotients}}: Since $H\in W_1$, we have $H\neq H\cap L$ and therefore the natural injection $H/(H\cap L)\hookrightarrow G/L\cong \ZZ/Q\ZZ$ is an isomorphism. Similarly, the cokernel of the natural map $L/(H\cap L)\hookrightarrow G/H$ is trivial because $G/L$ has order $Q$ and $H\not\subseteq L$.

 \emph{\eqref{item:surj} and~\eqref{item:bij}}: 
First, we claim that if $A,B\in W$ then $A\cap B\in W$. Note that having maximal $\FF_Q$-rank is equivalent to containing $G[Q]$. Moreover, suppose that $\mu(G/A)$ and $\mu(G/B)$ are nonzero. Then for every prime $p$, $G/A$ and $G/B$ contain no element of order $p^2$. Suppose for contradiction that $G/(A\cap B)$ contains an element of order $p^2$, represented by $g\in G$. Then, since $\mu(G/A)$ and $\mu(G/B)$ are nonzero, $g^p\in A$ and $g^p\in B$, so $g$ has order at most $p$ in $G/(A\cap B)$, which is the desired contradiction. Now, since $a_t\geq 2$, we have $L\in W$ and therefore the map $\phi$ is well defined.

Let $H\in \phi^{-1}(J)$, so $H\in W_1$ and $H\cap L=J$. By~\eqref{item:quotients}, the natural injection $H/J\hookrightarrow G/L\cong \ZZ/Q\ZZ$ is an isomorphism. Since $G/L$ is generated by the image of $e_t$, this implies that $H=\langle J, e_t\cdot \ell\rangle$ for some $\ell\in L$. Since $J\in W_2$, we have $\mu(G/J)\neq 0$ and hence $e_t^Q\in J$. Since $H/J$ has order $Q$, we also have $(e_t\cdot \ell)^Q\in J$, and hence $\ell^Q\in J$. So far, we have shown that any element of $\phi^{-1}(J)$ is of the form $\langle J, e_t\cdot \ell\rangle$ for some $\ell\in L$ such that $\ell^Q\in J$. Now we show that any group of this form is in $\phi^{-1}(J)$. Let $\ell\in L$ be such that $\ell^Q\in J$ and let $H=\langle J, e_t\cdot \ell\rangle$. It is clear that $H\in W_1$, since $J\in W$ and $e_t\cdot \ell\notin L$. We must show that $H\cap L=J$. Clearly, $J\subseteq H\cap L$. As noted above, since $J\in W_2$, we have $e_t^Q\in J$. So $H/J$ has order at most $Q$. Moreover, $H/(H\cap L)$ has order $Q$ by~\eqref{item:quotients}, and hence $J=H\cap L$, as required. In particular, we have shown that $\phi^{-1}(J)$ is non-empty for any $J\in W_2$, whereby $\phi$ is surjective and we have proved~\eqref{item:surj}.

To complete the proof of \eqref{item:bij}, we assert that two groups $H_1=\langle J, e_t\cdot \ell_1\rangle$ and $H_2=\langle J, e_t\cdot \ell_2\rangle$ in $\phi^{-1}(J)$ are equal if and only if $\ell_1\cdot \ell_2^{-1}\in J$. Simply observe that if $H_1=H_2$ then $\ell_1\cdot \ell_2^{-1}\in H_1\cap L=J$. The other direction is clear.
\end{proof}

Note that for $J\subseteq L$, we have $f(\chi)=1$ for all $\chi: \Idele /k^*\to J$, and hence $N(J, L; B^{|J|/|G|})=N(J; B^{|J|/|G|})$. Thus,
Lemma~\ref{lem:pairH} shows that~\eqref{eq:sum_N_over_subgroupsWA} equals
\begin{align} \label{eq:moebius_inversion_a_greater_1WAprelim}
&\sum_{H\in W_1}\mu(G/H)N(H,L; B^{|H|/|G|})+  \sum_{J\in W_2}\mu(G/J)N(J; B^{|J|/|G|})\nonumber\\
  & \quad \quad +o(B^{1/\alpha(G)}(\log B)^{\nu(k,G)-1})\nonumber\\
= &\sum_{J\in W_2}\left(\mu(G/J)N(J; B^{|J|/|G|})+\mu(L/J)\sum_{H\in\phi^{-1}(J)}N(H,L; B^{|H|/|G|})\right)\nonumber\\
 &\quad \quad+o(B^{1/\alpha(G)}(\log B)^{\nu(k,G)-1}).
  \end{align}
  
  \begin{lemma}\label{lem:mu}
  For $J\in W_2$, we have $\mu(G/J)=-|(L/J)[Q]|\cdot \mu(L/J)$.
  \end{lemma}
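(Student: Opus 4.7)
The plan is to compute both sides using P.~Hall's explicit formula for the Möbius function on the subgroup lattice of a finite abelian group: $\mu(A)$ vanishes unless every Sylow subgroup $A_p$ is elementary abelian, in which case
\[
\mu(A) = \prod_p (-1)^{n_p} p^{\binom{n_p}{2}}, \qquad n_p := \dim_{\FF_p} A_p.
\]
I will apply this with $A = G/J$ and $A = L/J$ and compare the two expressions.

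The key geometric input is the short exact sequence $1 \to L/J \to G/J \to G/L \to 1$ together with the identification $G/L \cong \ZZ/Q\ZZ$ (since $[G:L] = Q$, as $L$ is a maximal subgroup containing the proper subgroup $\langle M, e_1, \dots, e_{t-1}, e_t^Q\rangle$). Taking $p$-primary components, which is an exact functor on finite abelian groups, I obtain $(L/J)_p = (G/J)_p$ for every prime $p \neq Q$, and a short exact sequence
\[
1 \to (L/J)_Q \to (G/J)_Q \to \ZZ/Q\ZZ \to 1
\]
at $p = Q$. By the hypothesis $J \in W$ we have $\mu(G/J) \neq 0$, so $(G/J)_Q$ is elementary abelian, hence an $\FF_Q$-vector space; the displayed sequence then splits as a sequence of $\FF_Q$-vector spaces. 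Writing $r := \dim_{\FF_Q}(L/J)_Q$, this forces $\dim_{\FF_Q}(G/J)_Q = r+1$. In particular $(L/J)_Q$ is itself elementary abelian, so $\mu(L/J) \neq 0$ and $|(L/J)[Q]| = |(L/J)_Q| = Q^r$.

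Finally, in the ratio $\mu(G/J)/\mu(L/J)$ all prime-to-$Q$ Sylow factors cancel (since $(L/J)_p = (G/J)_p$ there), and only the $Q$-part remains:
\[
\frac{\mu(G/J)}{\mu(L/J)} = \frac{(-1)^{r+1} Q^{\binom{r+1}{2}}}{(-1)^{r} Q^{\binom{r}{2}}} = -Q^{r} = -|(L/J)[Q]|,
\]
which rearranges to the claimed identity. No substantial obstacle is anticipated; the only subtle point is the exactness of the $Q$-Sylow functor, which is standard for finite abelian groups, and the implicit verification that the hypotheses $J \in W_2$ and $J \subseteq L$ force both $\mu(L/J) \neq 0$ and the equality $|(L/J)[Q]| = Q^r$.
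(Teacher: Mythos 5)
Your proof is correct and follows essentially the same route as the paper: both compute $\mu(G/J)$ and $\mu(L/J)$ via the explicit multiplicative formula for the M\"obius function on finite abelian groups, observe that the prime-to-$Q$ parts of $G/J$ and $L/J$ agree while the $Q$-parts are elementary abelian of ranks $r+1$ and $r$ (the paper phrases this as $G/J\cong T\times(\Z/Q\Z)^s$ and $L/J\cong T\times(\Z/Q\Z)^{s-1}$), and then take the ratio. Your extra care with exactness of $p$-primary components and the splitting of the $\FF_Q$-vector space sequence is a harmless elaboration of the paper's one-line deduction.
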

  
  \begin{proof}
Recall that the M\"{o}bius function on isomorphism classes of finite abelian groups satisfies $\mu(G) = 0$ if $G$ has a cyclic subgroup of order $p^n$, with $p$ a prime and $n \geq 2$, $\mu(G_1\times G_2) = \mu(G_1)\mu(G_2)$ if $G_1$ and $G_2$ have coprime order, and $\mu((\Z/p\Z)^n) = (-1)^np^{n(n-1)/2}$ for a prime $p$ and $n \in \Z_{>0}$.

Since $J\in W_2$, we have $\mu(G/J)\neq 0$ and hence $G/J\cong T\times (\Z/Q\Z)^s$ where $Q\nmid \lvert T\rvert$ and $s\in\Z_{>0}$. Since $[G:L]=Q$, it follows that $L/J$ is an index $Q$ subgroup of $G/J$, and hence $L/J\cong T\times (\Z/Q\Z)^{s-1}$. Now $\mu(G/J)=(-1)^sQ^{s(s-1)/2}\mu(T)$ and $\mu(L/J)=(-1)^{s-1}Q^{(s-1)(s-2)/2}\mu(T)$, whence the result.
  \end{proof}
  Plugging the result of Lemma~\ref{lem:mu} into~\eqref{eq:moebius_inversion_a_greater_1WAprelim} gives
  
\begin{align}   \label{eq:moebius_inversion_a_greater_1WA}
& \sum_{J\in W_2}\mu(L/J)\left(-|(L/J)[Q]|\cdot N(J; B^{|J|/|G|})+\sum_{H\in\phi^{-1}(J)}N(H,L; B^{|H|/|G|})\right) \nonumber \\
  &\quad \quad+o(B^{1/\alpha(G)}(\log B)^{\nu(k,G)-1}).
\end{align}
Lemma~\ref{lem:pairH}\eqref{item:bij} shows that the number of terms in the
sum over $H\in \phi^{-1}(J)$ is $|(L/J)[Q]|$. Our aim is thus to show that for
each of these terms we have
\begin{equation}\label{eq:cancellation_goal}
N(H,L; B^{|H|/|G|})=N(J; B^{|J|/|G|})+o(B^{1/\alpha(G)}(\log B)^{\nu(k,G)-1}),
\end{equation}
which will prove~\eqref{eq:goalWA}. We begin this proof of this, which
culminates  in  Lemma~\ref{lem:tauberian_applicationWA}, by studying
the counting functions $N(H,L;B)$ through their zeta functions
\begin{equation*}
  F_{H,f}(s)=\sum_{\chi\in\Hom(\Idele/k^*,H)}\frac{f(\chi)}{\Phi_H(\chi)^s},
\end{equation*}
converging absolutely for $\re s>1/\alpha(H)$.
Henceforth, we will fix an element $J$ of $W_2$ and an element $H$ of
$\phi^{-1}(J)$, so $J=H\cap L$. For the corresponding zeta functions, we 
prove the following proposition. Note that
$Q\cdot |J|=|H|$ by Lemma \ref{lem:pairH}\eqref{item:quotients}. 
We write again $k_0=k(\mu_Q)$.

 \begin{proposition}\label{prop:cancellationWA}
  There are holomorphic functions $g_1(s)$ and $g_2(s)$ on $\re s>1/\alpha(H)$,
  such that
  \begin{align}
    F_{H,f}(s)&=\zeta_{k_0}(\alpha(H)s)^{\nu(k,H)}\cdot g_1(s),\label{eq:f_H_f_zetaWA}\\
    F_{J,f}(Qs)&=\zeta_{k_0}(\alpha(H)s)^{\nu(k,H)}\cdot g_2(s),
  \end{align}
  for $\re s>1/\alpha(H)$. The functions $g_1,g_2$
  extend continuously to $\re s\geq 1/\alpha(H)$ and satisfy, for some
  $\delta\in(0,1]$ 
  and all $s$ in
  some compact convex neighbourhood $C$ of
  $1/\alpha(H)$ in this half-plane, the condition
  \begin{align*}
    |g_i(s)-g_i(1/\alpha(H))|\ll_C |s-1/\alpha(H)|^\delta.
  \end{align*}
  Moreover, $g_1(1/\alpha(H))=g_2(1/\alpha(H))\neq
  0$. 
\end{proposition}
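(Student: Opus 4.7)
My plan is to realise both $F_{H,f}(s)$ and $F_J(Qs)$---noting that $F_{J,f}=F_J$, since $J\subseteq L$ forces $f(\chi)=1$ for every $\chi:\Idele/k^*\to J$---as Fourier--Euler products, extract the common factor $\zeta_{k_0}(\alpha(H)s)^{\nu(k,H)}$ from each, and verify that the leading coefficients of the remainders coincide at $s=1/\alpha(H)$. The backbone is the Fourier inversion on the sequence $1\to k^*\to\Idele\to\Idele/k^*\to 1$ used throughout \cite[\S 4]{HNP}, which yields
\[
F_{H,f}(s)=\sum_{x\in k^*\otimes\dual H}\prod_v\widehat{f}_{H,v}(x_v;s)\quad\text{and}\quad F_J(Qs)=\sum_{y\in k^*\otimes\dual J}\prod_v\widehat{1}_{J,v}(y_v;Qs).
\]
By \cite[Lemma 4.5]{HNP}, the summands with $k_x\neq k_0$ (resp.\ $k_y\neq k_0$) contribute a pole of order strictly smaller than $\nu(k,H)$ at $s=1/\alpha(H)$ and therefore fall into the holomorphic parts $g_1,g_2$. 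Using Lemma~\ref{lem:missing_h_local_factorWA}, the surviving sum is further restricted to $x\in\OO_S^{*Q}\otimes\dual H$ (and analogously for $y$), exactly as in Section~\ref{sec:easy}.

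For each $v\notin S$ with $q_v\equiv 1\pmod Q$, the leading behaviour of $\widehat{f}_{H,v}(1;s)$ is of the form $1+c_vq_v^{-\alpha(H)s}+O(q_v^{-(\alpha(H)+1)s})$, with $c_v$ the constant obtained by setting $x_v=1$ in Lemma~\ref{lem:missing_h_local_factorWA} (all indicator functions then equal $1$); this matches the leading term of $\zeta_{k_0,v}(\alpha(H)s)^{\nu(k,H)}$. Dividing the Euler product by $\zeta_{k_0}(\alpha(H)s)^{\nu(k,H)}$ then yields an Euler product that converges absolutely on a half-plane $\re s>1/\alpha(H)-\delta'$ for some $\delta'>0$, which I take as $g_1$; the same procedure applied to $F_J(Qs)$ defines $g_2$. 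Continuity up to the boundary and the H\"older bound $|g_i(s)-g_i(1/\alpha(H))|\ll|s-1/\alpha(H)|^\delta$ then follow from this absolute convergence, together with Lemma~\ref{lem:L-boundWA} applied to the non-principal Dirichlet characters arising from the terms with $x,y\neq 1$.

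The main obstacle is the identity $g_1(1/\alpha(H))=g_2(1/\alpha(H))$, together with nonvanishing. The structural reason to expect it is that by Lemma~\ref{lem:pairH} one has $H/J\cong\ZZ/Q\ZZ$ and $L/J\cong G/H$: every character $J\to S^1$ extends to exactly $Q$ characters $H\to S^1$, whose images automatically lie inside $L$ and so satisfy $f=1$, while the rescaling $s\mapsto Qs$ compensates for the relation $|H|=Q|J|$ in the conductor exponents. To convert this structural matching into an equality of leading coefficients, my plan is, place-by-place for $v\notin S$, to split the character sum over $\Hom(k_v^*,H)$ according to the image in $H/J\cong\ZZ/Q\ZZ$ and to match it term-by-term with the corresponding sum for $\Hom(k_v^*,J)$ via Lemma~\ref{lem:missing_h_local_factorWA} and its analogue for $\widehat{1}_{J,v}$. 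Nonvanishing then follows because each local factor is a positive real number at the critical point. The technical heart lies in the bookkeeping of the various indicator functions appearing in Lemma~\ref{lem:missing_h_local_factorWA}; this is precisely where the original strategy of \cite{HNP} broke down, so it will warrant the most careful treatment.
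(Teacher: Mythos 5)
Your proposal breaks down at its first quantitative step, and the failure is not repairable within the framework you set up. For $F_{H,f}(s)$ with $H\in W_1$ (so $H\not\subseteq L$), the local factor at a place $v\notin S$ split in $k_0$ and at trivial $x_v$ is, by Lemma~\ref{lem:missing_h_local_factorWA} (in the normalisation of the Proposition),
\[
\widehat{f}_{H,v}(1;s)=1+\bigl(Q^{\beta_G}-Q+1-\tfrac1Q\bigr)q_v^{-\alpha(H)s}+O\bigl(q_v^{-(\alpha(H)+1)s}\bigr),
\]
whereas the local factor of $\zeta_{k_0}(\alpha(H)s)^{\nu(k,H)}$ at such $v$ is $1+(Q^{\beta_G}-1)q_v^{-\alpha(H)s}+\cdots$, since $[k_0:k]\,\nu(k,H)=Q^{\beta_G}-1$. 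The two leading coefficients differ by $Q-2+1/Q\neq 0$ for every prime $Q$, so the quotient Euler product you propose to take as $g_1$ has local factors $1-(Q-2+1/Q)q_v^{-\alpha(H)s}+\cdots$ at split places; it does not converge absolutely on any half-plane containing $s=1/\alpha(H)$, and along the real axis it tends to $0$ as $s\to 1/\alpha(H)^+$. This is precisely the computation at the end of Section~\ref{sec:easy}, where it is used to prove that in Case~1 the count \emph{is} $o(\cdot)$; if your claims were correct here they would force $g_1(1/\alpha(H))=0$, contradicting the statement you are trying to prove. The second, related error is the assertion that the dual sum restricts to the finite set $\OO_S^{*Q}\otimes\dual H$ ``exactly as in Section~\ref{sec:easy}'': that restriction rests on $f_v(\chi_v\psi_v)=f_v(\chi_v)$ for unramified $\psi_v$ valued in the group being summed over, which holds only for subgroups of $L$. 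Since $H\not\subseteq L$, the transform $\widehat{f}_{H,v}(x_v;s)$ vanishes only for $x_v\notin\OO_v^*\otimes\dual{L_H}$ (the last part of Lemma~\ref{lem:missing_h_local_factorWA}), so infinitely many $x\in k^*\otimes\dual H$ survive in a direct Poisson summation over $\Hom(\Idele/k^*,H)$; the full pole of order $\nu(k,H)$ is produced by this infinite sum, not by any single Euler product.

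This is why the paper instead decomposes $\Hom(\Idele/k^*,H)$ into cosets of $\Hom(\Idele/k^*,J)$ and applies Poisson summation coset by coset over $J$ (Lemmas~\ref{lem:relaxed_condition_halfway_local_factorsWA}--\ref{lem:relaxed_condition_poissonWA}): since $J\subseteq L$, invariance under unramified $J$-twists holds and each dual sum is finite; the trivial coset reproduces $F_{J,f}(Qs)$ verbatim, while the degree computation of Lemma~\ref{lem:nu_analysisWA}, via the auxiliary fields $k_\eta$, $k_x$, $k_{x,1}$, shows that every nontrivial coset contributes a pole of strictly smaller order. The equality $g_1(1/\alpha(H))=g_2(1/\alpha(H))$ is therefore a global statement about which cosets survive, and cannot be obtained by matching local factors place by place: as the computation above shows, the local factors of $F_{H,f}(s)$ and $F_{J,f}(Qs)$ genuinely differ. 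Finally, the uniformity over the infinitely many nontrivial cosets --- needed for the continuity of $g_1$ up to $\re s=1/\alpha(H)$ and for the H\"older bound --- is where Lemma~\ref{lem:L-boundWA} and the convergence of $\sum_{\psi}\Phi(\psi)^{-1-1/\alpha(H)+\epsilon}$ over $\psi\in\Hom(\Idele/k^*,\mu_Q)$ are actually required; your proposal contains no substitute for this step.
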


Almost all that remains of this corrigendum is devoted to the proof of
Proposition \ref{prop:cancellationWA}, which we start now. The natural inclusion of $J$ in
$H$ gives a short exact sequence
\begin{equation}\label{eq:GQam1_sequenceWA}
   1\to J\xrightarrow{\text{incl}} H \xrightarrow{} H/J \to 1,
 \end{equation}
 where $H/J\cong \mu_Q$ by Lemma~\ref{lem:pairH}\eqref{item:quotients}.
 This induces an exact sequence
 \begin{equation}
   \label{eq:GQam1_hom_sequenceWA}
   1\to\Hom(\Idele/k^*,J)\xrightarrow{\text{incl}} \Hom(\Idele/k^*,H) \xrightarrow{} \Hom(\Idele/k^*,H/J).
 \end{equation}
 
  For any $\eta_v\in\Hom(k_v^*,H)$ and $\chi_v\in\Hom(k_v^*,J)$, we define
 $f_{\eta_v,v}(\chi_v):=f_v(\chi_v\eta_v)$. Similarly, for
 $\eta\in\Hom(\Idele,H)$ and $\chi\in\Hom(\Idele,J)$, we let
  \begin{equation*}
   f_{\eta}(\chi):=f(\chi\eta)=\prod_{v}f_{\eta_v,v}(\chi_v).
 \end{equation*}
 Note that,
 for all places $v\notin S$ with
 $\eta_v|_{\OO_v^*}=1$, we have
 \begin{equation*}
 f_{\eta_v,v}|_{\Hom(k_v^*/\OO_v^*,J)}=f_v|_{\Hom(k_v^*/\OO_v^*,J)}=1.
\end{equation*}

We fix a system $R\subseteq \Hom(\Idele/k^*,H)$ of representatives for the quotient
 $\Hom(\Idele/k^*,H)/\Hom(\Idele/k^*,J)$, such that $1\in R$. Then
for $\re s>1/\alpha(H)$,
 \begin{equation}
   \label{eq:sorting_sumWA}
   F_{H,f}(s)=\sum_{\eta\in R}\sum_{\chi\in\Hom(\Idele/k^*,J)}\frac{f_\eta(\chi)}{\Phi_{H}(\chi\eta)^s}.
 \end{equation}
 Our next goal is to analyse the sum over $\Hom(\Idele/k^*,J)$ via Poisson
 summation.
 
 Let $s\in\CC$. We start by computing the local Fourier transforms
 $h_{\eta_v,v}(x_v;s)$ of the functions 
 $\chi_v\mapsto f_{\eta_v,v}(\chi_v)\Phi_{H}(\chi_v\eta_v)^{-s}$ on
 $\Hom(k_v^*,J)$. With the Haar measure on $\Hom(k_v^*,J)$
 normalised as in
 \cite[\S 3.2]{HNP}, they are given for
 $x_v\in k_v^*\otimes\dual{J}$ by the
 formula
 \begin{equation}
   \label{eq:relaxed_condition_local_fourier_transformWA}
   h_{\eta_v,v}(x_v;s) = \sum_{\chi_v\in\Hom(k_v^*,J)}\frac{f_{\eta_v,v}(\chi_v)\langle\chi_v,x_v\rangle}{\Phi_{H}(\chi_v\eta_v)^s}\cdot
   \begin{cases}
     1&\text{ if $v\mid\infty$},\\
     |J|^{-1}&\text{ if $v\nmid\infty$.}
   \end{cases}
 \end{equation}
  As in 
 \cite[\S 4.3]{HNP}, we denote
 the local Fourier transform of $f_v\Phi_{J}^{-s}$ by
 $\widehat{f}_{J,v}(x_v;s)$.

  As in the proof of Lemma \ref{lem:missing_h_local_factorWA}, we
 use our uniformiser $\pi_v$ to split the sequence
 $1\to\OO_v^*\to k_v^*\to k_v^*/\OO_v^*\to 1$, thus writing
 $\Hom(k_v^*,J)=\Hom(\OO_v^*,J)\times \Hom(k_v^*/\OO_v^*,J)$. Hence, we write
 $\chi_v\in\Hom(k_v^*,J)$ as $\chi_v=\chi_{v,\text{r}}\chi_{v,\text{ur}}$ with
 $\chi_{v,\text{r}}\in\Hom(\OO_v^*,J)$ and
 $\chi_{v,\text{ur}}\in\Hom(k_v^*/\OO_v^*,J)$. 
 Recall, moreover,
 the abuse of notation introduced before the statement of Lemma \ref{lem:missing_h_local_factorWA}.

 \begin{lemma}\label{lem:relaxed_condition_halfway_local_factorsWA}
   Let $\re(s)\geq 0$, let $v$ be a place of $k$, let $\eta_v\in \Hom(k_v^*,H)$ and
   let $x_v\in
   k_v^*\otimes\dual{J}$. Write $V$ for the subgroup of order $Q$ of $G$ generated by $e_1^{Q^{a_1-1}}$.
   \begin{enumerate}
    \item If $v\notin S$ and $x_v\notin \OO_v^*\otimes \dual{J}$, then
    $h_{\eta_v,v}(x_v;s)=0$.   
  \item If $\eta_v\in\Hom(k_v^*,J)$, then
    \begin{equation*}
      h_{\eta_v,v}(x_v;s) = \langle\eta_v^{-1},x_v\rangle\widehat{f}_{J,v}(x_v;Qs).
    \end{equation*}
    
    \item If $v\notin S$ with $q_v\not\equiv 1\bmod Q$,
      $x_v\in\OO_v^*\otimes\dual{J}$ and $\eta_v|_{\OO_v^*}\in\Hom(\OO_v^*,J)$, then 
    \begin{equation*}
      h_{\eta_v,v}(x_v;s) = \langle(\eta_v|_{\OO_v^*})^{-1},x_v\rangle+O(q_v^{-(\alpha(H)+1)s}).
    \end{equation*}
    
     \item If $v\notin S$ with $q_v\equiv 1\bmod Q$, $x_v\in\OO_v^*\otimes\dual{J}$ and $\eta_v|_{\OO_v^*}\in\Hom(\OO_v^*,J)$, but $\eta_v\not\in\Hom(k_v^*,J)$, then
    \begin{align*}
      h_{\eta_v,v}(x_v;s) &= \langle(\eta_v|_{\OO_v^*})^{-1},x_v\rangle\left(1 + \left(Q^{\beta_H}\mathbbm{1}_{\OO_v^{*Q}\otimes\dual{J}}(x_v)-Q\mathbbm{1}_{\OO_v^{*Q}\otimes\dual{V}}(x_v)\right)q_v^{-\alpha(H)s}\right)\\ &+ O(q_v^{-(\alpha(H)+1)s}).
    \end{align*}
    
      \item If $v\notin S$ and
    $\eta_v|_{\OO_v^*}\not\in\Hom(\OO_v^*,J)$, then
    \begin{equation*}
      h_{\eta_v,v}(x_v;s) = O(q_v^{-(\alpha(H)+1)s}).
    \end{equation*}
    \end{enumerate}
\end{lemma}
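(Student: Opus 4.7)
My plan is to mimic the proof of Lemma~\ref{lem:missing_h_local_factorWA} while tracking the dependence on the twist $\eta_v$. Using the splitting $k_v^*=\OO_v^*\oplus\langle\pi_v\rangle$, I decompose $\chi_v=\chi_{v,\text{r}}\chi_{v,\text{ur}}$ and $\eta_v=\eta_{v,\text{r}}\eta_{v,\text{ur}}$. Since $\Phi_H(\chi_v\eta_v)$ depends only on the ramified part $\chi_{v,\text{r}}\eta_{v,\text{r}}$, the Fourier transform separates as
\begin{equation*}
h_{\eta_v,v}(x_v;s) = \frac{1}{|J|}\sum_{\chi_{v,\text{r}}}\frac{\langle\chi_{v,\text{r}},x_v\rangle}{\Phi_H(\chi_{v,\text{r}}\eta_{v,\text{r}})^s}\,S(\chi_{v,\text{r}}),
\end{equation*}
where $S(\chi_{v,\text{r}}):=\sum_{\chi_{v,\text{ur}}}f_v(\chi_v\eta_v)\langle\chi_{v,\text{ur}},x_v\rangle$.

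The crucial first observation is that $S(\chi_{v,\text{r}})$ takes only two values. The indicator $f_v(\chi_v\eta_v)$ equals $1$ iff either $e_1^{Q^{a_1-1}}\notin(\chi_v\eta_v)(\OO_v^*)$ (a condition depending on $\chi_{v,\text{r}}$ but not on $\chi_{v,\text{ur}}$) or $\chi_{v,\text{ur}}(\pi_v)\eta_v(\pi_v)\in L$. Since $\chi_{v,\text{ur}}(\pi_v)\in J\subseteq L$, the latter condition is equivalent to $\eta_v(\pi_v)\in L$, which is itself independent of $\chi_{v,\text{ur}}$. Hence, for each $\chi_{v,\text{r}}$, the function $\chi_{v,\text{ur}}\mapsto f_v(\chi_v\eta_v)$ is either identically $0$ or identically $1$, and orthogonality on $\Hom(k_v^*/\OO_v^*,J)$ yields $S(\chi_{v,\text{r}})\in\{0,\,|J|\mathbbm{1}_{\OO_v^*\otimes\dual{J}}(x_v)\}$. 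Part (1) is immediate. For part (2), I substitute $\chi_v':=\chi_v\eta_v$, which is a bijection of $\Hom(k_v^*,J)$ since $\eta_v\in\Hom(k_v^*,J)$; combined with the identity $\Phi_H(\chi_v')=\Phi_J(\chi_v')^Q$ for characters with image in $J$ (which uses $|H|/|J|=Q$), this yields the stated formula.

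Parts (3)--(5) then follow by sorting the $\chi_{v,\text{r}}$-sum according to the order $l$ of $\ker\chi_{v,\text{r}}=\OO_v^{*l}$, exactly as in the proof of Lemma~\ref{lem:missing_h_local_factorWA}. In parts (3) and (4), the dominant contribution comes from the unique value $\chi_{v,\text{r}}=\eta_{v,\text{r}}^{-1}$ (available since $\eta_{v,\text{r}}\in\Hom(\OO_v^*,J)$ by hypothesis), for which $\chi_{v,\text{r}}\eta_{v,\text{r}}=1$ and hence $\Phi_H(\chi_{v,\text{r}}\eta_{v,\text{r}})=1$; this yields the main term $\langle(\eta_v|_{\OO_v^*})^{-1},x_v\rangle$. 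For part (4), since $q_v\equiv 1\bmod Q$, I additionally isolate the $l=Q$ contribution using orthogonality manipulations identical in structure to those producing equations~\eqref{eq:e12} and~\eqref{eq:note12} in Lemma~\ref{lem:missing_h_local_factorWA}, with the subgroup $V$ playing the same role. For part (5), the hypothesis $\eta_{v,\text{r}}\notin\Hom(\OO_v^*,J)$ implicitly forces $q_v\equiv 1\bmod Q$ (since any character $\OO_v^*\to H$ with image not contained in $J$ has order divisible by $Q=[H:J]$) and rules out the case $\chi_{v,\text{r}}\eta_{v,\text{r}}=1$; the claimed bound then follows by combining the nontrivial conductor decay of every surviving term with the built-in cancellation of $S(\chi_{v,\text{r}})$ whenever Case~A fails and $\eta_v(\pi_v)\notin L$.

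The main technical obstacle will be the bookkeeping in part (4), where I must simultaneously track the twist $\eta_{v,\text{r}}$ and the two conditions defining $f_v$, extending the orthogonality computation in the proof of Lemma~\ref{lem:missing_h_local_factorWA} from the untwisted setting to the twisted one while preserving the explicit identification of both the leading $l=1$ coefficient and the two indicator terms $\mathbbm{1}_{\OO_v^{*Q}\otimes\dual{J}}$ and $\mathbbm{1}_{\OO_v^{*Q}\otimes\dual{V}}$ from the $l=Q$ contribution. Once this is done, the remaining $l>Q$ terms will be absorbed into the $O(q_v^{-(\alpha(H)+1)s})$ error exactly as in that lemma.
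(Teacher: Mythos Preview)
Your approach for parts (1)--(4) is essentially the paper's own: the same splitting $k_v^*=\OO_v^*\oplus\langle\pi_v\rangle$, the same observation that $f_v(\chi_v\eta_v)$ is independent of $\chi_{v,\text{ur}}$ (yielding (1) by orthogonality), the same substitution $\chi_v\mapsto\chi_v\eta_v$ for (2), and the same shift by $\eta_{v,\text{r}}$ followed by sorting according to the order of $(\chi_{v,\text{r}}\eta_{v,\text{r}})(\OO_v^*)$ for (3) and (4). One minor clarification: in (4) the computation is actually simpler than in Lemma~\ref{lem:missing_h_local_factorWA}, since the hypothesis $\eta_v(\pi_v)\notin J=H\cap L$ makes the condition ``$(\chi_v\eta_v)(\pi_v)\in L$'' \emph{always} fail, so only the analogue of \eqref{eq:note12} appears (there is no analogue of \eqref{eq:e12}).

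There is, however, a genuine gap in your argument for (5). Ruling out $\chi_{v,\text{r}}\eta_{v,\text{r}}=1$ only excludes the $d=1$ layer; the smallest remaining possibility $d=Q$ would give $\Phi_H(\chi_v\eta_v)=q_v^{|H|(1-1/Q)}=q_v^{\alpha(H)}$, which yields only $O(q_v^{-\alpha(H)s})$, not $O(q_v^{-(\alpha(H)+1)s})$. Your appeal to cancellation in $S(\chi_{v,\text{r}})$ cannot repair this: cancellation among finitely many terms of size $q_v^{-\alpha(H)s}$ does not produce a bound with a strictly larger exponent unless the sum is identically zero, which you have not shown.

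The missing step is to observe that the $d=Q$ layer is in fact empty. Since $\chi_v(\OO_v^*)\subseteq J$ but $\eta_v(\OO_v^*)\not\subseteq J=H\cap L$, the cyclic group $(\chi_v\eta_v)(\OO_v^*)$ is a subgroup of $H$ not contained in $L$. Any element of $G\setminus L$ has $e_t$-component of order $Q^{a_t}$, so $(\chi_v\eta_v)(\OO_v^*)$ has order divisible by $Q^{a_t}$. Because this lemma sits inside Case~2 ($a_t\geq 2$), we get $Q^2\mid d$ for every term, hence
\[
\Phi_H(\chi_v\eta_v)=q_v^{|H|(1-1/d)}\geq q_v^{|H|(1-1/Q^2)}\geq q_v^{\alpha(H)+1},
\]
and the bound follows termwise with no cancellation needed.
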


\begin{proof}
  Note that $\beta_G=\beta_J$ implies that $V\subseteq J$.
  
  \emph{(1)}: Recall that $J=H\cap L$. For $\chi_v\in\Hom(k_v^*,H)$ and
  $\psi_v\in\Hom(k_v^*/\OO_v^*,J)$, we have
  $(\chi_v\psi_v)(\OO_v^*)=\chi_v(\OO_v^*)$ and
  $(\chi_v\psi_v)(\pi_v)\in J$ if and only if
  $\chi_v(\pi_v)\in J$. Hence,
  $f_v(\chi_v)=f_v(\chi_v\psi_v)$. Therefore, $h_{\eta_v,v}(x_v;s)$ equals
  \begin{equation*}
    \frac{1}{|J|}\sum_{\chi_{v,\text{r}}\in\Hom(\OO_v^*,J)}\frac{f_{v}(\chi_{v,\text{r}}\eta_v)\langle\chi_{v,\text{r}},x_v\rangle}{\Phi_H(\chi_{v,\text{r}}\eta_v)^s}\sum_{\chi_{v,\text{ur}}\in\Hom(k_v^*/\OO_v^*,J)}\langle\chi_{v,\text{ur}},x_v\rangle.
  \end{equation*}
  The inner sum in the last expression is $0$ for $x_v\notin \OO_v^*\otimes
  \dual{J}$ by character orthogonality.
  
    \emph{(2)}: Upon replacing $\chi_v$ by $\chi_v\eta_v$ and observing that
  $\Phi_H(\chi_v)=\Phi_{J}(\chi_v)^Q$, we see from \eqref{eq:relaxed_condition_local_fourier_transformWA} that
  \begin{align*}
    h_{\eta_v,v}(x_v;s)     &=\langle\eta_v^{-1},x_v\rangle\sum_{\chi_v\in\Hom(k_v^*,J)}\frac{f_{v}(\chi_v)\langle\chi_v,x_v\rangle}{\Phi_{J}(\chi_v)^{Qs}}\cdot
   \begin{cases}
     1&\text{ if $v\mid\infty$, }\\
     |J|^{-1}&\text{ if $v\nmid\infty$.}
   \end{cases}\\
    &=\langle\eta_v^{-1},x_v\rangle\widehat{f}_{J,v}(x_v;Qs).   
  \end{align*}
  
    \emph{(3)} and \emph{(4)}:
    Again identifying
    $k_v^*/\OO_v^*=\langle\pi_v \rangle$, we write $\eta_v=\eta_{v,\text{r}}\eta_{v,\text{ur}}$
  with $\eta_{v,r}\in\Hom(\OO_v^*,J)$ and
  $\eta_{v,\text{ur}}\in\Hom(k_v^*/\OO_v^*,H)$. Upon noting that
  $\Phi_H(\chi_v\eta_v)=\Phi_{J}(\chi_v\eta_{v,\text{r}})^Q$ and
  replacing the variable $\chi_v$ by $\chi_v\eta_{v,\text{r}}$, we see from
  \eqref{eq:relaxed_condition_local_fourier_transformWA} that
  \begin{align*}
    h_{\eta_v,v}(x_v;s)    &=\frac{\langle\eta_{v,\text{r}}^{-1},x_v\rangle}{|J|}\sum_{\chi_v\in\Hom(k_v^*,J)}\frac{f_{v}(\chi_v\eta_{v,\text{ur}})\langle\chi_v,x_v\rangle}{\Phi_{J}(\chi_v)^{Qs}}.
  \end{align*}

  Hence, $h_{\eta_v,v}(x_v;s)$ is just
  $\langle(\eta_v|_{\OO_v^*})^{-1},x_v\rangle$ times the Fourier transform of
  the function $f_{\eta_{v,\text{ur}},v}\Phi_{J}^{-Qs}$, which we may evaluate
  using \cite[Lemma 3.3]{HNP}. This shows that $h_{\eta_v,v}(x_v;s)$ is equal
  to $\langle(\eta_v|_{\OO_v^*})^{-1},x_v\rangle$ times

\begin{equation}\label{eq:relaxed_condition_sort_local_sumWA}
  \sum_{d\mid
    (\exp(J),q_v-1)}\left(\sum_{\substack{\chi_v\in\Hom(\OO_v^*, J)\\
        \ker\chi_v=\OO_v^{*d}}}\pair{\chi_v}{x_v}\tau_{f_{\eta_{v,\text{ur}},v}}(\chi_v,x_v)\right)q_v^{-|J|(1-1/d)Qs},
\end{equation}
where
\begin{equation*}
 \tau_{f_{\eta_{v,\text{ur}},v}}(\chi_v, x_v) := \frac{1}{|J|}\sum_{\psi_v \in \Hom(k_v^*/\OO_v^*, J)}f_{v}(\chi_v\psi_v\eta_{v,\text{ur}}) \pair{\psi_v}{x_v}.
\end{equation*}

Clearly, for $\psi_v\in \Hom(k_v^*/\OO_v^*, J)$ we have
$f_{v}(\psi_v\eta_{v,\text{ur}})=1$, as
$(\psi_v\eta_{v,\text{ur}})(\OO_v^*)=\{1\}$. Hence, the summand for $d=1$ in \eqref{eq:relaxed_condition_sort_local_sumWA} is equal to
\begin{equation*}
  \tau_{f_{\eta_{v,\text{ur}},v}(1,x_v)} = \frac{1}{|J|}\sum_{\psi_v \in
    \Hom(k_v^*/\OO_v^*, J)}\pair{\psi_v}{x_v}=1.
\end{equation*}
If $q_v\not\equiv 1\bmod Q$, there is no summand with $d=Q$, and the further
summands are $\ll q_v^{-(\alpha(H)+1)s}$, as desired in
\emph{(3)}.

If $q_v\equiv 1\bmod Q$, then the next summand appears for
$d=Q$. Our assumptions on $\eta_v$ in \emph{(4)} imply that $\eta_{v,\text{ur}}(\pi_v)\notin
J$. Hence, $f_v(\chi_v\psi_v\eta_{v,\text{ur}})=1$
if and only if $e_1^{Q^{a_1-1}}\notin\chi_v(\OO_v^*)$. Hence, the summand for
$d=Q$ in \eqref{eq:relaxed_condition_sort_local_sumWA} is
\begin{align*}
  &\frac{q_v^{-\alpha(H)s}}{|J|}\sum_{\substack{\chi_v\in\Hom(\OO_v^*,J)\\\ker\chi_v=\OO_v^{*Q}\\ \chi_v(\OO_v^*)\neq V}}\langle\chi_v,x_v\rangle\sum_{\psi_v\in\Hom(k_v^*/\OO_v^*,J)}\langle\psi_v,x_v\rangle\\
  =
  &q_v^{-\alpha(H)s}\left(\sum_{\chi_v\in\Hom(\OO_v^*/\OO_v^{*Q},J)}\langle\chi_v,x_v\rangle
    - \sum_{\chi_v\in\Hom(\OO_v^*/\OO_v^{*Q},V)}\langle\chi_v,x_v\rangle\right)\\
  &=q_v^{-\alpha(H)s}(Q^{\beta_H}\mathbbm{1}_{\OO_v^{*Q}\otimes
        \dual{J}}(x_v)-Q\mathbbm{1}_{\OO_v^{*Q}\otimes\dual{V}}(x_v)).
\end{align*}
Here we have repeatedly used that $\OO_v^*/\OO_v^{*Q}$ is a cyclic group of order $Q$ for $v\notin S$.

\emph{(5)}: For all $\chi_v\in \Hom(k_v^*,J)$, we have
$(\chi_v\eta_v)|_{\OO_v^*}\in \Hom(\OO_v^*,H)\smallsetminus
\Hom(\OO_v^*,J)$. 
Hence, $(\chi_v\eta_v)(\OO_v^*)$ contains an
element of order $Q^{a_t}$, which shows that
$\ker((\chi_v\eta_v)|_{\OO_v^*})=\OO_v^{*d}$ for some $d$ divisible by $Q^{a_t}$,
so in particular $Q^2\mid d$. This shows that
\begin{equation*}
  \Phi_H(\chi_v\eta_v) = \prod_{\psi\in \dual{(\OO_v^*/\OO_v^{*d})}}\Phi(\psi)^{|H|/d}=q_v^{|H|(1-1/d)}.
\end{equation*}
Hence, every summand in \eqref{eq:relaxed_condition_local_fourier_transformWA} is
$\ll q_v^{-|H|(1-1/d)s}\ll  q_v^{-(\alpha(H)+1)s}$.
\end{proof}

Note that $f_{v}(\chi_v)=1$ for all $v\notin S$ and
$\chi_v\in\Hom(k_v^*,J)$. Hence, by \cite[Lemma 4.1]{HNP}, the local Fourier transforms
$\widehat{f}_{J,v}(x_v;Qs)$
appearing in case \emph{(2)} of Lemma~\ref{lem:relaxed_condition_halfway_local_factorsWA} take, for $v\notin S$ and
$x_v\in\OO_v^*\otimes\dual{J}$, the shape
\begin{equation}\label{eq:relaxed_condition_trivial_local_fourier_transformWA}
  \begin{cases}
    1+(Q^{\beta_H}\mathbbm{1}_{\OO_v^{*Q}\otimes
      \dual{J}}(x_v)-1)q_v^{-\alpha(H)s} + O(q_v^{-(\alpha(H)+1)s}), &
    q_v\equiv 1\bmod Q,\\
    1+ O(q_v^{-(\alpha(H)+1)s}), &q_v\not\equiv 1\bmod Q.
  \end{cases}
\end{equation}

\begin{lemma}\label{lem:relaxed_condition_fourier_transformWA}
  Let $\eta\in \Hom(\Idele,H)$. 
  \begin{enumerate}
  \item Let $x=(x_v)_v\in\Idele\otimes\dual{J}$. Then the product
    $h_\eta(x;s):=\prod_{v}h_{\eta_v,v}(x_v;s)$ converges absolutely for
    $\re s>1/\alpha(H)$ and defines a holomorphic function in this half-plane.

  \item Let $\re s>1/\alpha(H)$. Then the function $x\mapsto h_\eta(x;s)$ on
    $\Idele\otimes\dual{J}$ is the Fourier transform of the function $\chi\mapsto f_\eta(\chi)\Phi_H(\chi\eta)^{-s}$ on $\Hom(\Idele,J)$.
  \end{enumerate}
\end{lemma}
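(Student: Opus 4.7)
The plan is to deduce both parts from the local estimates of Lemma~\ref{lem:relaxed_condition_halfway_local_factorsWA}, with no additional harmonic-analytic input.

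For part~(1), the key observation is that continuity of $\eta$ forces $\eta_v|_{\OO_v^*}=1$ for all but finitely many $v$, and similarly $x\in\Idele\otimes\dual{J}$ satisfies $x_v\in\OO_v^*\otimes\dual{J}$ for almost all $v$. At each such generic place outside $S$, one of cases~(2), (3), (4) of Lemma~\ref{lem:relaxed_condition_halfway_local_factorsWA} applies; combining these with~\eqref{eq:relaxed_condition_trivial_local_fourier_transformWA} yields a uniform bound
\[
|h_{\eta_v,v}(x_v;s)|=1+O(q_v^{-\alpha(H)\re s}),
\]
with implied constant depending only on $G$. Since $\sum_v q_v^{-\sigma}$ converges for every $\sigma>1$, the Euler product $\prod_v h_{\eta_v,v}(x_v;s)$ converges absolutely and uniformly on compact subsets of the half-plane $\re s>1/\alpha(H)$. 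As each local factor is a finite exponential sum in $s$, hence entire, the product is holomorphic there.

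For part~(2), I would exploit the product structure of both the integrand and the measure. The function $f_\eta(\chi)\Phi_H(\chi\eta)^{-s}$ on $\Hom(\Idele,J)$ factors as $\prod_v f_{\eta_v,v}(\chi_v)\Phi_H(\chi_v\eta_v)^{-s}$, the pairing satisfies $\langle\chi,x\rangle=\prod_v\langle\chi_v,x_v\rangle$, and the measure on $\Hom(\Idele,J)$ set up in~\cite[\S 3.2]{HNP} is the product of the local measures appearing in~\eqref{eq:relaxed_condition_local_fourier_transformWA}. Consequently, the global Fourier transform at $x=(x_v)_v$ factors as $\prod_v h_{\eta_v,v}(x_v;s)=h_\eta(x;s)$, where the required interchange of summation and product is justified by the absolute convergence proved in part~(1).

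The main technical point I anticipate is verifying the uniform bound in case~(4) of Lemma~\ref{lem:relaxed_condition_halfway_local_factorsWA}: the expression $Q^{\beta_H}\mathbbm{1}_{\OO_v^{*Q}\otimes\dual{J}}(x_v)-Q\mathbbm{1}_{\OO_v^{*Q}\otimes\dual{V}}(x_v)$ is a priori as large as $Q^{\beta_H}$, but it is bounded independently of $v$ and $x_v$, so it contributes only to the implied constant and not to the convergence exponent. Beyond this, the rest of the argument reduces to the standard estimate $\sum_v q_v^{-\sigma}<\infty$ for $\sigma>1$.
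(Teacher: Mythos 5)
Your proposal is correct and follows essentially the same route as the paper: part (1) via the local estimate $h_{\eta_v,v}(x_v;s)=1+O(q_v^{-\alpha(H)s})$ at the almost-all places where $\eta_v|_{\OO_v^*}=1$ and $x_v\in\OO_v^*\otimes\dual{J}$, and part (2) by the standard factorisation of Fourier transforms on restricted direct products. The only point worth making explicit in part (2) is the paper's observation that $f_{\eta_v,v}(\chi_v)\Phi_H(\chi_v\eta_v)^{-s}=1$ for all $\chi_v\in\Hom(k_v^*/\OO_v^*,J)$ at almost all $v$ --- this is precisely the normalisation hypothesis on the distinguished compact subgroups that the restricted-product machinery requires, and it is what your appeal to ``the measure is a product of local measures'' is implicitly relying on.
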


\begin{proof}
  \emph{(1)}: Almost all places $v$ of $k$ satisfy $v\notin S$,
  $x_v\in\OO_v^*\otimes J$ and $\eta_v|_{\OO_v^*}=1$. For such $v$,
  Lemma \ref{lem:relaxed_condition_halfway_local_factorsWA} and \eqref{eq:relaxed_condition_trivial_local_fourier_transformWA} show that
  $h_{\eta_v,v}(x_v;s)=1+O(q_v^{-\alpha(H)s})$, which is sufficient to prove
  \emph{(1)}.

  \emph{(2)}: For all $v\notin S$ with $\eta_v|_{\OO_v^*}=1$, we see that
  $f_{\eta_v,v}(\chi_v)\Phi_H(\chi_v\eta_v)^{-s}=1$ for all
  $\chi_v\in\Hom(k_v^*/\OO_v^*,J)$. Hence, \emph{(2)} follows from
  \emph{(1)} by standard tools in Fourier analysis on restricted direct
  products, see e.g. \cite[I, Lemma 4]{Moreno}.
\end{proof}

We require the following version of Poisson summation. 

 \begin{lemma}\label{lem:relaxed_condition_poissonWA}
   Let $\re s>1/\alpha(H)$ and $\eta\in\Hom(\Idele,H)$. Then
   \begin{equation}\label{eq:poisson_formula}
     \sum_{\chi\in\Hom(\Idele/k^*,J)}\frac{f_\eta(\chi)}{\Phi_{H}(\chi\eta)^s}
     = \frac{1}{|\OO_k^*\otimes\dual{J}|}\sum_{x\in k^*\otimes\dual{J}}h_{\eta}(x;s).
   \end{equation}
   The sum on the right-hand side has only finitely many non-zero terms and
   defines a holomorphic function on $\re s>1/\alpha(H)$.
 \end{lemma}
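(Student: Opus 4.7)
The plan is to apply Poisson summation on $\Hom(\Idele,J)$ with respect to the closed discrete subgroup $\Hom(\Idele/k^*,J)$, following the scheme of~\cite[\S 3]{HNP} in the untwisted case $\eta=1$. Under Pontryagin duality, $\Hom(\Idele,J)$ is dual to $\Idele\otimes\dual{J}$, and the annihilator of $\Hom(\Idele/k^*,J)$ inside $\Idele\otimes\dual{J}$ is the image of $k^*\otimes\dual{J}$ (this is what one obtains by dualising $1\to k^*\to\Idele\to\Idele/k^*\to 1$ tensored with the finite abelian group $\dual J$). Note that $\chi\mapsto f_\eta(\chi)\Phi_H(\chi\eta)^{-s}$ is a well-defined function on all of $\Hom(\Idele,J)$, since the $\eta$-twist is built into the definition.

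Next I would verify the analytic prerequisites. The function $F_s(\chi):=f_\eta(\chi)\Phi_H(\chi\eta)^{-s}$ is absolutely summable over $\Hom(\Idele/k^*,J)$ for $\re s>1/\alpha(H)$; this is the same half-plane as for $F_{H,f}$, since $|f_\eta|\leq 1$ and $\Phi_H(\chi\eta)$ differs from a term depending only on $\chi$ by a bounded factor controlled by $\eta$. By Lemma~\ref{lem:relaxed_condition_fourier_transformWA}(2) the Fourier transform of $F_s$ at $x\in\Idele\otimes\dual J$ is the absolutely convergent product $h_\eta(x;s)=\prod_v h_{\eta_v,v}(x_v;s)$. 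Applying the standard adelic Poisson summation formula, exactly as in~\cite[\S 3]{HNP}, then yields~\eqref{eq:poisson_formula}; the constant $|\OO_k^*\otimes\dual{J}|^{-1}$ arises as a covolume factor in the duality, tracking the failure of the natural map $\OO_k^*\otimes\dual J\to k^*\otimes\dual J$ to be injective. The main obstacle is notational rather than conceptual: one must carefully track the Haar measure normalisations so that the constant comes out as stated, and verify that the $\eta$-twist does not disturb the underlying duality.

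Finally, the finiteness and holomorphy assertions follow from the local factors. By Lemma~\ref{lem:relaxed_condition_halfway_local_factorsWA}(1), $h_{\eta_v,v}(x_v;s)=0$ whenever $v\notin S$ and $x_v\notin\OO_v^*\otimes\dual{J}$. Consequently, any $x\in k^*\otimes\dual{J}$ contributing a nonzero term must have image in $\OO_v^*\otimes\dual{J}$ for every $v\notin S$, so $x$ lies in the image of $\OO_S^*\otimes\dual{J}$ inside $k^*\otimes\dual J$. Since $\OO_S^*$ is finitely generated and $\dual{J}$ is a finite abelian group, this image is finite. Each $h_\eta(x;\cdot)$ is holomorphic on $\re s>1/\alpha(H)$ by Lemma~\ref{lem:relaxed_condition_fourier_transformWA}(1), so the resulting finite sum on the right-hand side of~\eqref{eq:poisson_formula} is holomorphic on the same half-plane.
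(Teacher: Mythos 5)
This is essentially the paper's own argument: identify $h_\eta(x;s)$ as the Fourier transform of the summand (Lemma~\ref{lem:relaxed_condition_fourier_transformWA}), use the vanishing of the local factors for $x_v\notin\OO_v^*\otimes\dual{J}$ to see that the dual sum is supported on the finite set $\OO_S^*\otimes\dual{J}$, and then obtain \eqref{eq:poisson_formula} by the Poisson summation/interchange-of-summation argument of \cite[\S 3]{HNP} and \cite[Proposition 3.9]{HNP2}. One detail in your heuristic is off: $k^*/\OO_k^*$ is free abelian, so $\OO_k^*\otimes\dual{J}\to k^*\otimes\dual{J}$ is in fact injective, and the constant $|\OO_k^*\otimes\dual{J}|^{-1}$ is simply the covolume factor forced by the Haar-measure normalisation of \cite[\S 3.2]{HNP} rather than a measure of non-injectivity --- but this does not affect the argument.
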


 \begin{proof}

   We know from Lemma \ref{lem:relaxed_condition_fourier_transformWA}
   that $h_\eta(x;s)$ is the
   Fourier transform of the summand on the left-hand side.

   We consider the finite set of places
   $T=S\cup\{v\ :\ \eta_v|_{\OO_v^*}\neq 1\}$. For $v\notin T$ and $\chi_v\in\Hom(k_v^*,J)$, we have
   $\Phi_H(\chi_v\eta_v)^s=\Phi_H(\chi_v)^s=\Phi_{J}(\chi_v)^{Qs}$. As,
   moreover, the function $f_{\eta_v,v}(\cdot)$ on $\Hom(k_v^*,J)$
   is invariant under multiplication of its argument by elements of
   $\Hom(k_v^*/\OO_v^*,J)$, we see that
   \begin{align*}
     &h_{\eta_v,v}(x_v;s)=\\ &\frac{1}{|J|}\sum_{\chi_{v,\text{r}}\in\Hom(\OO_v^*,J)}\frac{f_{\eta_v}(\chi_{v,r})\langle\chi_{v,r},x_v\rangle}{\Phi_{J}(\chi_{v,r})^{Qs}}\sum_{\chi_{v,\text{ur}}\in\Hom(k_v^*/\OO_v^*,J)}\langle\chi_{v,\text{ur}},x_v\rangle.
   \end{align*}
   By character orthogonality, we obtain for
   $x_v\in\OO_v^*\otimes \dual{J}$ that
   \begin{equation*}
      h_{\eta_v,v}(x_v;s)=\sum_{\chi_{v,\text{r}}\in\Hom(\OO_v^*,J)}\frac{f_{\eta_v}(\chi_{v,r})\langle\chi_{v,r},x_v\rangle}{\Phi_{J}(\chi_{v,r})^{Qs}},
    \end{equation*}
    and otherwise $h_{\eta_v,v}(x_v;s)=0$. 
    In particular, the sum
    over $x$ on the right-hand side in \eqref{eq:poisson_formula} is finite.
    Now the desired result follows by exchanging the order of summation, as in
    the proof of \cite[Proposition 3.9]{HNP2}.
 \end{proof}

  Point \emph{(1)} of Lemma \ref{lem:relaxed_condition_halfway_local_factorsWA}
  shows that $h_\eta(x;s)=0$ for all $\eta$, unless $x\in\OO_S^*\otimes\dual{J}$.
  Continuing from  \eqref{eq:sorting_sumWA}, Lemma
  \ref{lem:relaxed_condition_poissonWA} thus shows that 
 \begin{equation}\label{eq:sorting_sum_after_poissonWA}
   F_{H,f}(s)=\frac{1}{|\OO_k^*\otimes\dual{J}|}\sum_{\eta\in R}\sum_{x\in \OO_S^*\otimes\dual{J}}h_{\eta}(x;s).
 \end{equation}

 Next, we analyse the individual Fourier transforms $h_\eta(x;s)$ in terms of
 various zeta functions.

  Recall that $k_0=k(\mu_Q)$ and let $k_x/k_0$ be the elementary abelian $Q$-extension associated to $x\in k^*\otimes\dual{J}$ as in~\cite[Section~4.4]{HNP}.   
 We also define $k_{x,1}/k_0$ to be the extension associated to $x\in k^*\otimes\dual{J}$ that is induced via Kummer theory by any homomorphism $\psi: \mu_Q\to J$ whose image is $\langle e_1^{Q^{a_1-1}}\rangle=V$. Namely, $\psi$ induces a map $\Psi\in\Hom(k^*\otimes\dual{J}, k_0^*/k_0^{*Q})$ as follows:
 \[\Psi: k^*\otimes\dual{J}\xrightarrow{\iota\otimes\dual{\psi}} k_0^*\otimes\dual{\mu_Q}=k_0^*/k_0^{*Q}, \]
 where $\iota$ denotes the natural inclusion $k^*\to k_0^*$.
  The image of $x\in k^*\otimes\dual{J}$ under $\Psi$ defines the extension $k_{x,1}/k_0$ by Kummer theory.
  
 The various conditions on $q_v$ and $x$ appearing in Lemma
 \ref{lem:relaxed_condition_halfway_local_factorsWA} can be interpreted in terms
 of splitting conditions in these extensions of $k$ as follows. Let $v\notin S$
 and $x\in \OO_S^*\otimes\dual{J}$. Then
\begin{align*}
   v \text{ splits completely in }k_0/k &\Longleftrightarrow q_v\equiv 1\bmod Q,\\
   v \text{ splits completely in }k_{x,1}/k &\Longleftrightarrow q_v\equiv 1\bmod
                                            Q\text{ and }
                                            x_v\in\OO_v^{*Q}\otimes\dual{V},\\
   v \text{ splits completely in }k_x /k &\Longleftrightarrow q_v\equiv 1\bmod
                                            Q\text{ and }
                                            x_v\in\OO_v^{*Q}\otimes\dual{J}.   
 \end{align*}
  Moreover, for $\eta\in\Hom(\Idele/k^*,H)$, let $k_\eta$ be the fixed field of $\im\eta\cap J$ inside the Galois extension of $k$ defined by $\eta$. In other words, $k_\eta$ is the extension corresponding to the image of $\eta$ under the map $\Hom(\Idele/k^*,H)\to \Hom(\Idele/k^*,H/J)$ induced by the natural map $H\to H/J$.
  Thus, $k_\eta/k$ is a Galois
 extension with $\Gal(k_\eta/k)=\im\eta/(\im\eta\cap J)\hookrightarrow H/J\cong \ZZ/Q\ZZ$, and any
 $v\notin S$ satisfies:
 \begin{align*}
   v \text{ is unramified in }k_\eta/k &\Longleftrightarrow
                                         \eta_v(\OO_v^*)\subseteq J,\\
   v \text{ splits completely in }k_\eta/k &\Longleftrightarrow
                                          \eta_v(k_v^*)\subseteq J.
 \end{align*}
 For a place $v\nmid\infty$ of $k$ and an extension $K\supset k$, we write
\begin{equation*}
  \spl(K,v):=
  \begin{cases}
    1&\text{ if $v$ splits completely in $K/k$,}\\
    0&\text{ otherwise.}
  \end{cases}
\end{equation*}

\begin{lemma}\label{lem:loca_factors_with_splitting_conditionsWA}
  Let $\re s\geq 0$, $v\notin S$, $x\in\OO_S^*\otimes\dual{J}$ and
  $\eta\in\Hom(\Idele/k^*,H)$ such that
  $\eta_v|_{\OO_v^*}\in\Hom(\OO_v^*,J)$. Then
  \begin{align*}
    h_{\eta_v,v}(x_v;s) = &\langle(\eta_v|_{\OO_v^*})^{-1},x_v\rangle\Big(1\\
    + &\left(Q^{\beta_H}\spl(k_x,v)-Q\spl(k_{x,1},v)+Q\spl(k_\eta
    k_{x,1},v)-\spl(k_\eta k_0,v)\right)q_v^{-\alpha(H)s}\\ + &O(q_v^{-(\alpha(H)+1)s})\Big).
  \end{align*}
\end{lemma}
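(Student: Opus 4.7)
The plan is to deduce the formula from Lemma~\ref{lem:relaxed_condition_halfway_local_factorsWA} by reorganising its parts (2), (3), (4) into a single expression, using the dictionary between splitting invariants and indicator functions recorded just before the statement. Observe first a harmless preliminary reduction: whenever $x_v\in\OO_v^*\otimes\dual{J}$, the pairing $\langle\chi_v,x_v\rangle$ depends only on $\chi_v|_{\OO_v^*}$, so $\langle\eta_v^{-1},x_v\rangle=\langle(\eta_v|_{\OO_v^*})^{-1},x_v\rangle$ in every case. One further key input is that for any finite Galois extensions $K/k$ and $L/k$, the place $v$ splits completely in the compositum $KL$ if and only if it splits completely in both $K$ and $L$; this will handle the compositum terms $k_\eta k_{x,1}$ and $k_\eta k_0$.

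I would then split the argument into three sub-cases according to whether $q_v\equiv 1\bmod Q$ and whether $\eta_v\in\Hom(k_v^*,J)$.

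\textbf{Sub-case A: $q_v\not\equiv 1\bmod Q$.} Then $\spl(k_0,v)=0$, and every field in sight containing $k_0$ (namely $k_x$, $k_{x,1}$, $k_\eta k_0$, $k_\eta k_{x,1}$) also fails to split completely at $v$. The bracket in the target formula reduces to $1+O(q_v^{-(\alpha(H)+1)s})$, exactly matching part~(3) of Lemma~\ref{lem:relaxed_condition_halfway_local_factorsWA}.

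\textbf{Sub-case B: $q_v\equiv 1\bmod Q$ and $\eta_v\notin\Hom(k_v^*,J)$.} Then $\eta_v(k_v^*)\not\subseteq J$, so $v$ does not split completely in $k_\eta/k$, whence $\spl(k_\eta k_{x,1},v)=\spl(k_\eta k_0,v)=0$. Under the splitting/indicator dictionary, the bracket becomes
\[
1+\bigl(Q^{\beta_H}\mathbbm{1}_{\OO_v^{*Q}\otimes\dual{J}}(x_v)-Q\mathbbm{1}_{\OO_v^{*Q}\otimes\dual{V}}(x_v)\bigr)q_v^{-\alpha(H)s}+O(q_v^{-(\alpha(H)+1)s}),
\]
which is exactly the expression furnished by part~(4) of Lemma~\ref{lem:relaxed_condition_halfway_local_factorsWA}.

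\textbf{Sub-case C: $\eta_v\in\Hom(k_v^*,J)$.} Here $v$ splits completely in $k_\eta/k$, so by the compositum observation $\spl(k_\eta L,v)=\spl(L,v)$ for every $L\supseteq k$ Galois. Thus $\spl(k_\eta k_0,v)=\spl(k_0,v)$ and $\spl(k_\eta k_{x,1},v)=\spl(k_{x,1},v)$, and the coefficient of $q_v^{-\alpha(H)s}$ collapses to $Q^{\beta_H}\spl(k_x,v)-\spl(k_0,v)$. This matches part~(2) of Lemma~\ref{lem:relaxed_condition_halfway_local_factorsWA} combined with the expansion~\eqref{eq:relaxed_condition_trivial_local_fourier_transformWA} of $\widehat{f}_{J,v}(x_v;Qs)$, distinguishing internally whether $q_v\equiv 1\bmod Q$ (giving $Q^{\beta_H}\mathbbm{1}_{\OO_v^{*Q}\otimes\dual{J}}(x_v)-1$) or not (giving $0$).

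The main obstacle is Sub-case C: the bookkeeping with the two compositum terms is what forces the otherwise puzzling combination $+Q\spl(k_\eta k_{x,1},v)-\spl(k_\eta k_0,v)$ in the claim, since these two terms must cancel $-Q\spl(k_{x,1},v)$ and contribute $-\spl(k_0,v)$ precisely when $v$ is totally split in $k_\eta$. Once the compositum splitting principle is invoked, the three sub-cases assemble into the single uniform formula in the statement.
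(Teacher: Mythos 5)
Your proposal is correct and follows exactly the route the paper takes: the paper's proof is a one-line citation of Lemma~\ref{lem:relaxed_condition_halfway_local_factorsWA}, the expansion~\eqref{eq:relaxed_condition_trivial_local_fourier_transformWA}, and the splitting dictionary, and your three sub-cases simply carry out that translation in detail (the case analysis is exhaustive, the overlap between your sub-cases A and C is harmless since both yield the same expression, and the observation that $\langle\eta_v^{-1},x_v\rangle=\langle(\eta_v|_{\OO_v^*})^{-1},x_v\rangle$ for $x_v\in\OO_v^*\otimes\dual{J}$ is the right reconciliation of part~(2) with the stated formula).
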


\begin{proof}
  This follows from Lemma \ref{lem:relaxed_condition_halfway_local_factorsWA}
  together with the description of $\widehat{f}_{J,v}(x_v;Qs)$ in
  \eqref{eq:relaxed_condition_trivial_local_fourier_transformWA} and the
  characterisation of the various splitting conditions immediately preceding the statement of Lemma~\ref{lem:loca_factors_with_splitting_conditionsWA}.
\end{proof}

 For any Galois extension $K/k$, let $\zeta_K(s)$ be its Dedekind zeta function. For an
archimedean place $v\in\Omega_k$, let $\zeta_{K,v}(s):=1$. For $v$
non-archimedean, we define the local factor at $v$ of $\zeta_K$ as
\begin{equation*}
  \zeta_{K,v}(s):=\prod_{\substack{w\in\Omega_K\\w\mid
      v}}\frac{1}{1-q_v^{-f(v)s}}=
    1+\#\{w\mid v\}q_v^{-f(v)s} + O_{[K:k]}\left(q_v^{-2f(v)s}\right),
  \end{equation*}
where $f(v)$ is the inertia degree of $v$ in $K$. Lemma~\ref{lem:loca_factors_with_splitting_conditionsWA} shows
that, for $\re s> 1$,
$v\notin S$, $x\in\OO_S^*\otimes\dual{J}$ and
$\eta\in\Hom(\Idele/k^*,H)$ with $\eta_v|_{\OO_v^*}\in\Hom(\OO_v^*,J)$, we
have
\begin{equation}\label{eq:hLfunction}
  h_{\eta_v,v}(x_v;s)=\langle(\eta_v|_{\OO_v^*})^{-1},x_v\rangle \left(L_v(\eta,x;\alpha(H)s) + O(q_v^{-(\alpha(H)+1)s})\right),
\end{equation}
where $L_v(\eta,x;s)$ is the local factor at $v$ of the function
\begin{equation*}
  L(\eta,x;s):=\zeta_{k_x}(s)^{\frac{Q^{\beta_H}}{[k_x:k]}}\zeta_{k_{x,1}}(s)^{\frac{-Q}{[k_{x,1}:k]}}\zeta_{k_\eta
    k_{x,1}}(s)^{\frac{Q}{[k_\eta k_{x,1}:k]}}\zeta_{k_\eta
    k_{0}}(s)^{\frac{-1}{[k_\eta k_0:k]}}.
\end{equation*}
Write
\begin{equation*}
  \nu(\eta,x):=\frac{1}{[k_{x,1}:k]}\left(\frac{Q^{\beta_H}}{[k_x:k_{x,1}]}-Q\right)
  + \frac{1}{[k_\eta k_0:k]}\left(\frac{Q}{[k_\eta k_{x,1}:k_\eta k_0]}-1\right).
\end{equation*}
As $[k_x:k_{x,1}]\mid Q^{\beta_H-1}$ and $[k_\eta k_{x,1}:k_\eta k_0]\mid Q$,
it is clear that $\nu(\eta,x)\geq 0$.
 
 \begin{lemma}\label{lem:nu_analysisWA}
  Let $\eta\in\Hom(\Idele/k^*,H)$ and $x\in\OO_S^*\otimes\dual{J}$. Then we have
  \begin{equation*}
    \nu(\eta,x)\leq \frac{Q^{\beta_H}-1}{[k_0:k]} = \nu(k,H),
  \end{equation*}
  with equality precisely when $\eta\in\Hom(\Idele/k^*,J)$ and $k_x=k_0$.
\end{lemma}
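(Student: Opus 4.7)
The plan is to rewrite $\nu(\eta,x)$ in a symmetric form and carry out a case analysis based on the two binary choices: whether $k_x=k_0$, and whether $\eta\in\Hom(\Idele/k^*,J)$ (equivalently, $k_\eta=k$).

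\textbf{Step 1 (Rewriting).} Distributing the factors in the definition gives
\[
\nu(\eta,x) = \frac{Q^{\beta_H}}{[k_x:k]} - \frac{Q}{[k_{x,1}:k]} + \frac{Q}{[k_\eta k_{x,1}:k]} - \frac{1}{[k_\eta k_0:k]}.
\]

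\textbf{Step 2 (Structural facts).} I will collect: (a) $k_0\subseteq k_{x,1}\subseteq k_x$ is a tower of elementary abelian $Q$-extensions with $[k_{x,1}:k_0]\in\{1,Q\}$ and $[k_x:k_{x,1}]\mid Q^{\beta_H-1}$; the divisibility uses that if the $V$-component of $x$ is trivial (so $k_{x,1}=k_0$), the corresponding Kummer generator of $k_x/k_0$ is also trivial, reducing the number of nontrivial generators to at most $\beta_H-1$. (b) $\Gal(k_\eta/k)\hookrightarrow H/J\cong\ZZ/Q\ZZ$, so $[k_\eta:k]\in\{1,Q\}$. (c) Since $[k_0:k]$ divides $Q-1$ and is thus coprime to $Q$, we have $k_\eta\cap k_0=k$, giving $[k_\eta k_0:k_0]=[k_\eta:k]$ and $[k_\eta k_{x,1}:k_\eta k_0]=[k_{x,1}:k_0]$.

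\textbf{Step 3 (Case analysis).} I then split into cases. If $k_x=k_0$ (hence $k_{x,1}=k_0$) and $k_\eta=k$, all four field degrees equal $[k_0:k]$ and the four terms telescope to $\nu(\eta,x)=\frac{Q^{\beta_H}-Q+Q-1}{[k_0:k]}=\nu(k,H)$, giving the equality case. If $k_x=k_0$ but $[k_\eta:k]=Q$, the first two terms sum to $\frac{Q^{\beta_H}-Q}{[k_0:k]}$ and the last two to $\frac{Q-1}{Q[k_0:k]}$; the deficit from $\nu(k,H)$ works out to $\frac{(Q-1)^2}{Q[k_0:k]}>0$. If $k_x\neq k_0$, I subdivide: when $k_{x,1}\neq k_0$, the first two terms are at most $\frac{Q^{\beta_H-1}-1}{[k_0:k]}$ (minimum of $[k_x:k]$ is $[k_{x,1}:k]=Q[k_0:k]$), and the last two are at most $\frac{Q-1}{Q[k_0:k]}$, summing to at most $\frac{Q^{\beta_H}-1}{Q[k_0:k]}<\nu(k,H)$; when $k_{x,1}=k_0$ and $k_x\neq k_0$, the constraint $[k_x:k_0]\geq Q$ from (a) bounds the first two terms by $\frac{Q^{\beta_H-1}-Q}{[k_0:k]}$, and an analogous combination yields strict inequality.

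\textbf{Main obstacle.} The most delicate point is the divisibility $[k_x:k_{x,1}]\mid Q^{\beta_H-1}$ in Step 2, which prevents a pathological case in which $k_{x,1}=k_0$ while $[k_x:k_0]=Q^{\beta_H}$ would make the first two terms negative enough to spoil the bound. Once this Kummer-theoretic rank-reduction is recorded, the remaining verifications are routine arithmetic comparisons of expressions of the form $a/[k_0:k]+b/Q[k_0:k]$ against $\nu(k,H)$.
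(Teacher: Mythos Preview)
Your approach is correct but takes a longer route than the paper's. The paper keeps $\nu(\eta,x)$ in its two-summand factored form and simply observes that each summand is non-negative (using the divisibilities $[k_x:k_{x,1}]\mid Q^{\beta_H-1}$ and $[k_\eta k_{x,1}:k_\eta k_0]\mid Q$ recorded just before the lemma); hence the sum is maximal precisely when all four occurring degrees are simultaneously minimal, i.e.\ $k_x=k_0$ and $k_\eta k_0=k_0$, after which the coprimality of $[k_\eta:k]\in\{1,Q\}$ with $[k_0:k]\mid Q-1$ forces $k_\eta=k$. No case split is needed.

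One correction to your plan: the second identity in Step~2(c), $[k_\eta k_{x,1}:k_\eta k_0]=[k_{x,1}:k_0]$, does not follow from $k_\eta\cap k_0=k$ and is in fact false when $[k_\eta:k]=Q$ and $k_\eta k_0=k_{x,1}$ (both are degree-$Q$ extensions of $k_0$, and nothing rules out their coinciding). Fortunately your Step~3 bounds do not need it: in the sub-case $k_{x,1}\neq k_0$ you can bound the last two terms directly via $[k_\eta k_{x,1}:k]\geq[k_{x,1}:k]=Q[k_0:k]$, obtaining $\frac{Q}{[k_\eta k_{x,1}:k]}-\frac{1}{[k_\eta k_0:k]}\leq\frac{1}{[k_0:k]}-\frac{1}{Q[k_0:k]}=\frac{Q-1}{Q[k_0:k]}$ when $[k_\eta:k]=Q$ and $\leq 0$ when $k_\eta=k$. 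So just drop the faulty identity. Your ``main obstacle'' is also slightly misplaced: the divisibility $[k_x:k_{x,1}]\mid Q^{\beta_H-1}$ is what makes the paper's monotonicity argument work (it is needed for non-negativity of the first summand), but your expanded case analysis never actually uses it---in the case $k_{x,1}=k_0$, $k_x\neq k_0$ you only need $[k_x:k_0]\geq Q$, which is immediate.
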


\begin{proof}
  Directly from the definition we see that $\nu(\eta,x)$ is maximal if and only
  if all the
  occurring degrees are minimal. 
  Hence the upper bound,
  which is attained precisely when $k_x=k_0$ and $k_\eta\subseteq k_0$. As
  $\Gal(k_\eta/k)=\im \eta/(\im \eta\cap J)\hookrightarrow H/J\cong\ZZ/Q\ZZ$ and $[k_0:k]\mid Q-1$, the latter
  condition holds if and only if
  $k_\eta=k$, which means that $\im\eta\subseteq J$. 
\end{proof}

For $\eta\in\Hom(\Idele/k^*,H)$, we write
\begin{equation*}
  T(\eta):=\{v\in\places\ :\ v\notin S,\ \eta_v|_{\OO_v^*}\notin\Hom(\OO_v^*,J)\}.
\end{equation*}
Note that $T(\eta)$ is the set of places $v\notin S$ where $k_\eta/k$ is ramified, in particular it is finite.

\begin{lemma}\label{lem:prelim_zeta_function_analysisWA}
  Let $\eta\in\Hom(\Idele/k^*,H)$ and $x\in\OO_S^*\otimes\dual{J}$. Then
  there is a function $g_0(\eta,x;s)$, holomorphic in an open neighbourhood of the
  half-plane $\re s\geq 1$, such that, for $\re s>1$,
  \begin{equation*}
    L(\eta,x;s) = \zeta_{k_0}(s)^{\nu(\eta,x)}\cdot g_0(\eta,x;s).
  \end{equation*}
  Moreover, for any $\epsilon>0$ the function $g_0(\eta,x;s)$ satisfies on
  $\re s\geq 1$ the bounds
  \begin{align*}
    g_0(\eta,x;s)&\ll_\epsilon(1+|\im s|)^\epsilon\prod_{v\in
                 T(\eta)}q_v^{\epsilon},\\
    g_0'(\eta,x;s)&\ll_\epsilon(1+|\im s|)^\epsilon\prod_{v\in
                 T(\eta)}q_v^{\epsilon}.
  \end{align*}
\end{lemma}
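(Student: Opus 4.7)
The plan is to factor each Dedekind zeta function in $L(\eta,x;s)$ through Hecke $L$-functions over $k_0$, identify the total power of $\zeta_{k_0}(s)$ as $\nu(\eta,x)$, and exhibit the remainder $g_0$ as a product of non-trivial Hecke $L$-functions to \emph{non-negative} rational powers. The bounds will then follow from Lemma \ref{lem:L-boundWA} applied factor by factor, avoiding any need for lower bounds on individual $L$-functions.

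First, each of $k_x$, $k_{x,1}$, $k_\eta k_{x,1}$, $k_\eta k_0$ is abelian over $k_0$: for $k_x$ and $k_{x,1}$ this is Kummer theory, while $\Gal(k_\eta/k)$ embeds in $H/J \cong \mu_Q$, and coprimality of $[k_0:k] \mid Q-1$ with $Q$ forces $k_\eta \cap k_0 = k$, so $k_\eta k_0/k_0$ and $k_\eta k_{x,1}/k_0$ are also abelian. Setting $A = \Gal(k_\eta k_x/k_0)$, substituting the decomposition $\zeta_K(s) = \prod_{\chi \in \widehat{\Gal(K/k_0)}} L(s,\chi)$ (with the trivial character contributing $\zeta_{k_0}$) into the definition of $L(\eta,x;s)$, and regrouping by characters of $A$, one obtains
\[
L(\eta,x;s) \;=\; \prod_{\chi \in \widehat{A}} L(s,\chi)^{m(\chi)},
\qquad
m(\chi) \;=\; \sum_{\substack{K \in \{k_x,\,k_{x,1},\,k_\eta k_{x,1},\,k_\eta k_0\}\\ \Gal(k_\eta k_x / K)\,\subseteq\,\ker\chi}} c_K,
\]
where $c_K$ is the corresponding exponent of $\zeta_K(s)$ in the definition of $L(\eta,x;s)$. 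Summing the four $c_K$'s gives $m(1) = \nu(\eta,x)$, accounting for the factor $\zeta_{k_0}(s)^{\nu(\eta,x)}$. The key step is a short case analysis on which of the four fields contain the fixed field of $\ker\chi$; this verifies $m(\chi) \geq 0$ for every non-trivial $\chi \in \widehat{A}$, so one sets $g_0(\eta,x;s) := \prod_{\chi \neq 1} L(s,\chi)^{m(\chi)}$.

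Each non-trivial Hecke $L$-function appearing is entire and, by the classical zero-free region (with any Siegel zero located strictly in $\re s < 1$), non-vanishing on a neighbourhood of $\re s \geq 1$; taking a common such neighbourhood and holomorphic branches for the rational exponents makes $g_0$ holomorphic there. For the bounds, one controls conductors: characters factoring through $k_x$ have $N_{k_0/\QQ}\mathfrak{f}(\chi) \ll_S 1$ since $k_x, k_{x,1}$ are unramified outside $S$; otherwise $\chi$ involves $k_\eta$, whose ramification outside $S$ lies in $T(\eta)$, giving $N_{k_0/\QQ}\mathfrak{f}(\chi) \ll_S \prod_{v \in T(\eta)} q_v^C$ for some constant $C = C(k,G,S)$. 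Lemma \ref{lem:L-boundWA} together with $m(\chi) \geq 0$ then yields $|g_0(\eta,x;s)| \leq \prod_{\chi \neq 1} |L(s,\chi)|^{m(\chi)} \ll_\epsilon (1+|\im s|)^\epsilon \prod_{v \in T(\eta)} q_v^\epsilon$ on $\re s \geq 1$; the logarithmic derivative identity $g_0'/g_0 = \sum_{\chi \neq 1} m(\chi)\, L'/L(s,\chi)$ combined with the $L'/L$-bound of Lemma \ref{lem:L-boundWA} then gives the same order for $g_0'$. The main obstacle is the case-by-case verification that $m(\chi) \geq 0$: if any $m(\chi)$ were negative, one would need polynomial lower bounds on $|L(s,\chi)|$, which Lemma \ref{lem:L-boundWA} does not provide.
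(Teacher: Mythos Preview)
Your proposal is correct and follows essentially the same strategy as the paper: factor each Dedekind zeta function through Hecke $L$-functions over $k_0$, identify the trivial-character contribution as $\zeta_{k_0}^{\nu(\eta,x)}$, show all remaining $L$-functions occur to non-negative powers, and then feed the conductor bounds into Lemma~\ref{lem:L-boundWA} (using the logarithmic-derivative identity for $g_0'$). The conductor control is identical.

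The one organizational difference is worth noting. You expand $L(\eta,x;s)$ character-by-character and assert that a case analysis on which of the four fields contain the fixed field of $\ker\chi$ gives $m(\chi)\geq 0$. This works (the inclusions $k_{x,1}\subseteq k_x$, $k_{x,1}\subseteq k_\eta k_{x,1}$, $k_\eta k_0\subseteq k_\eta k_{x,1}$ leave only a handful of admissible subsets, and in each case $m(\chi)$ is a subsum of $c_1,c_1{+}c_2,c_3,c_3{+}c_4$ or the full sum, all non-negative by the divisibility facts $[k_x{:}k_{x,1}]\mid Q^{\beta_H-1}$ and $[k_\eta k_{x,1}{:}k_\eta k_0]\mid Q$). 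The paper instead writes
\[
\frac{L(\eta,x;s)}{\zeta_{k_0}(s)^{\nu(\eta,x)}}=\Bigl(\tfrac{\zeta_{k_x}}{\zeta_{k_0}}\Bigr)^{\!\alpha_1}\Bigl(\tfrac{\zeta_{k_x}}{\zeta_{k_{x,1}}}\Bigr)^{\!\alpha_2}\Bigl(\tfrac{\zeta_{k_\eta k_{x,1}}}{\zeta_{k_0}}\Bigr)^{\!\alpha_3}\Bigl(\tfrac{\zeta_{k_\eta k_{x,1}}}{\zeta_{k_\eta k_0}}\Bigr)^{\!\alpha_4}
\]
with each $\alpha_i\geq 0$ manifest from those same divisibilities. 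Since each quotient $\zeta_F/\zeta_K$ is already a product of non-trivial $L$-functions, this grouping makes $m(\chi)\geq 0$ immediate without any case split. Your case analysis and the paper's four-factor identity are two presentations of the same computation; the paper's packaging just makes the non-negativity visible at a glance.
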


\begin{proof}
  We start by observing that, for 
  $\re s > 1$,
  \begin{equation}\label{eq:L_decompositionWA}
  \begin{aligned}
    \frac{L(\eta,x;s)}{\zeta_{k_0}(s)^{\nu(\eta,x)}}&=
                 \left(\frac{\zeta_{k_x}}{\zeta_{k_0}}(s)\right)^{\frac{1}{[k_{x,1}:k]}\left(\frac{Q^{\beta_H}}{[k_x:k_{x,1}]}-Q\right)}
                 \left(\frac{\zeta_{k_x}}{\zeta_{k_{x,1}}}(s)\right)^{\frac{Q}{[k_{x,1}:k]}}\\ &
    \left(\frac{\zeta_{k_\eta k_{x,1}}}{\zeta_{k_0}}(s)\right)^{\frac{1}{[k_\eta
        k_0:k]}\left(\frac{Q}{[k_\eta k_{x,1}:k_\eta k_0]}-1\right)}
    \left(\frac{\zeta_{k_\eta k_{x,1}}}{\zeta_{k_\eta
          k_0}}(s)\right)^{\frac{1}{[k_\eta k_0:k]}}.
  \end{aligned}
\end{equation}
  Recall that for a tower $F/K/k_0$ of number fields with
  $F/k_0$ abelian, we have
  \begin{equation*}
    \frac{\zeta_F}{\zeta_K}(s) =  \prod_{\chi}L(s;\chi),
  \end{equation*}
  where $\chi$ runs through the characters of
  $\Adele_{k_0}^*/k_0^* N_{F/k_0}\Adele_F^*$ that are non-trivial on
  $N_{K/k_0}\Adele_K^*$. In particular, each of the occurring
  $L$-functions is entire. Next, we observe that the Kummer extensions
  $k_x/k_0$ and $k_\eta k_{x,1}/k_0$ are ramified only at places of $k_0$ lying above places in
  $S\cup T(\eta)$. Outside $S$, the ramification is tame, hence the conductor
  of each of these extensions has absolute norm
  $\ll \prod_{v\in T(\eta)}q_v^{[k_0 : k]}\ll \prod_{v\in
    T(\eta)}q_v^{Q-1}$.

  Each appearing $L$-function $L(s;\chi)$ is zero-free on some open
  neighbourhood of the half-plane $\re s\geq 1$, and thus has holomorphic
  $[k_{x,1} k_\eta : k]$-th roots. Hence, the function
  \begin{equation*}
    g_0(\eta,x;s) = \frac{L(\eta,x;s)}{\zeta_{k_0}(s)^{\nu(\eta,x)}},
  \end{equation*}
  as a product of non-negative integer powers of such roots for all occurring $L(s;\chi)$, is
  holomorphic on an open neighbourhood of $\re s\geq 1$.

  Now let $\re s\geq 1$. Lemma \ref{lem:L-boundWA}, applied to all the $L(s;\chi)$, shows that
  \begin{equation*}
    g_0(\eta,x;s)\ll_\epsilon (1+|\im s|)^\epsilon\prod_{v\in T(\eta)}q_v^{\epsilon}.
  \end{equation*}
  Using Lemma \ref{lem:L-boundWA} again, the simple observation that, for
  arbitrary exponents $\beta\in\RR$,
   \begin{equation}\label{eq:derivative_boundWA}
    (L(s;\chi)^{\beta})' =
    \beta\frac{L'}{L}L^{\beta}(s;\chi),
  \end{equation}
  and the product rule, we see that also
  \begin{equation*}
    g_0'(\eta,x;s)\ll_\epsilon (1+|\im s|)^\epsilon\prod_{v\in T(\eta)}q_v^{\epsilon},
  \end{equation*}
  as desired.
\end{proof}

\begin{lemma}\label{lem:zeta_function_analysisWA}
  Let $\eta\in\Hom(\Idele/k^*,H)$ and $x\in\OO_S^*\otimes\dual{J}$. Then
  there is a function $g(\eta,x;s)$, holomorphic in
  an open neighbourhood of
  $\re s\geq 1/\alpha(H)$, such that, for $\re s>1/\alpha(H)$,
  \begin{equation*}
    h_\eta(x;s) = \zeta_{k_0}(\alpha(H)s)^{\nu(\eta,x)}\cdot g(\eta,x;s).
  \end{equation*}
  Moreover, for any $\epsilon>0$, the function $g(\eta,x;s)$ 
  satisfies on $\re s\geq 1/\alpha(H)$ the bounds 
  \begin{align}
    g(\eta,x;s)&\ll_\epsilon(1+|\im s|)^\epsilon\prod_{v\in
                 T(\eta)}q_v^{-(\alpha(H)+1)s+\epsilon}\label{eq:g_prod_boundWA}\\
    g'(\eta,x;s)&\ll_\epsilon(1+|\im s|)^\epsilon\prod_{v\in
                 T(\eta)}q_v^{-(\alpha(H)+1)s+\epsilon}.\label{eq:gprime_prod_boundWA}
  \end{align}
\end{lemma}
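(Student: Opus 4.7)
The plan is to combine the preceding lemmas: Lemma~\ref{lem:relaxed_condition_halfway_local_factorsWA}(5) for the local factors at $v \in T(\eta)$, the identity \eqref{eq:hLfunction} for the local factors at the remaining places $v \notin S \cup T(\eta)$, and the $L$-function factorisation of Lemma~\ref{lem:prelim_zeta_function_analysisWA}. I would first split the product $h_\eta(x;s) = \prod_v h_{\eta_v,v}(x_v;s)$ into the contributions from the finite ``bad'' set $S \cup T(\eta)$ and from the good places. At good places, \eqref{eq:hLfunction} gives
\begin{equation*}
h_{\eta_v,v}(x_v;s) = \langle(\eta_v|_{\OO_v^*})^{-1},x_v\rangle \bigl(L_v(\eta,x;\alpha(H)s) + O(q_v^{-(\alpha(H)+1)s})\bigr).
\end{equation*}
Since $\eta$ is continuous on $\Idele$, the character $\langle(\eta_v|_{\OO_v^*})^{-1},x_v\rangle$ is trivial outside a finite set of places, so enlarging the bad set by these finitely many additional places (and absorbing the resulting unit-modulus factor into $g$), I would extract the factorisation
\begin{equation*}
h_\eta(x;s) = \biggl(\prod_{v \text{ bad}} \frac{h_{\eta_v,v}(x_v;s)}{L_v(\eta,x;\alpha(H)s)}\biggr) \cdot L(\eta,x;\alpha(H)s) \cdot E(\eta,x;s),
\end{equation*}
where $E(\eta,x;s) = \prod_{v \text{ good}}(1 + O(q_v^{-(\alpha(H)+1)s}))$ converges absolutely on the strictly larger open half-plane $\re s > 1/(\alpha(H)+1)$.

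Inserting Lemma~\ref{lem:prelim_zeta_function_analysisWA} to write $L(\eta,x;\alpha(H)s) = \zeta_{k_0}(\alpha(H)s)^{\nu(\eta,x)}\, g_0(\eta,x;\alpha(H)s)$ now defines the desired function $g(\eta,x;s)$ as the product of $g_0(\eta,x;\alpha(H)s)$ with the other factors above. Holomorphicity in an open neighbourhood of $\re s \geq 1/\alpha(H)$ is immediate: $g_0(\eta,x;\alpha(H)s)$ inherits this property from Lemma~\ref{lem:prelim_zeta_function_analysisWA}, the finitely many local factors at the bad places are entire with non-vanishing $L_v$ for $\re s > 0$, and $E$ is holomorphic on the strictly larger half-plane where its defining Euler product converges.

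For the bounds \eqref{eq:g_prod_boundWA} and \eqref{eq:gprime_prod_boundWA}, I would combine the $g_0$ and $g_0'$ estimates from Lemma~\ref{lem:prelim_zeta_function_analysisWA} with the bound $h_{\eta_v,v}(x_v;s) \ll q_v^{-(\alpha(H)+1)\re s}$ from case~(5) of Lemma~\ref{lem:relaxed_condition_halfway_local_factorsWA}, noting that $L_v(\eta,x;\alpha(H)s) \asymp 1$ uniformly for $\re s \geq 1/\alpha(H)$. Differentiation of the bad-place local factors produces $\log q_v$ terms (from the explicit summands $q_v^{-|H|(1-1/d)s}$ and from logarithmic derivatives of $L_v$); these are absorbed into $q_v^\epsilon$. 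The derivative of $E$ is handled via the logarithmic derivative $E'/E = \sum_v \partial_s \log(\text{local factor})$, which converges absolutely and uniformly on $\re s > 1/(\alpha(H)+1)$. The main obstacle will be careful bookkeeping to ensure uniformity of the implied constants in $\eta$ and $x$: in particular, verifying that the additional finite set of places at which $\langle(\eta_v|_{\OO_v^*})^{-1},x_v\rangle \neq 1$ contributes an $O(1)$ factor rather than one growing with its cardinality, and translating cleanly between the estimates of Lemma~\ref{lem:prelim_zeta_function_analysisWA} on $\re s \geq 1$ and the target region $\re s \geq 1/\alpha(H)$ after the substitution $s \mapsto \alpha(H)s$.
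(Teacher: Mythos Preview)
Your proposal is correct and follows essentially the same route as the paper: split the Euler product into the contributions from $S$, from $T(\eta)$, and from the remaining places; use \eqref{eq:hLfunction} at the latter to factor out $L(\eta,x;\alpha(H)s)$ times an absolutely convergent Euler product on $\re s>1/(\alpha(H)+1)$; then insert Lemma~\ref{lem:prelim_zeta_function_analysisWA}. The paper handles the character factors $\langle(\eta_v|_{\OO_v^*})^{-1},x_v\rangle$ by simply pulling them out as a separate unit-modulus product rather than enlarging the bad set, which sidesteps your bookkeeping worry at once. The only substantive difference is the derivative bound: instead of differentiating the Euler product and bad-place factors explicitly, the paper observes that the auxiliary factor $g_1(\eta,x;s)$ (your product of bad-place ratios times $E$) is holomorphic on the strictly larger half-plane $\re s>1/(\alpha(H)+1)$ and already satisfies the required size bound there, so Cauchy's integral formula on a disc of fixed radius immediately gives the same bound for $g_1'$. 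This is cleaner than tracking logarithmic derivatives term by term, though your approach also works.
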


\begin{proof}
  From Lemma \ref{lem:relaxed_condition_halfway_local_factorsWA} and
  \eqref{eq:hLfunction},
  we see that $h_\eta(x;s)$
  takes the form
  \begin{align*}
     &\prod_{v\in S}h_{\eta_v,v}(x_v;s)\prod_{v\in
    T(\eta)}O(q_v^{-(\alpha(H)+1)s})\prod_{\substack{v\notin S\cup
    T(\eta)\\\eta_v|_{\OO_v^*}\neq
    1}}\langle(\eta_v|_{\OO_v^*})^{-1},x_v\rangle\\ &\prod_{v\notin S\cup T(\eta)}(L_v(\eta,x;\alpha(H)s)+O(q_v^{-(\alpha(H)+1)s})).
  \end{align*}
  If $\re s>1/(\alpha(H)+1)$,  then $h_{\eta_v,v}(x_v;s)\ll 1$ and
  $1\ll \zeta_{K/k_0,v}(\alpha(H)s)\ll 1$ for all $v\in\Omega_k$ and all Galois
  extensions $K/k_0$ of degree $[K:k_0]\leq Q^{\beta_H+1}$, which covers all
  extensions occurring in the definition of $L(\eta,x;s)$. Therefore, there is a
  function $g_1(\eta,x;s)$, holomorphic on $\re(s)>1/(\alpha(H)+1)$ and
  satisfying
  \begin{equation}\label{eq:g1_prod_boundWA}
    g_1(\eta,x;s) \ll_\epsilon \prod_{v\in T(\eta)}q_v^{-(\alpha(H)+1)s+\epsilon}
  \end{equation}
  on $\re(s)\geq 1/(\alpha(H)+1)+\epsilon$ for any small $\epsilon>0$, such
  that, for $\re(s)>1/\alpha(H)$,
  \begin{equation*}
    h_\eta(x;s) = L(\eta,x;\alpha(H)s)\cdot g_1(\eta,x;s) = \zeta_{k_0}(\alpha(H)s)^{\nu(\eta,x)}g_0(\eta,x;\alpha(H)s)g_1(\eta,x;s),
  \end{equation*}
  where $g_0(\eta,x;s)$ is as in Lemma
  \ref{lem:prelim_zeta_function_analysisWA}.
  Hence, the function
  \begin{equation*}
    g(\eta,x;s):=g_0(\eta,x;\alpha(H)s)g_1(\eta,x;s)
  \end{equation*}
  is holomorphic in an open neighbourhood of $\re s\geq 1/\alpha(H)$ and satisfies
  \eqref{eq:g_prod_boundWA}.
  Next, we use Cauchy's integral formula to bound
  $g_1'(\eta,x;s)$ in terms of our bound \eqref{eq:g1_prod_boundWA} for $g_1(\eta,x;s)$. Together with the
  bounds for $g_0$ and $g_0'$ from Lemma
  \ref{lem:prelim_zeta_function_analysisWA} and the product rule, this shows
  \eqref{eq:gprime_prod_boundWA}.
\end{proof}

\begin{proof}[Completion of the proof of Proposition \ref{prop:cancellationWA}]
  We start with the expression \eqref{eq:sorting_sum_after_poissonWA} for
  $F_{H,f}(s)$. Recall that our representative in $R$ for the trivial class
  $\Hom(\Idele/k^*,J)$ is $\eta=1$.

  Consider the finite set
  \begin{equation*}
    M:=\{\nu(\eta,x)\ :\ \eta\in\Hom(\Idele/k^*,H),\ x\in\OO_S^*\otimes\dual{J}\}.
  \end{equation*}
  We sort the values of $\eta$ and $x$ in \eqref{eq:sorting_sum_after_poissonWA}
  according to the value of $\nu(\eta,x)$ and obtain from Lemma
  \ref{lem:zeta_function_analysisWA}, for $\re s>1/\alpha(H)$, that
  \begin{equation}\label{eq:f_H_f_different_zetaWA}
    F_{H,f}(s) = \sum_{\nu\in M}\zeta_{k_0}(\alpha(H)s)^{\nu}g_\nu(s),
  \end{equation}
  where
  \begin{equation*}
    g_\nu(s) := \frac{1}{|\OO_k^*\otimes\dual{J}|}\sum_{\substack{x\in\OO_S^*\otimes\dual{J}}}\sum_{\substack{\eta\in R\\\nu(\eta,x)=\nu}}g(\eta,x;s).
  \end{equation*}
  Let us fix $\nu$ and study the function $g_\nu(s)$.  The sum over $x$ is
  finite, 
  so let us focus on the sum over $\eta$.

  Note that the map induced by $H\to H/J$ embeds $R$ into
  $\Hom(\Idele/k^*,H/J)$. Denote the image of $\eta$ under this map by $\bar{\eta}$ and recall that $k_\eta/k$ is the extension corresponding to $\bar{\eta}$.
  The places in $T(\eta)$ are exactly those places
  not in $S$ where $k_\eta/k$ is ramified. 
  Therefore, $\prod_{v\in T(\eta)}q_v\asymp\Phi(\bar{\eta})$, the
  conductor of $\bar{\eta}$. We obtain, for
  $\re(s)\geq 1/\alpha(H)$, that
  \begin{equation*}
    \prod_{v\in T(\eta)}q_v^{-(\alpha(H)+1)\re s+\epsilon}\ll \Phi(\bar{\eta})^{-(1+1/\alpha(H))+\epsilon}.
  \end{equation*}
  Recall that $H/J\simeq \mu_Q$ by Lemma~\ref{lem:pairH}\eqref{item:quotients}. For sufficiently small $\epsilon$, the sum
  \begin{equation*}
    \sum_{\psi\in\Hom(\Idele/k^*,\mu_Q)}\Phi(\psi)^{-(1+1/\alpha(H))+\epsilon}
  \end{equation*}
  converges, as it is equal to the value at $s=1+1/\alpha(H)-\epsilon$ of the Dirichlet
  series
  \begin{equation*}
    \sum_{\psi\in\Hom(\Idele/k^*,\mu_Q)}\frac{1}{\Phi(\psi)^s},
  \end{equation*}
  which converges absolutely for $\re s> 1$ by \cite[Lemma
  2.10]{Woo10}. By the dominated convergence theorem, the function $g_\nu(s)$
  and its derivative extend continuously
  to $\re s\geq 1/\alpha(H)$. Now we take
  \begin{equation}\label{eq:g1_def}
    g_1(s) := \sum_{\nu\in M}\zeta_{k_0}(\alpha(H)s)^{\nu-\nu(k,H)}g_\nu(s),
  \end{equation}
  so \eqref{eq:f_H_f_zetaWA} follows from \eqref{eq:f_H_f_different_zetaWA}. As
  $\zeta_{k_0}^{-1}(\alpha(H)s)$ has no poles and only the zero at
  $s=1/\alpha(H)$ in a neighbourhood of $\re s\geq 1/\alpha(H)$, the negative
  powers of $\zeta_{k_0}(\alpha(H)s)$ in \eqref{eq:g1_def} extend continuously
  to $\re s\geq 1/\alpha(H)$. Hence, the same holds for $g_1(s)$.

  Let $\delta_0:=\nu(k,H)-\max(M\smallsetminus\{\nu(k,H)\})>0$ be the difference
  between $\nu(k,H)$ and the second biggest element of $M$. We set
  $a:=1/\alpha(H)$ and show that in a small
  compact convex neighbourhood $C$ of $a$ in $\re s\geq a$, each summand of $g_1(s)$ satisfies
  \begin{equation}\label{eq:summand_hoelder_boundWA}
    |\zeta_{k_0}(\alpha(H)s)^{\nu-\nu(k,H)}g_\nu(s)-\zeta_{k_0}(1)^{\nu-\nu(k,H)}g_\nu(a)|\ll
    _C |s-a|^{\min\{1,\delta_0\}}.
  \end{equation}
  Then clearly the same will follow for $g_1(s)$, as desired. Let $s\in C$. In case
  $\nu=\nu(k,H)$, we consider the function $g_0(t):=g_{\nu(k,H)}(a+t(s-a))$
  on $[0,1]$. This function is differentiable on $(0,1)$ with derivative
  \begin{equation*}
    g_0'(t)=(s-a)g_{\nu(k,H)}'(a+t(s-a))\ll_C |s-a|.
  \end{equation*}
  Note that to obtain this when $\re s=a$, we have used
  Lemma \ref{lem:elementary_real_analysisWA}. By the mean value theorem,
  \begin{equation*}
    |g_{\nu(k,H)}(s)-g_{\nu(k,H)}(a)| = |g_0(1)-g_0(0)|\leq g_0'(\xi)\ll_C |s-a|,
  \end{equation*}
  where $\xi\in (0,1)$. This shows \eqref{eq:summand_hoelder_boundWA} in the case
  $\nu=\nu(k,H)$. When $\nu<\nu(k,H)$, then $\zeta_{k_0}(1)^{\nu-\nu(k,H)}=0$
  and
  \begin{equation*}
    |\zeta_{k_0}(\alpha(H)s)^{\nu-\nu(k,H)}g_\nu(s)|\ll_C |\zeta_{k_0}(\alpha(H)s)^{-1}|^{\min\{1,\delta_0\}},
  \end{equation*}
  if $C$ is sufficiently small so that $|\zeta_{k_0}(\alpha(H)s)^{-1}|\leq 1$
  in $C$. Hence, \eqref{eq:summand_hoelder_boundWA} follows from the bound
  \begin{equation*}
    |\zeta_{k_0}(\alpha(H)s)^{-1}| \ll_C |s-a|,
  \end{equation*}
  coming again from the mean value theorem and the fact that
  $\zeta_{k_0}(\alpha(H)s)^{-1}$ is holomorphic in a neighbourhood of $\re s\geq a$.

 Next, we isolate the summands for the representative $\eta=1$ of the trivial class,
  \begin{equation*}
    g_2(s) := \frac{1}{|\OO_k^*\otimes\dual{J}|}\sum_{x\in\OO_S^*\otimes\dual{J}}\zeta_{k_0}(\alpha(H)s)^{\nu(1,x)-\nu(k,H)}g(1,x;s).
  \end{equation*}
  This sum is finite, and Lemma \ref{lem:relaxed_condition_poissonWA} shows
  that
  for $\re s>1/\alpha(H)$,
  \begin{align*}
    \zeta_{k_0}(\alpha(H)s)^{\nu(k,H)}\cdot g_2(s) &=
    \frac{1}{|\OO_k^*\otimes\dual{J}|}\sum_{x\in
                                                     \OO_S^*\otimes\dual{J}}h_{1}(x;s)\\
                                                   &=\sum_{\chi\in\Hom(\Idele/k^*,J)}\frac{f(\chi)}{\Phi_{H}(\chi)^s}
                                                     = F_{J,f}(Qs),
  \end{align*}
  as desired.
   Finally, we observe that
  \begin{align*}
    g_1(1/\alpha(H)) &= g_2(1/\alpha(H))\\ &+
    \frac{1}{|\OO_k^*\otimes\dual{J}|}\sum_{\substack{\eta\in R\\\eta\neq
        1}}\sum_{x\in\OO_S^*\otimes\dual{J}}\zeta_{k_0}(1)^{\nu(\eta,x)-\nu(k,H)}g(\eta,x;1/\alpha(H))\\ &=
    g_2(1/\alpha(H)),
  \end{align*}
  by Lemma \ref{lem:nu_analysisWA}. We still need to argue that
  $g_2(1/\alpha(H))\neq 0$. For this, note that $f=1$ on all of
  $\Hom(\Idele/k^*,J)$, as $J\subset L$, and moreover that
  $\beta_J=\beta_G=\beta_H$. From \cite[Proposition 5.5]{Wri89} or \cite[Lemma
  4.7]{HNP}, we know that
  \begin{equation*}
     F_{J,f}(Qs) = \sum_{\chi\in\Hom(\Adele^*/k^*,J)}\frac{1}{\Phi_J(\chi)^{Qs}}
   \end{equation*}
   has a pole of order at most $\nu(k,J)=\nu(k,H)$ at $s=1/(Q\alpha(J)) =
   1/\alpha(H)$. On the other hand,
   \begin{equation*}
     \{\chi\in\Hom(\Adele^*/k^*,J)\ :\ \Phi_J(\chi)\leq B^{1/Q}\} \supseteq
     \{\chi\in J\text{-ext}(k)\ :\ \ \Delta(\chi)\leq B^{1/Q}\},
   \end{equation*}
   and the counting function of the latter set grows with order
   $B^{1/\alpha(H)}(\log B)^{\nu(k,J)-1}$ by \cite[Theorem I.2]{Wri89}. Hence, the pole can not be of order
   smaller than $\nu(k,J)$.
 \end{proof}
 
 What remains is to deduce the desired cancellation
 \eqref{eq:cancellation_goal} from Proposition~\ref{prop:cancellationWA}.

 \begin{lemma}\label{lem:tauberian_applicationWA}
  We have
  \begin{equation*}
    N(H,L;B^{|H|/|G|}) = N(J;B^{|J|/|G|)}) +
    o(B^{1/\alpha(G)}(\log B)^{\nu(k,G)-1}).
  \end{equation*}
\end{lemma}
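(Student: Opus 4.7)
The plan is to apply Delange's Tauberian theorem (Theorem~\ref{thm:delangeWA}) separately to the two Dirichlet series $F_{H,f}(s)$ and $F_{J,f}(Qs)$, both regarded as series in $s$, and to read off asymptotics for $N(H,L; B^{|H|/|G|})$ and $N(J; B^{|J|/|G|})$ respectively. Proposition~\ref{prop:cancellationWA} will supply equal leading constants for the two asymptotics, and subtracting them will yield the desired $o$-bound.

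Set $a := 1/\alpha(H)$, $\omega := \nu(k,H)$, and $\mathfrak{g}_i(s) := \bigl((s-a)\zeta_{k_0}(\alpha(H)s)\bigr)^{\omega}\, g_i(s)$ for $i=1,2$, so that $\mathfrak{g}_1(s) = (s-a)^{\omega}F_{H,f}(s)$ and $\mathfrak{g}_2(s) = (s-a)^{\omega}F_{J,f}(Qs)$ on $\re s > a$. To invoke Theorem~\ref{thm:delangeWA} I must check: (i) non-negative coefficients and convergence for $\re s > a$; (ii) continuous extension of $\mathfrak{g}_i$ to $\re s \geq a$ with $\mathfrak{g}_i(a) > 0$; (iii) a H\"{o}lder bound $\mathfrak{g}_i(s) = \mathfrak{g}_i(a) + O(|s-a|^{\delta})$ as $s \to a$. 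Point (i) is immediate (note $\alpha(H) = Q\alpha(J)$, so $F_{J,f}(Qs)$ does converge for $\re s > a$). For (ii), $(s-a)\zeta_{k_0}(\alpha(H)s)$ is holomorphic at $s = a$ and extends continuously to $\re s \geq a$ by the classical non-vanishing of $\zeta_{k_0}$ on the line $\re s = 1$ away from $s=1$, taking a positive value at $s = a$; combining with the continuous extension and $g_1(a) = g_2(a) \neq 0$ from Proposition~\ref{prop:cancellationWA} gives continuity of $\mathfrak{g}_i$ on $\re s \geq a$ and $\mathfrak{g}_i(a) \neq 0$, with positivity following from the fact that $\mathfrak{g}_i(a)$ is the real limit of a non-negative Dirichlet series times $(s-a)^\omega$. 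For (iii), the holomorphic factor is Lipschitz near $a$ and the H\"{o}lder bound on $g_i$ is given by Proposition~\ref{prop:cancellationWA}. The derivative hypothesis in Theorem~\ref{thm:delangeWA} does not apply: since the $Q$-Sylow subgroup of $G$ is non-cyclic, $\beta_H = \beta_G \geq 2$, hence $\omega = (Q^{\beta_H}-1)/[k_0:k] \geq (Q^2-1)/(Q-1) \geq 3 > 1$.

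With the hypotheses verified, Theorem~\ref{thm:delangeWA} yields
\begin{align*}
N(H,L; B^{|H|/|G|}) &\sim \frac{\mathfrak{g}_1(a)}{a\Gamma(\omega)}\bigl(B^{|H|/|G|}\bigr)^{a}(\log B^{|H|/|G|})^{\omega-1}, \\
\sum_{m \leq B^{|H|/|G|}} c_m &\sim \frac{\mathfrak{g}_2(a)}{a\Gamma(\omega)}\bigl(B^{|H|/|G|}\bigr)^{a}(\log B^{|H|/|G|})^{\omega-1},
\end{align*}
where $c_m$ denotes the $m$-th coefficient of $F_{J,f}(Qs) = \sum_m c_m/m^s$, supported on perfect $Q$-th powers $m = n^Q$ with value equal to the $n$-th coefficient of $F_{J,f}$. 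Reparametrising $m = n^Q$ identifies the second sum with $\sum_{n \leq B^{|J|/|G|}} b_n = N(J,L; B^{|J|/|G|}) = N(J; B^{|J|/|G|})$, the last equality because $f \equiv 1$ on $\Hom(\Idele/k^*,J)$ (as $J \subseteq L$). Since $\mathfrak{g}_1(a) = \mathfrak{g}_2(a)$, $(B^{|H|/|G|})^{a} = B^{1/\alpha(G)}$, and $\omega = \nu(k,H) = \nu(k,G)$, both $N(H,L; B^{|H|/|G|})$ and $N(J; B^{|J|/|G|})$ are asymptotic to the same constant multiple of $B^{1/\alpha(G)}(\log B)^{\nu(k,G)-1}$, so their difference is $o\bigl(B^{1/\alpha(G)}(\log B)^{\nu(k,G)-1}\bigr)$, as required.

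The main subtlety will be step (ii): checking that $\mathfrak{g}_i$ admits a genuinely \emph{continuous} extension to the full vertical line $\re s = a$, rather than merely to a neighbourhood of the point $s = a$. This reduces to the standard fact that $(s-1)\zeta_{k_0}(s)$ extends continuously to $\re s \geq 1$, which rests on the classical Hadamard--de~la~Vall\'{e}e~Poussin zero-free region; everything else follows routinely from Proposition~\ref{prop:cancellationWA}.
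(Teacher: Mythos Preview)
Your proof is correct and follows essentially the same approach as the paper: define $\mathfrak{g}_i(s) = \bigl((s-a)\zeta_{k_0}(\alpha(H)s)\bigr)^{\omega} g_i(s)$, verify the hypotheses of Theorem~\ref{thm:delangeWA} using Proposition~\ref{prop:cancellationWA}, and read off matching asymptotics from $g_1(a)=g_2(a)$. Your treatment is in fact slightly more explicit than the paper's in a few harmless places---you spell out positivity of $\mathfrak{g}_i(a)$ via the real limit of a non-negative series, you note $\omega\geq Q+1\geq 3$ (the paper simply records $\omega\in\ZZ_{\geq 1}$, which already suffices since the derivative hypothesis is only needed for $0<\omega<1$), and you carry out the reparametrisation $m=n^Q$ for $F_{J,f}(Qs)$ explicitly---but these are cosmetic differences, not a different argument.
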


\begin{proof}
We apply Delange's Tauberian theorem
  (Theorem \ref{thm:delangeWA}) to the results of Proposition
  \ref{prop:cancellationWA}.

  We take $a:=1/\alpha(H)$ and $\omega:=\nu(k,H)\in\ZZ_{\geq 1}$. Moreover, we write
  $u(s):=((s-a)\zeta_{k_0}(\alpha(H)s))^\omega$. As $\zeta_{k_0}(\alpha(H)s)$
  has a pole of order $1$ at $s=a$ and no other poles or zeros in an open
  neighbourhood of $\re(s)\geq a$, $u(s)$ extends to a holomorphic function
  on this neighbourhood with $u(a)\neq 0$. For $\mathfrak{f}(s)=F_{H,f}(s)$, we obtain
  \begin{equation*}
    \mathfrak{g}(s)=(s-a)^\omega \mathfrak{f}(s) = u(s)g_1(s), 
  \end{equation*}
  which is holomorphic on $\re s>a$ and extends continuously to
  $\re s\geq a$. We have
  \begin{equation*}
    \mathfrak{g}(a)=u(a) g_1(a)\neq 0.
  \end{equation*}
  Moreover, fix a sufficiently small compact convex neighbourhood $C$ of $a$ in
  $\re s\geq a$. For any $s\in C$, we get
  \begin{align*}
    |\mathfrak{g}(s)-\mathfrak{g}(a)| &\leq |u(s)|\cdot|g_1(s)-g_1(a)| +|g_1(a)|\cdot|u(s)-u(a)|\\ &\ll_C |s-a|^{\delta}. 
  \end{align*}
  We have verified the requirements of Theorem
  \ref{thm:delangeWA}, and hence
  \begin{equation*}
    N(H,L;B^{|H|/|G|})=\sum_{\Phi_H(\chi)\leq B^{|H|/|G|}}f(\chi)\sim
    \frac{\mathfrak{g}(a)}{a\Gamma(\omega)}\left(\frac{|H|}{|G|}\right)^{\omega-1}B^{1/\alpha(G)}(\log B)^{\omega-1}.
  \end{equation*}
  Noting that $f(\chi)=1$ for all $\chi: \Idele /k^*\to J$, an analogous argument with $\mathfrak{f}(s)=F_{J,f}(Qs)$ shows that
   \begin{equation*}
    N(J;B^{|J|/|G|})=\sum_{\Phi_{J}(\chi)^Q\leq B^{|H|/|G|}}f(\chi)\sim
    \frac{\mathfrak{g}(a)}{a\Gamma(\omega)}\left(\frac{|H|}{|G|}\right)^{\omega-1}B^{1/\alpha(G)}(\log B)^{\omega-1},
  \end{equation*}
  indeed with the same value of $\mathfrak{g}(a)$, as $g_1(1/\alpha(H))=g_2(1/\alpha(H))$
  by Proposition~\ref{prop:cancellationWA}.
\end{proof}

Plugging the results of Lemma~\ref{lem:tauberian_applicationWA} into~\eqref{eq:moebius_inversion_a_greater_1WA} shows~\eqref{eq:goalWA} in the case $a_t\geq 2$, thereby completing the proof of Theorem~\ref{thm:countWA}.

\begin{ack}
Frei was supported by EPSRC Grant EP/T01170X/1 and EP/T01170X/2. Loughran was supported by UKRI Future Leaders Fellowship MR/V021362/1. Newton was supported by EPSRC Grant EP/S004696/1 and EP/S004696/2, and UKRI Future Leaders Fellowship MR/T041609/1 and MR/T041609/2
\end{ack}

\end{document}